\newcommand{\MAYBE}[1]{\todo[color=orange!60]{#1}}  
\newcommand{\SELF}[1]{\todo[color=green!40]{#1}} 
\newcommand{\OMIT}[1]{\todo[color=gray!30]{#1}}  
\newcommand{\CITE}[1]{\todo[color=cyan!30]{#1}}  
\newcommand{\SELFL}[1]{\reversemarginpar\todo[color=green!50]{#1}} 
\newcommand{\OMITL}[1]{\reversemarginpar\todo[color=gray!40]{#1}}  
\newcommand{\CITEL}[1]{\reversemarginpar\todo[color=green!20]{#1}}  
\newcommand{\SELFR}[1]{\normalmarginpar\todo[color=green!40]{#1}} 
\newcommand{\OMITR}[1]{\normalmarginpar\todo[color=gray!40]{#1}}  
\newcommand{\CITER}[1]{\normalmarginpar\todo[color=green!20]{#1}}
\renewcommand{\arraystretch}{2}	
\newtheorem{theorem}{Theorem}[section]
\newtheorem{proposition}[theorem]{Proposition}
\newtheorem{corollary}[theorem]{Corollary}
\theoremstyle{definition}
\newtheorem{definition}[theorem]{Definition} 
\newtheorem{example}[theorem]{Example}
\newtheoremstyle{named}{}{}{\itshape}{}{\bfseries}{.}{.5em}{\thmnote{#3}}
\theoremstyle{named}
\def\N			{\mathds{N}}
\def\Z			{\mathds{Z}}
\def\R			{\mathds{R}}
\def\C			{\mathds{C}}
\def\Id			{\mathds{1}}
\def\VV			{\mathcal{V}}
\def\II			{\mathcal{I}}		
\def\MM			{\mathcal{M}}		
\def\ii			{\mathbf{i}}				
\def\jj			{\mathbf{j}}				
\def\kk			{\mathbf{k}}				
\def\ll			{\mathbf{l}}				
\def\mm			{\mathbf{m}}
\def\rr			{\mathbf{r}}				
\def\ss			{\mathbf{s}}				
\def\tt			{\mathbf{t}}
\def\meet		{\texttt{meet}}
\def\join		{\texttt{join}}
\def\wrt					{w.r.t.\ }
\def\ie						{i.e.\ }
\def\eg						{e.g.\ }
\def\resp					{resp.\ }
\def\im	 					{\mathrm{i}}				
\newcommand\inner[1] 		{\langle #1 \rangle}		
\newcommand\pairs[2] 		{|#1>#2|}					
\newcommand\ord[1]			{\overline{#1}} 
\def\lcontr					{\mathbin{\lrcorner}}		
\def\rcontr					{\mathbin{\llcorner}}		
\def\glcontr				{\rfloor}					
\def\grcontr				{\lfloor}					
\def\llcontr {\mathbin{\raisebox{\depth}{\scalebox{1}[-1]{$\lnot$}}}}  
\def\PP						{\mathcal{P}}				
\newcommand{\lH}[2][\star]	{\tensor[^{#1}]{{#2}}{}}	
\newcommand{\rH}[2][\star]	{{#2}^{#1}{}}				
\newcommand{\lHB}[1]		{\lH[\star_B]{#1}}			
\newcommand{\rHB}[1]		{\rH[\star_B]{#1}}			
\newcommand\hhat[2]			{#1\string^ {}^{#2}} 
\newcommand{\grade}[1]		{|#1|}						
\newcommand\comp[2] 		{(#1)_{#2}}		
\newcommand{\adj}[1]		{#1^\dagger}				
\newcommand{\vf}[1]			{\nu(#1)}					
\def\pperp					{\simperp}					
\def\ext					{e}						
\DeclareMathOperator{\Span}{span}
\DeclareMathOperator{\Img}{Im}
\begin{document}

\title{Multivector Contractions Revisited, Part I}

\author{Andr\'e L. G. Mandolesi 
               \thanks{Instituto de Matemática e Estatística, Universidade Federal da Bahia, Av. Adhemar de Barros s/n, 40170-110, Salvador - BA, Brazil. E-mail: \texttt{andre.mandolesi@ufba.br}}}
               
\date{\today \SELF{v3.1 reorganizado}}

\maketitle

\abstract{
We reorganize, simplify and expand the theory of contractions or interior products of multivectors, and related topics like Hodge star duality. 
Many results are generalized and new ones are given, like: 
geometric characterizations of blade contractions and regressive products,
higher-order graded Leibniz rules,
determinant formulas,
improved complex star operators, etc.
Different contractions and conventions found in the literature are discussed and compared, in special those of Clifford Geometric Algebra.
Applications of the theory are developed in a follow-up paper.

\vspace{.5em}
\noindent
{\bf Keywords:} Grassmann exterior algebra, Clifford geometric algebra, contraction, interior product, inner derivative, insertion operator, Hodge star

\vspace{3pt}

\noindent
{\bf MSC:} 	15A66,  
			15A75	
}

\section{Introduction}

Contractions, interior products or inner derivatives of multivectors or forms date back, in essence, to Grassmann \cite{Grassmann1995}, 
and have since been used in Differential Geometry \cite{Abraham1988,Gallier2020}, Physics \cite{Hestenes2005,Oziewicz1986}, Computer Science \cite{BayroCorrochano2018,Dorst2007}, etc.
Still, they are often seen as somewhat obscure operations.

A difficulty is the various kinds of contraction (left, right, for vectors, multivectors, forms, tensors), some very abstract \cite{Bourbaki1998}.
Contraction by vectors is prevalent, even when multivectors could be of great use, as in \cite{Winitzki2010}.
Contraction between multivectors is simpler than with forms, but needs an inner product.
Hestenes inner product, of Clifford Geometric Algebra (GA) \cite{Dorst2007,Hestenes1984}, is a symmetrized contraction with worse properties.

Also, contractions are often presented in ways that obfuscate their simple nature as adjoints of exterior products.
For example, geometers view the contraction or insertion of vector fields on differential forms as an antiderivation linked to exterior and Lie derivatives (\cite[p.429]{Abraham1988}, \cite[p.207]{ChoquetBruhat1991}, \cite[p.35]{Kobayashi1996a}), 
	\CITE{Lang, Diff. Riem. Mflds. p.137}
making it seem more complicated than necessary.

Different conventions are another source of confusion: notations vary, and non\-e\-quiv\-a\-lent definitions give contractions with distinct properties.
For example, \cite{Bourbaki1998,Browne2012,Dorst2007,Gallier2020,Hestenes1984,Marcus1975,Pavan2017,Reichardt1957,Rosen2019,Sternberg1964}
	\CITE{BayroCorrochano2018, Shaw1983} 
have each a different contraction. 
Most authors present their favorite one without warning about the others, and their differences are barely discussed in the literature.
\Cref{sc:Appendix} fills this void, and those who are familiar with some contraction (including Hestenes product) may want to take a look at it first.

This work organizes, simplifies and extends the theory of contractions and related subjects.
Most results can be found throughout the literature, but usually only for simple or homogeneous elements, and in forms that seem at odds due to the various conventions. 
Here they are presented in a more general, uniform and streamlined way, with simpler proofs.
This is possible thanks to our notation, 
an improved multi-index formalism, a mirror principle,
and use of general multivectors right from the start.

New results include: 
geometric characterizations of contractions and regressive products; 
higher order graded Leibniz rules for contractions with exterior and Clifford products; 
determinant formulas; 
etc. 
We also study star operators akin to the Hodge star or the dual of GA \cite{Dorst2007,Rosen2019}, 
and a new involution simplifies their use.
A natural concept of complex orientation gives simpler stars, better suited for complex geometry \cite{Huybrechts2004,Wells1980}.

For simplicity, we use contractions of multivectors in Euclidean or Hermitian vector spaces, but most results adapt for contractions with forms, in pseudo-Euclidean spaces, or on manifolds.
The complex case, often neglected but important for geometry and quantum theory, differs from the real one in that contractions are sesquilinear.

Use of general multivectors whenever possible simplifies the theory.
It requires left and right contractions, but a mirror principle facilitates their use.
Authors who focus on homogeneous elements find it redundant to have both, as in such case they differ only by grade dependent signs.
But these signs clutter the algebra, force one to keep track of grades,
and make it hard to work with non-homogeneous elements.
These are important since they 
result from Clifford products \cite{Hestenes1984};
represent quantum superpositions of states with variable numbers of fermions \cite{Oziewicz1986};
appear in Graph Theory \cite{Caracciolo2007grassmann}, via Berezin calculus (whose derivatives and integrals are indeed contractions \cite{Lasenby1993});
and can store data about sets of subspaces of mixed dimensions, which have many applications \cite{Draper2014,Mandolesi_TotalGrassmannian}.

In a follow-up paper \cite{Mandolesi_Contractions2}, 
we use contractions to study 
subspaces associated to a general multivector,
special factorizations and decompositions, 
new simplicity criteria and Plücker-like relations,
supercommutators of multi-fermion creation and annihilation operators, etc.

\Cref{sc:Preliminaries} sets up notation and concepts we use.
\Cref{sc:Contractions} defines contractions and studies their properties.
\Cref{sc:Star Operators and Duals} describes star operators and the regressive product.
\Cref{sc:Appendix} discusses different contractions and conventions found in the literature, in special those of GA.

\section{Preliminaries}\label{sc:Preliminaries}

\begin{table}[]
	\scriptsize
	\centering
	\renewcommand{\arraystretch}{1}
	\begin{tabular}{lll}
		\toprule
		Symbol & Description & Page
		\\
		\cmidrule(lr){1-1} \cmidrule(lr){2-2} \cmidrule(lr){3-3} 
		$X$ & $n$-dimensional Euclidean or Hermitian space & \pageref{df:X}
		\\
		$\inner{\cdot,\cdot}$ & Inner or Hermitian product in $X$ or $\bigwedge X$ & \pageref{df:inner}, \pageref{df:inner AB} 
		\\
		$\MM$, $\MM^q$, $\MM^q_p$, $\II$, $\II^q$, $\II^q_p$ & Sets of non-repeating or increasing multi-indices & \pageref{df:multiindices} 
		\\
		$|\ii|$, $\|\ii\|$ & $|\ii|=p$ and $\|\ii\|=i_1+\cdots+i_p$ for $\ii=(i_1,\ldots,i_p)$ & \pageref{df:length norm i} 
		\\
		$\rr\backslash\ss$, $\rr\ss$, $\ii \cup \jj$, $\ii \cap \jj$, $\ii \triangle \jj$, $\ii'$ & Operations with multi-indices & \pageref{df:operations multiindices} 
		\\
		$\ord{\rr}$ & Ordered $\rr$ & \pageref{df:ordered r} 
		\\
		$\epsilon_\rr$ & Sign of the permutation that orders $\rr$ & \pageref{df:epsilon} 
		\\
		$\pairs{\rr}{\ss}$ & Number of pairs $(r,s) \in \rr\times\ss$ with $r>s$ & \pageref{df:|r>s|} 
		\\
		$\bigwedge V$, $\bigwedge^p V$, $\bigwedge^+ V$ & Exterior algebra, exterior power, even subalgebra & \pageref{df:exterior algebra}, \pageref{df:even subalgebra} 
		\\
		$\wedge$, $\vee$ & Exterior and regressive products & \pageref{df:wedge}, \pageref{df:regressive}
		\\
		$v_\rr$ & $v_\rr = v_{r_1}\wedge\cdots\wedge v_{r_p}$, $v_\emptyset = 1$ & \pageref{df:vr}  
		\\
		$\delta_{\mathbf{P}}$ & Propositional delta (1 if $\mathbf{P}$ is true, 0 otherwise)  & \pageref{df:delta} 
		\\
		$\comp{M}{p}$ & Component of grade $p$ of a multivector $M$ & \pageref{df:component} 
		\\
		$|H|$ & Grade of a homogeneous multivector $H$  & \pageref{df:grade} 
		\\
		$[B]$ & Space of a blade $B$ & \pageref{df:space} 
		\\
		$\|M\|$ & Norm of $M$ & \pageref{df:norm}
		\\
		$P_V$, $\PP_\VV$, $P_B$ & Orthogonal projections & \pageref{df:orthogonal projections} 
		\\
		$\hat{M}, \tilde{M}, \check{M}, \hhat{M}{k}$ & Involutions of $M$ & \pageref{df:involutions} 
		\\
		$\lcontr$, $\rcontr$ & Left and right contractions  & \pageref{df:contraction} 
		\\
		$\pperp$ & Partial orthogonality & \pageref{df:orthogonalities} 
		\\
		$B_P$, $B_\perp$ & Subblades of $B$ in a PO factorization & \pageref{df:PO factorization} 
		\\
		$\Theta_{V,W}$ & Asymmetric angle & \pageref{df:asymm angle} 
		\\
		$M\wedge \bigwedge V$ & $\{M\wedge N:N\in\bigwedge V\}$ & \pageref{df:wedge bigwedge} 
		\\
		$\adj{T}$ & Adjoint of a linear map $T$ & \pageref{df:adjoint} 
		\\
		$\Omega$ & Orientation (unit pseudoscalar) & \pageref{df:star duals}
		\\
		$\star$, $\star_L$, $\star_R$, $\star_B$ & Star operators & \pageref{df:star duals}, \pageref{df:star duals wrt B}
		\\
		$\lH{\! M}$, $\rH{M}$, $\lHB{\! M}$, $\rHB{M}$ & Left and right duals of $M$ & \pageref{df:star duals}, \pageref{df:star duals wrt B} 
		\\
		\bottomrule
	\end{tabular}
	\caption{Some symbols used in this article}
	\label{tab:symbols}
\end{table}

In this article, $X$\label{df:X}
is an $n$-dimensional Euclidean or Hermitian space, with inner product $\inner{\cdot,\cdot}$\label{df:inner} 
(Hermitian product in the complex case, conjugate-linear in the left entry).
When we mention linearity,  it is to be understood in the complex case as sesquilinearity, if appropriate.

\subsection{Multi-index formalism}\label{sc:Multi-index Notation}

For $1\leq p\leq q$,
let\label{df:multiindices}
$\II_p^q= \{ (i_1,\ldots,i_p)\in\N^p : 1\leq i_1 < \cdots<i_p\leq q \}$
and
$\MM_p^q= \{ (i_1,\ldots,i_p)\in\N^p : 1\leq i_j \leq q,
i_j\neq i_k \text{ if } j\neq k \}$.
Also, let 
$\II_0^q=\MM_0^q=\{\emptyset\}$,
$\II^q = \bigcup_{p=0}^q \II_p^q$,
$\MM^q = \bigcup_{p=0}^q \MM_p^q$,
$\II=\bigcup_{q=0}^{\infty} \II^q$ and
$\MM=\bigcup_{q=0}^{\infty} \MM^q$.
Let $|\ii|=p$ and $\|\ii\|=i_1+\cdots+i_p$ for  $\ii=(i_1,\ldots,i_p)$,\label{df:length norm i}
and $|\emptyset|=\|\emptyset\|=0$. 
\CITE{Some authors refer to our norm $\|\ii\|$ as length $|\ii|$.}
We also write $(i_1,\ldots,i_p)$ as $i_1\cdots i_p$,
and, in general, use $\ii,\jj,\kk$ for elements of $\II$, and $\rr,\ss,\tt$ for those of $\MM$.

For $\rr,\ss\in\MM$,\label{df:operations multiindices}
form $\rr\backslash\ss \in \MM$ by removing from $\rr$ any indices of $\ss$.
If they are disjoint (no common indices), $\rr\ss \in \MM$ equals $\rr$ followed by $\ss$.
We write $\rr\subset \ss$ if all indices of $\rr$ are in $\ss$.
Ordering $\rr$ we form $\ord{\rr}\in\II$,\label{df:ordered r}
and $\epsilon_\rr$\label{df:epsilon} is the sign of the permutation that orders it 
($\epsilon_\emptyset=1$).
The number of pairs $(r,s) \in \rr\times\ss$ with $r>s$ is $\pairs{\rr}{\ss}$.\label{df:|r>s|}
\SELF{$|\rr\ss| = |\rr| + |\ss|$ \\ $\|\rr\ss\| = \|\rr\| + \|\ss\|$ \\ 
	$\pairs{\rr\ss}{\tt} = \pairs{\rr}{\tt} + \pairs{\ss}{\tt}$ \\ $\pairs{\rr}{\ss\tt} = \pairs{\rr}{\ss} + \pairs{\rr}{\tt}$ \\
	$\pairs{\ord{\rr}}{\ss} = \pairs{\rr}{\ord{\ss}}  = \pairs{\rr}{\ss}$ \\
	$\pairs{\rr}{\rr} = \frac{|\rr|(|\rr|-1)}{2}$}
For $\ii,\jj\in\II$, form $\ii \cup \jj$, $\ii \cap \jj$ and $\ii \triangle \jj \in \II$ by ordering their union, intersection and symmetric difference.
For $\ii\in\II^q$, let $\ii' = (1,\ldots,q)\backslash\ii$ (its dependence on $q$ is left implicit).

\begin{proposition}\label{pr:epsilon}
	Let $\rr, \ss \in\MM$ and $\ii,\jj,\kk \in\II$  be pairwise disjoint.
		\SELFL{$\epsilon_{\ii\rr} \cdot \epsilon_{\jj\rr} = \epsilon_{\ord{\ii\jj}\,\ord{\rr}}$ \\ $\epsilon_{\rr\ii} \cdot \epsilon_{\rr\jj} = \epsilon_{\ord{\rr}\,\ord{\ii\jj}}$}
	\begin{enumerate}[i)]
		\item $\epsilon_{\rr \ss} = (-1)^{|\rr||\ss|} \epsilon_{\ss \rr}$. \label{it:epsilon graded commutation adjacent}
			
		\item $\epsilon_{\rr \ss} = \epsilon_{\rr} \, \epsilon_{\ord{\rr}\ss}$. 
		\label{it:epsilon ord ra}
		
		\item $\epsilon_{\ii\jj} = (-1)^{\pairs{\ii}{\jj}}$. \label{it:epsilon pairs}
			\CITE{[p.\,32]{Rosen2019}}
			\SELF{same parity as number of $i\in\ii$ for which $\{j\in\jj:j<i\}$ has odd cardinality}
					
		\item $\epsilon_{\ii\ii'} =  (-1)^{\frac{|\ii|(|\ii|+1)}{2} + \|\ii\|}$.\label{it:epsilon i}
			\SELF{$\epsilon_{\ii\ii'}$ does not depend on $\II^q$ used to take $\ii'$, but $\epsilon_{\ii'\ii}$ does}

		\item $\epsilon_{\ii\jj\kk} = \epsilon_{\ii \jj} \, \epsilon_{\ii\kk} \, \epsilon_{\jj\kk}$. \label{it:epsilon r1 rm}
			\SELF{$\epsilon_{\rr_1 \cdots \rr_m} = \prod\limits_{1\leq a < b \leq m} \epsilon_{\rr_a \rr_b}$ \  if $\rr_i\in\II$, or $\rr_i\in\MM$ and $m$ is even}
			\SELF{$\epsilon_{\ord{\ii_1\cdots\ii_m}\,\ord{\jj_1\cdots\jj_n}} = \prod\limits_{1\leq a\leq m, 1\leq b\leq n} \epsilon_{\ii_a \jj_b}$}
	\end{enumerate}
\end{proposition}
\begin{proof}
	\emph{(\ref{it:epsilon graded commutation adjacent})} It takes $|\rr||\ss|$ index transpositions to swap $\rr$ and $\ss$.
	\emph{(\ref{it:epsilon ord ra})} To order $\rr \ss$, we can order $\rr$, then order $\ord{\rr} \ss$.
	\emph{(\ref{it:epsilon pairs})} As $\ii$ and $\jj$ are ordered, the number of transpositions to put each $j\in\jj$ (first $j_1$, then $j_2$, etc.) in order in $\ii\jj$ is the number of indices $i\in\ii$ with $i>j$. 
	\emph{(\ref{it:epsilon i})} For an ordered $\ii=(i_1,\ldots,i_p)$, $\pairs{\ii}{\ii'} = (i_1-1)+(i_2-2)+\cdots+(i_p-p) = \|\ii\|-\frac{p(p+1)}{2}$. 
		\OMIT{\ref{it:epsilon pairs}} 
	\emph{(\ref{it:epsilon r1 rm})} $\epsilon_{\ii\jj\kk} = \epsilon_{\ii \jj} \,\epsilon_{\ord{\ii\jj}\kk}$ and $\epsilon_{\ord{\ii\jj}\kk} = (-1)^{\pairs{\ii\jj}{\kk}} = (-1)^{\pairs{\ii}{\kk} + \pairs{\jj}{\kk}} = \epsilon_{\ii\kk}\, \epsilon_{\jj\kk}$.
		\OMIT{\ref{it:epsilon ord ra}, \ref{it:epsilon pairs}}
\end{proof}


\subsection{Grassmann algebra}\label{sc:Grassmann Algebra}

\emph{Grassmann's exterior algebra} \cite{Bourbaki1998,Dorst2007,Rosen2019} of a subspace $V\subset X$ is a graded algebra $\bigwedge V=\bigoplus_{p\in\Z} \bigwedge^p V$,\label{df:exterior algebra}
with $\bigwedge^0 V = \{$scalars$\} = \R$ or $\C$, $\bigwedge^1 V = V$, and $\bigwedge^p V = \{0\}$ 
if $p \notin [0,\dim V]$.
Its bilinear associative \emph{exterior product} $\wedge$\label{df:wedge}
is alternating, with 
$A\wedge B = (-1)^{pq} B\wedge A \in \bigwedge^{p+q} V$
for $A\in \bigwedge^p V$ and $B\in \bigwedge^q V$.
For $u,v\in V$, $u \wedge v = -v\wedge u$, so $v\wedge v=0$.

\begin{definition}
	Given $v_1,\ldots,v_q \in X$ and $\rr=(r_1,\ldots,r_p)\in\MM_p^q$, let $v_\rr = v_{r_1}\wedge\cdots\wedge v_{r_p}$,\label{df:vr}
	and $v_\emptyset = 1$.
\end{definition}

\begin{definition}\label{df:delta}
	For a proposition $\mathbf{P}$, let $\delta_{\mathbf{P}} = 
	\begin{cases}
		1 \quad &\text{if $\mathbf{P}$ is true}, \\
		0 &\text{otherwise.}
	\end{cases}$ 
\end{definition}

We have  $v_\rr \wedge v_\ss = \delta_{\rr\cap \ss=\emptyset} \, v_{\rr\ss} = \delta_{\rr\cap \ss=\emptyset} \, \epsilon_{\rr\ss} v_{\ord{\rr\ss}}$ for $\rr,\ss\in\MM^q$, and so $v_\ii \wedge v_\jj = \delta_{\ii\cap \jj=\emptyset} \, \epsilon_{\ii\jj} \, v_{\ii \cup \jj}$
for $\ii,\jj\in\II^q$.
A basis $\beta_V=(v_1,\ldots,v_q)$ of $V$ gives bases
$\beta_{\bigwedge^p V} = \{v_\ii\}_{\ii\in\II_p^q}$ of $\bigwedge^p V$,
and
$\beta_{\bigwedge V} = \{v_\ii\}_{\ii\in\II^q}$ of $\bigwedge V$.
For $V=\{0\}$, $\beta_{\bigwedge V} = \beta_{\bigwedge^0 V} = \{1\}$.
If $U\subset V$ then $\bigwedge U \subset \bigwedge V$.

\begin{example}
	If $\beta_V=(v_1,v_2,v_3)$ then $\beta_{\bigwedge^2 V} = (v_{12},v_{13},v_{23})$ and $\beta_{\bigwedge V} = (1,v_1,v_2,v_3,v_{12},v_{13},v_{23},v_{123})$, for $v_{12}=v_1\wedge v_2$, $v_{13}=v_1\wedge v_3$, $v_{23}=v_2\wedge v_3$ and $v_{123}= v_1\wedge v_2\wedge v_3$.
	Also, $v_2 \wedge v_{13} = v_{213} = - v_{123}$ and $v_{13}\wedge v_{23}=0$.
\end{example}

Any $M\in\bigwedge X$ is a \emph{multivector}, 
and $\comp{M}{p}$\label{df:component}
is its component in $\bigwedge^p X$.
Any $H\in\bigwedge^p X$ is \emph{homogeneous} of \emph{grade} $\grade{H}=p$,\label{df:grade}
or a \emph{$p$-vector}.
For $v_1,\ldots,v_p\in X$, $B = v_{1\cdots p}$ is a \emph{simple} $p$-vector, or \emph{$p$-blade}.
We have $B\neq 0 \Leftrightarrow v_1,\ldots,v_p$ are linearly independent, in which case its \emph{space} is $[B] = \Span\{v_1,\ldots,v_p\} = \{v\in X: v\wedge B=0\}$.\label{df:space}
A scalar $\lambda$ is a $0$-blade, with $[\lambda]=\{0\}$,
	\CITE{cite[p.45]{Dorst2007}}
and $0$ is a $p$-blade for all $p$.
For a $p$-dimensional subspace $V$ and a $p$-blade $B\neq 0$, $V=[B] \Leftrightarrow\bigwedge^p V =\Span\{B\}$.
A blade $A$ is a \emph{subblade} of $B$ if $[A] \subset [B]$.
They have \emph{same orientation} if $A=\lambda B$ for $\lambda>0$.
Any $M\in\bigwedge V$ has a (non-unique) \emph{blade decomposition} $M = \sum_i B_i$ for blades $B_i \in \bigwedge V$.

To help distinguish results that only hold for certain kinds of multivectors, we usually (but not always) use $L,M,N$ for general multivectors, $F,G,H$ for homogeneous ones, and $A,B,C$ for blades.

The inner product of  $A=v_1\wedge\cdots\wedge v_p$ and $ B=w_1\wedge\cdots\wedge w_p$ is $\inner{A,B} = \det\!\big(\inner{v_i,w_j}\big)$.\label{df:inner AB}
It is extended linearly, with distinct $\bigwedge^p X$'s being orthogonal and $\inner{\kappa,\lambda}=\bar{\kappa} \lambda$ for $\kappa,\lambda\in\bigwedge^0 X$, where $\bar{\kappa}$ is the complex conjugate.
If a basis $\beta_V$ is orthonormal, so are $\beta_{\bigwedge V}$ and $\beta_{\bigwedge^p V}$.
The norm of $M$ is $\|M\| = \sqrt{\inner{M,M}}$.\label{df:norm}
In the real case, $\|A\|$ is the $p$-dimensional volume of the parallelotope spanned by $v_1,\ldots,v_p$.
In the complex case, $\|A\|^2$ gives the $2p$-dimensional volume of that spanned by $v_1,\im v_1,\ldots,v_p, \im v_p$.

Any linear map $T:X\rightarrow Y$ extends to an \emph{outermorphism}, a linear $T:\bigwedge X \rightarrow \bigwedge Y$ with%
\footnote{By convention, maps take precedence over products: $TM\wedge TN$ means $(TM)\wedge (TN)$.}
$T(M\wedge N) = T M\wedge T N$ for $M,N\in\bigwedge X$, and $T(1)=1$.
	\SELF{Precisa para não ter $T=0$}
If $B$ is a $p$-blade, so is $TB$, and $[TB] = T([B])$ if $TB\neq 0$.
	\OMIT{If $B=v_1\wedge\cdots\wedge v_p$ and $TB\neq 0$, $[TB] = \Span\{Tv_1,\ldots,Tv_p\} = T(\Span\{v_1,\ldots,v_p\}) = T([B])$}
Also, $T(\bigwedge^p V) = \bigwedge^p (T(V))$ for $V\subset X$.
Note that a scalar $\lambda$ times the outermorphism of $T$ is not an outermorphism (so, it is not that of $\lambda T$). 
 
We use $P_V:X\rightarrow V$ and $\PP_{\VV}:\bigwedge X\rightarrow \VV$\label{df:orthogonal projections}
for orthogonal projections onto subspaces $V\subset X$ and $\VV\subset \bigwedge X$, 
and $P_B = P_{[B]}$ for a blade $B$.
As an outermorphism, $P_V = \PP_{\bigwedge V}$.
For $p$-blades $A$ and $B\neq 0$,
$P_B A = \frac{\inner{B,A}}{\|B\|^2} B$.\label{eq:P_B A}
	\OMIT{Uso em \Cref{sc:Geometric characterization}. \\
		$P_{[B]} A = \PP_{\bigwedge^p [B]} A = \PP_{\Span\{B\}} A = \frac{\inner{B,A}}{\|B\|^2} B$}
If $A \neq 0$ we have $P_V A \neq 0 \Leftrightarrow [P_V A] = P_V([A])$.\label{pr:P_V A neq 0}
	\OMIT{For $A=v_1\wedge\cdots\wedge v_p$, $P A = Pv_1\wedge\cdots\wedge Pv_p \neq 0 \Leftrightarrow [PA] =  \Span\{Pv_i$'s$\} = P[A]$. Scalar case trivial.}

\emph{Grade involution} $\hat{\ }$ and \emph{reversion} $\tilde{\ }$ are linear maps $\bigwedge X\rightarrow\bigwedge X$ given by
$\hat{M} = \sum_p (-1)^p \comp{M}{p}$
and
$\tilde{M} = \sum_p (-1)^\frac{p(p-1)}{2} \comp{M}{p}$.\label{df:involutions}
	\SELFL{Patterns are $+-+-$ and $++--$ for $p\!\mod 4 = 0,1,2,3$. \\
	In general, $\hat{M},\tilde{M} \neq \pm M$. \\ 
	Clifford conjugation $\bar{H} = (\tilde{H})\hat{\ } = (-1)^{\frac{p(p+1)}{2}} H$, pattern $+--+$} 
For $M, N\in\bigwedge X$, $(\hat{M})\hat{\ } = (\tilde{M})\tilde{\ } = M$, $\inner{\hat{M},\hat{N}} = \inner{\tilde{M},\tilde{N}} = \inner{M,N}$
	\OMIT{$\inner{M,N}=0$ for $M\in\bigwedge^p X$ and $N\in\bigwedge^q X$ with $p\neq q$}
and $\hhat{(M\wedge N)}{} = \hat{M}\wedge\hat{N}$,
but $\tilde{\ }$ reverses the order, 
\SELF{it is an anti-involution \wrt this product}
$(M\wedge N)\tilde{\ } = \tilde{N} \wedge \tilde{M}$.
\OMIT{assuming $M\in\bigwedge^p X$ and $N\in\bigwedge^q X$, $(M\wedge N)\tilde{\ } = (-1)^{\frac{(p+q)(p+q-1)}{2}} M\wedge N = (-1)^{pq} \tilde{M} \wedge \tilde{N} = \tilde{N} \wedge \tilde{M}$}

\begin{definition}\label{df:repeated grade involution}
	Let $\hhat{M}{k}$
	be a composition of $k$ grade involutions of $M$,
	and 
	$\check{M} = \hhat{M}{(n+1)}$ for $n=\dim X$.
	\SELFR{$\hhat{M}{k} = \begin{cases}
			\hat{M} \quad &\text{if } k \text{ is odd,}  \\
			M &\text{if } k \text{ is even,}
		\end{cases}$
		\\
		$\hhat{H}{k} = (-1)^{kp} H$ if $|H|=p$
		\\
		$\check{M} = \begin{cases}
			M \quad &\text{if } n \text{ is odd,}  \\
			\hat{M} &\text{if } n \text{ is even.}
		\end{cases}$} 
\end{definition}

So, $\check{M} = \sum_p (-1)^{p(n+1)} \comp{M}{p}$, 
and $H\wedge M = \hhat{M}{p}\wedge H$ if $H\in\bigwedge^p X$.
Note that $\check{M} = M$ if $n$ is odd, $M \in \bigwedge^n X$ or $M \in \bigwedge^+ X$, where $\bigwedge^+ X = \bigoplus_{k\in\Z}\bigwedge^{2k} X$ is the \emph{even subalgebra}.\label{df:even subalgebra}

\section{Contractions}\label{sc:Contractions}

We will consider contractions between multivectors, for simplicity, but most of the theory adapts for the other kinds discussed in \Cref{sc:Appendix}.
It also adapts for pseudo-Euclidean spaces \cite{Pavan2017,Rosen2019,Shaw1983}, but some results hold only for non-null blades or have signature-dependent signs:
e.g., \eqref{eq:contr induced basis} becomes $v_{\ii} \lcontr v_{\jj} = \delta_{\ii\subset\jj} \, \sigma_\ii \, \epsilon_{\ii(\jj\backslash\ii)} \, v_{\jj\backslash\ii}$, with $\sigma_\ii = \inner{v_\ii,v_\ii} = \pm 1$.
Contractions can also be defined in spaces with degenerate metrics,
via their basic operational properties (\cite[p.73]{Dorst2007}, \cite[p.\,223]{Lounesto1993}).
	\OMITR{via \ref{pr:main tools}\ref{it:wedge contractor}, \ref{pr:main tools}\ref{it:v lcontr ws}, scalar contraction, sesquilinearity}

\begin{definition}\label{df:contraction}
	The \emph{left contraction} $M\lcontr N$ of a \emph{contractor} $M\in\bigwedge X$ on a \emph{contractee} $N\in\bigwedge X$, and the \emph{right contraction} $N\rcontr M$ of $N$ by $M$, are the unique elements of $\bigwedge X$ satisfying, for all $L\in\bigwedge X$,
	\begin{equation}\label{eq: interior exterior adjoints}
		\inner{L,M\lcontr N} = \inner{M\wedge L,N} \quad \text{ and }\quad \inner{L,N\rcontr M} = \inner{L\wedge M,N}.
	\end{equation}
\end{definition}

So, $M\lcontr$ and $\rcontr M$ are the adjoint operators of $M\wedge$ and $\wedge M$, respectively.
Contractions are bilinear in the real case, but in the complex one they are conjugate-linear in the contractor and linear in the contractee.
We often prove results only for the left contraction, as the right one is similar.

\begin{proposition}\label{pr:contr induced basis}
	For an orthonormal basis $(v_1,\ldots,v_n)$ and $\ii,\jj\in\II^n$,
		\CITE{cite[p.\,364]{Shaw1983}, cite[p.2242]{Kozlov2000I}, cite[p.52]Rosen}
	\begin{equation}\label{eq:contr induced basis}
		\begin{aligned}
			v_{\ii} \lcontr v_{\jj} &= \delta_{\ii\subset\jj} \, \epsilon_{\ii(\jj\backslash\ii)} \, v_{\jj\backslash\ii}, \text{ and} \\ 
			v_{\jj} \rcontr v_{\ii} &= \delta_{\ii\subset\jj} \, \epsilon_{(\jj\backslash\ii)\ii} \, v_{\jj\backslash\ii}.
		\end{aligned}
	\end{equation}
\end{proposition}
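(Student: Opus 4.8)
The plan is to exploit the defining adjoint relation \eqref{eq: interior exterior adjoints} together with the orthonormality of the induced basis $\{e_\kk\}_{\kk\in\II^n}$. Since a multivector is completely determined by its inner products against all the $e_\kk$, it suffices to compute $\inner{e_\kk, e_\ii \lcontr e_\jj}$ for every $\kk\in\II^n$ and then reassemble
\[ e_\ii \lcontr e_\jj = \sum_{\kk\in\II^n} \inner{e_\kk,\, e_\ii \lcontr e_\jj}\, e_\kk. \]
This sidesteps any direct manipulation of the contraction and reduces everything to a single inner product of exterior products.

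By the left-contraction relation, $\inner{e_\kk,\, e_\ii \lcontr e_\jj} = \inner{e_\ii \wedge e_\kk,\, e_\jj}$. First I would evaluate the wedge: $e_\ii \wedge e_\kk$ vanishes when $\ii$ and $\kk$ share an index (an alternating product with a repeated vector), and otherwise equals $\epsilon_{\ii\kk}\, e_{\ord{\ii\kk}}$, since reordering a product of distinct orthonormal basis vectors introduces precisely the sign $\epsilon_{\ii\kk}$ of the sorting permutation. Orthonormality of the induced basis then gives $\inner{e_{\ord{\ii\kk}},\, e_\jj} = \delta_{\ord{\ii\kk}=\jj}$, so only one choice of $\kk$ can survive.

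The last step is to read off that choice. For disjoint $\ii,\kk$ the equality $\ord{\ii\kk}=\jj$ holds exactly when $\ii\subset\jj$ and $\kk = \jj-\ii$, with accompanying sign $\epsilon_{\ii(\jj-\ii)}$; hence $\inner{e_\kk,\, e_\ii \lcontr e_\jj} = \delta_{\ii\subset\jj}\,\delta_{\kk=\jj-\ii}\,\epsilon_{\ii(\jj-\ii)}$, and summing over $\kk$ yields the stated left-contraction formula. The right contraction is obtained identically, starting from $\inner{e_\kk,\, e_\jj \rcontr e_\ii} = \inner{e_\kk \wedge e_\ii,\, e_\jj}$ and using $e_\kk \wedge e_\ii = \epsilon_{\kk\ii}\, e_{\ord{\kk\ii}}$, which replaces the sign by $\epsilon_{(\jj-\ii)\ii}$. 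The computation is routine once the adjoint definition is combined with the basis expansion; the only points needing care are confirming the wedge-to-basis sign $\epsilon_{\ii\kk}$ (and its vanishing on repeated indices) and translating $\ord{\ii\kk}=\jj$ into the clean conditions $\ii\subset\jj$ and $\kk=\jj-\ii$. Note that in the complex case no conjugation intrudes, since every coefficient in sight is one of the real signs $\pm 1$ and the basis is orthonormal, so the same expansion remains valid verbatim.
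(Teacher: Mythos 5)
Your proposal is correct and follows essentially the same route as the paper's own proof: both compute $\inner{e_\kk, e_\ii\lcontr e_\jj} = \inner{e_\ii\wedge e_\kk, e_\jj}$ via the adjoint relation \eqref{eq: interior exterior adjoints} and observe that this vanishes unless $\ii\cap\kk=\emptyset$ and $\jj=\ord{\ii\kk}$, i.e.\ $\ii\subset\jj$ and $\kk=\jj-\ii$, with sign $\epsilon_{\ii(\jj-\ii)}$ (and $\epsilon_{(\jj-\ii)\ii}$ for the right contraction). Your write-up is merely more explicit about the basis expansion and the absence of complex conjugations, both of which are fine.
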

\begin{proof}
	For $\kk\in\II^n$, $\inner{v_\kk,v_\ii\lcontr v_\jj} = \inner{v_\ii \wedge v_\kk,v_\jj}$ vanishes unless $\ii\cap\kk=\emptyset$ and $\jj=\ord{\ii\kk}$ (so $\ii\subset\jj$ and $\kk=\jj\backslash\ii$), in which case it gives $\epsilon_{\ii(\jj\backslash\ii)}$.
\end{proof}

So $v_\ii \lcontr v_\jj$ (\resp $v_\jj \rcontr v_\ii$) is $0$ if $\ii\not\subset\jj$, otherwise the elements of $v_\ii$ are moved to the left (\resp right) of $v_\jj$ and canceled.
For a scalar $\lambda$ and $M\in\bigwedge X$ we have $\lambda \lcontr M = \bar{\lambda} M$, and $M\lcontr \lambda = \bar{\kappa}\lambda$ with $\kappa = \comp{M}{0}$.

\begin{example}\label{ex:contr}
	If $M = v_4 + \im v_{234}$ and $N = (\im - 3) v_{34} + v_{124}$ for $(v_1,\ldots,v_4)$ orthonormal then 
	$M\lcontr N 
	= (\im-3) v_4\lcontr v_{34} + v_4 \lcontr v_{124} - \im(\im - 3)  v_{234}\lcontr v_{34} - \im v_{234}\lcontr v_{124} 
	= (3-\im) v_4\lcontr v_{43} + v_4 \lcontr v_{412} + 0 + 0
	= (3-\im) v_3 + v_{12}$.
	Likewise, $N\rcontr M = (\im-3) v_3 + v_{12}$ and $N\lcontr M = M \rcontr N = (1-3\im) v_2$.
\end{example}

\begin{proposition}\label{pr:contr homog}
	Let $G\in\bigwedge^p X$, $H\in\bigwedge^q X$ and $M, N \in \bigwedge X$.
	\begin{enumerate}[i)]
		\item $G\lcontr H,H\rcontr G \in\bigwedge^{q-p} X$.\label{it:grade contr}
		\item If $p=q$ then $G \lcontr H = H\rcontr G = \inner{G,H}$.\label{it:contr inner}
		\item If $p>q$ then $G\lcontr H = H\rcontr G = 0$. \label{it:larger contractee}
		\item $G\lcontr M = \hhat{M}{p} \rcontr \hat{G}$ and $M\lcontr H = H \rcontr \hhat{M}{(q+1)}$.\label{it:contr left right M}
			\OMIT{\ref{pr:epsilon}\ref{it:epsilon graded commutation adjacent}}
		\item If $N\in\bigwedge V$ for $V\subset X$ then $M\lcontr N\in\bigwedge V$.\label{pg:M contr N subset N}
		\item $\hhat{(M\lcontr N)}{} = \hat{M}\lcontr\hat{N}$, $(M\lcontr N)\check{\ } = \check{M}\lcontr\check{N}$, but $(M\lcontr N)\tilde{\ } = \tilde{N}\rcontr\tilde{M}$. \label{it:contr involutions}
			\SELF{Cliff conj: \\ $(M\lcontr N)\bar{\ } = \bar{N}\rcontr\bar{M}$}
	\end{enumerate}
\end{proposition}
\begin{proof}
	\emph{(\ref{it:grade contr}--\ref{it:larger contractee})} Follow from \eqref{eq:contr induced basis} and linearity, with $G=\sum_{\ii\in\II_p^n} \lambda_\ii v_\ii$ and $H=\sum_{\jj\in\II_q^n} \kappa_\jj v_\jj$ for scalars $\lambda_\ii$, $\kappa_\jj$.
	\emph{(\ref{pg:M contr N subset N})} Likewise, extending an orthonormal basis $(v_1,\ldots,v_r)$ of $V$ to $X$, so $M=\sum_{\ii\in\II^n} \lambda_\ii v_\ii$ and $N=\sum_{\jj\in\II^r} \kappa_\jj v_\jj$.
	\emph{(\ref{it:contr involutions})} $\inner{L,(M\lcontr N)\tilde{\ }} = \inner{\tilde{L},M\lcontr N} = \inner{M\wedge \tilde{L}, N} = \inner{(M\wedge \tilde{L})\tilde{\ }, \tilde{N}} = \inner{L\wedge \tilde{M}, \tilde{N}} = \inner{L, \tilde{N} \rcontr \tilde{M}}$.
	Likewise for $\hat{\ }$ and $\check{\,}$, but without swapping $L$ and $M$.
\end{proof}

So, contractions generalize inner products, giving multivectors instead of scalars if grades differ.
They vanish if the contractor has larger grade,
and this will make the asymmetry $M\lcontr N \neq N\lcontr M$ useful.
While \ref{it:contr left right M} gives $G\lcontr H = (-1)^{p(q+1)} H\rcontr G$,
in general $M\lcontr N \neq \pm N \rcontr M$.
In \ref{it:contr involutions}, contractor and contractee keep their roles as $\tilde{\ }$ swaps them and switches $\lcontr$ and $\rcontr$.

A \emph{mirror principle} follows by applying $\tilde{\ }$ to a formula (with $\wedge$, $\lcontr$, $\rcontr$, $\pm$ or sideless operators that commute with $\tilde{\ }$, like $\hat{\ }$), distributing over all terms, and renaming them:
if the formula is valid for generic elements,
so is its mirror version, with terms in reversed order, and $\lcontr$ and $\rcontr$ switched.
For example, $\tilde{\ }$ applied to \ref{it:wedge contractor} below gives
$\tilde{N} \rcontr (\tilde{M} \wedge \tilde {L}) = (\tilde{N} \rcontr \tilde{L}) \rcontr \tilde{M}$,
and relabeling $\tilde{L}$, $\tilde{M}$, $\tilde{N}$ as $L$, $M$, $N$ we find the mirror formula 
$N\rcontr(M\wedge L) = (N\rcontr L)\rcontr M$.
The principle would be more general if notations were designed for this: e.g., $\inner{\cdot,\cdot}$ would have to show which entry is conjugate-linear (not the worst idea).
It is best to allow a certain flexibility, with some elements keeping their order (learning which ones takes just a little practice).



The following are the main tools for operating with contractions.

\begin{proposition}\label{pr:main tools}
	Let $v,w_1,\ldots,w_q \in X$, $H\in\bigwedge^p X$ with $p\leq q$, and $L,M,N\in\bigwedge X$. Then:
	\begin{enumerate}[i)]
		\item $(L\wedge M)\lcontr N = M\lcontr (L\lcontr N)$.\label{it:wedge contractor}
			\SELF{Adjoint formula of the associativity of $\wedge$}\SELF{$G\lcontr (H\lcontr M) = (-1)^{pq} H\lcontr (G\lcontr M)$, for  $G\in\bigwedge^p X$, $H\in\bigwedge^q X$}
		
		\item $H\lcontr w_{1\cdots q} = \sum_{\ii\in\II_p^q} \epsilon_{\ii\ii'} \,\inner{H,w_\ii}\,w_{\ii'}$. \label{it:H contr ws}
			\CITE{Multilinear Algebra, Greub p.120 ex. 6: fórmula equiv. p/contração multivetor com forma, notação pior. \\
			Geom Alg, Chisolm eq. 95: fórmula semelhante pra $\glcontr$}
		
		\item $v \lcontr w_{1\cdots q } = \sum_{i=1}^q  (-1)^{i-1} \inner{v,w_i} w_{1\cdots i' \cdots q}$, where $i'$ means $i$ is absent.\label{it:v lcontr ws}
			
		\item $v\lcontr(M\wedge N) = (v\lcontr M)\wedge N + \hat{M}\wedge(v\lcontr N)$.\label{it:Leibniz vector grade inv}
	\end{enumerate}
\end{proposition}
\begin{proof}
	\emph{(\ref{it:wedge contractor})} Follows from \eqref{eq: interior exterior adjoints} and associativity of $\wedge$.
	\emph{(\ref{it:H contr ws})}
	Linearity lets us assume $H$ is a blade. For a ($q-p$)-blade $B$,
	Laplace determinant expansion gives
	$\inner{B,H\lcontr w_{1\cdots q}} = \inner{H\wedge B,w_{1\cdots q}} = \sum_{\ii\in\II_p^q} \epsilon_{\ii\ii'} \inner{H,w_\ii}\,\inner{B,w_{\ii'}}$.
	\emph{(\ref{it:v lcontr ws})} 
	Follows from \ref{it:H contr ws}.
	\OMIT{For $v_1=v$ and $v_2,\ldots,v_q  \in X$ we have
		$\inner{v_{2\cdots q },v \lcontr w_{1\cdots q }} = \inner{v_{1\cdots q },w_{1\cdots q }} = \det(\inner{v_i,w_j}) = \sum_{i=1}^q  (-1)^{i-1} \, \inner{v,w_i}\, \inner{v_{2\cdots q },w_{1\cdots i' \cdots q }}$, by Laplace expansion.}
	\emph{(\ref{it:Leibniz vector grade inv})} Follows from \ref{it:v lcontr ws}, as we can assume $M=w_{1\cdots p}$ and $N=w_{(p+1)\cdots (p+q)}$ for $w_1,\ldots,w_{p+q} \in X$. 
	\OMIT{$v\lcontr(M\wedge N) 
		= \sum_{i=1}^p (-1)^{i-1} \inner{v,w_i}  w_{1\cdots i' 	\cdots p} \wedge N 
		+ \sum_{i=p+1}^{p+q} (-1)^{i-1} \inner{v,w_i}\cdot 	M\wedge w_{(p+1)\cdots i' \cdots (p+q)} 	
		= (v\lcontr M)\wedge N + (-1)^p M\wedge(v\lcontr N)$}
\end{proof}

The reordering of $L$ and $M$ in \ref{it:wedge contractor} reflects the adjoint nature of contractions. 
Some conventions avoid it, but have other difficulties (see \Cref{sc:Other Conventions}).
The mirror of \ref{it:H contr ws} is $w_{1\cdots q}\rcontr H = \sum_{\ii\in\II_p^q} \epsilon_{\ii'\ii} \,\inner{H,w_{\ii}}\,w_{\ii'}$ (after some relabeling).
\SELF{mirror $\epsilon_{\ii\ii'} \leftrightarrow \epsilon_{\ii'\ii}$}
In \ref{it:v lcontr ws}, the sign is $+$ at the first term of the sum,
while $w_{1\cdots q } \rcontr v = \sum_{i=1}^q  (-1)^{q -i} \inner{v,w_i} w_{1\cdots i' \cdots q }$
has $+$ at the last one.
As \ref{it:Leibniz vector grade inv} is a graded Leibniz rule, $v \lcontr$ is a graded derivation.
The notation makes it, and $(M\wedge N)\rcontr v = M\wedge(N\rcontr v) + (M\rcontr v)\wedge \hat{N}$, look natural: as $v$ `approaches' $M\wedge N$ from either side, it applies $\hat{\ }$ on the term over which it `jumps'.
Some authors switch left and right contractions, losing this.
	\SELF{$(v\lcontr)^2=0$, so $(\bigwedge X, v\lcontr)$ is a differential graded algebra. \\ Koszul complex (Wikipedia).}

%

\begin{example}
	Let $v,w_1, \ldots, w_4 \in X$ and $H\in\bigwedge^2 X$. Then
	$v\lcontr (w_1\wedge w_2) = \inner{v,w_1}w_2 - \inner{v,w_2}w_1$, 
	while
	$(w_1\wedge w_2)\rcontr v = -\inner{v,w_1}w_2 + \inner{v,w_2}w_1$.
	Also,
	$H\lcontr w_{1234} =  \inner{H,w_{12}}w_{34}  - \inner{H,w_{13}}w_{24} + \inner{H,w_{14}}w_{23} + \inner{H,w_{23}}w_{14} - \inner{H,w_{24}}w_{13} + \inner{H,w_{34}}w_{12} = w_{1234} \rcontr H$.
		\SELFL{$H\lcontr w_{123} =  \inner{H,w_{12}}w_3  - \inner{H,w_{13}}w_2 + \inner{H,w_{23}}w_1 = w_{123} \rcontr H$ \\ For $H\in\bigwedge^3 X$:
		$H\lcontr w_{1234} =  \inner{H,w_{123}}w_4  - \inner{H,w_{124}}w_3 + \inner{H,w_{134}}w_2 - \inner{H,w_{234}}w_1 = -w_{1234} \rcontr H$}
\end{example}

\begin{corollary}\label{pr:v1...vk contr M}
	$v_{1\cdots p} \lcontr M = v_p\lcontr(\cdots\lcontr(v_1\lcontr M)\cdots)$, for $v_1,\ldots,v_p\in X$ and $M\in\bigwedge X$.
		\OMIT{\ref{pr:main tools}\ref{it:wedge contractor}}
\end{corollary}

\begin{corollary}\label{pr:v wedge (M contr N)}
	$v \wedge (M \lcontr N) = (M \rcontr v) \lcontr N + \hat{M} \lcontr (v \wedge N)$, for $v\in X$ and $M,N\in\bigwedge X$.
	\CITE{Hestenes1984 p.12 (1.43). \\ Uso em \cite{Mandolesi_Contractions2}}
\end{corollary}
\begin{proof}%
	\OMITL{\ref{pr:main tools}\ref{it:Leibniz vector grade inv}, and $M \leftrightarrow \hat{M}$}
	For $L \in \bigwedge X$, we have 
	$\inner{L, \hat{M} \lcontr (v \wedge N)} 
	= \inner{v \lcontr (\hat{M} \wedge L), N} 
	= \inner{(v \lcontr \hat{M}) \wedge L + M \wedge (v \lcontr L), N}
	= \inner{L, -(M \rcontr v) \lcontr N + v \wedge (M \lcontr N)}$.
\end{proof}

In terms of operators, this is the adjoint of \ref{it:Leibniz vector grade inv}, arranged for convenience.

\begin{corollary}\label{pr:complet orth Leibniz}
	Let $M\in \bigwedge V$ and $N\in\bigwedge W$.
	\begin{enumerate}[i)]
		\item If $L\in\bigwedge (W^\perp)$ then $L\lcontr(M\wedge N) = (L\lcontr M)\wedge N$. \label{it:contr L cperp N}
		\item If $H\in\bigwedge^p (V^\perp)$ then $H\lcontr(M\wedge N) =  \hhat{M}{p} \wedge(H\lcontr N)$. \label{it:contr H cperp M}
	\end{enumerate}
\end{corollary}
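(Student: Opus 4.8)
The plan is to obtain both identities as immediate consequences of the higher-order graded Leibniz rule (\Cref{pr:generalized Leibniz}), with complete orthogonality collapsing the whole sum to a single term.

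First I would reduce to blades. Both sides of each identity are conjugate-linear in the contractor ($L$ in \ref{it:contr L cperp N}, $H$ in \ref{it:contr H cperp M}) and linear in $M$ and $N$, so \Cref{pr:pperp blade decomp}, which converts $L\cperp N$ (resp.\ $H\cperp M$) into pairwise completely orthogonal blade decompositions, lets me assume everything is a blade. Concretely, for \ref{it:contr L cperp N} I take $L=A$ a $p$-blade and $N=C$ a blade with $[A]\perp[C]$; for \ref{it:contr H cperp M} I take $H=B$ a $p$-blade and $M=A$ a blade with $[B]\perp[A]$, choosing the decomposition of the homogeneous $H$ into $p$-blades.

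Next I would apply \Cref{pr:generalized Leibniz}. For \ref{it:contr L cperp N}, expand $A\lcontr(M\wedge C)=\sum_{\ii\in\II^p}\epsilon_{\ii\ii'}(A_{\ii'}\lcontr\ginv{M}{|\ii|})\wedge(A_\ii\lcontr C)$; since every subblade satisfies $[A_\ii]\subset[A]\perp[C]$, i.e.\ $A_\ii\cperp C$, \Cref{pr:contr pperp cperp}\ref{it:cperp contr} gives $A_\ii\lcontr C=\overline{\hcomp{A_\ii}_0}\,C$, which vanishes unless $A_\ii$ is scalar, i.e.\ $\ii=\emptyset$. That single survivor has $A_\emptyset\lcontr C=C$ (by \Cref{pr:contr homog}\ref{it:contr scalar}), $A_{\ii'}=A$, $\ginv{M}{0}=M$ and $\epsilon_{\emptyset(1\cdots p)}=1$, yielding $(A\lcontr M)\wedge C$. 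For \ref{it:contr H cperp M}, the same expansion $B\lcontr(A\wedge N)=\sum_{\ii\in\II^p}\epsilon_{\ii\ii'}(B_{\ii'}\lcontr\ginv{A}{|\ii|})\wedge(B_\ii\lcontr N)$ now has $[B_{\ii'}]\subset[B]\perp[A]=[\ginv{A}{|\ii|}]$, so $B_{\ii'}\lcontr\ginv{A}{|\ii|}=\overline{\hcomp{B_{\ii'}}_0}\,\ginv{A}{|\ii|}$ vanishes unless $B_{\ii'}$ is scalar, i.e.\ $\ii'=\emptyset$ and $\ii=(1,\ldots,p)$ with $|\ii|=p$; that term gives $\ginv{A}{p}\wedge(B\lcontr N)$, which is $\ginv{M}{p}\wedge(H\lcontr N)$ after renaming.

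The argument is essentially bookkeeping rather than a genuine obstacle, so the one thing I would verify carefully is that the surviving term carries the right sign and grade-involution exponent. In \ref{it:contr L cperp N} the survivor sits at $\ii=\emptyset$, so no grade involution acts on $M$ ($\ginv{M}{0}=M$) and $\epsilon_{\ii\ii'}=1$; in \ref{it:contr H cperp M} it sits at $\ii'=\emptyset$, which forces exactly $|\ii|=p$ grade involutions on $M$ and again $\epsilon_{\ii\ii'}=1$. I would also make sure the reduction in \ref{it:contr H cperp M} keeps $H$ homogeneous of grade $p$, so that the exponent in $\ginv{M}{p}$ is the grade of $H$ and not that of the individual blade summands of $M$.
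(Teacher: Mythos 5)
Your proposal is correct and takes essentially the same route as the paper's proof: reduce to completely orthogonal blades via \Cref{pr:pperp blade decomp}, apply the generalized Leibniz rule \eqref{eq:Leibniz generalizada}, and note that complete orthogonality kills every term except $\ii=\emptyset$ in part (\emph{i}) (\resp $\ii'=\emptyset$ in part (\emph{ii}), which the paper leaves as ``similar'' and you spell out with the correct sign and involution exponent). The only cosmetic difference is that the paper first disposes of scalar blade summands of the contractor via \Cref{pr:contr homog}\ref{it:contr scalar} so as to assume a non-scalar blade, whereas you let the surviving $\ii=\emptyset$ term absorb that case.
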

\begin{proof}
	\emph{(\ref{it:contr L cperp N})} Linearity and \Cref{pr:v1...vk contr M} let us assume $L=v \in W^\perp$,
		\OMIT{trivial if $L$ is scalar}
	in which case it follows as in the proof of \Cref{pr:main tools}\ref{it:Leibniz vector grade inv}.
		\OMIT{Não posso usar \ref{it:Leibniz vector grade inv} direto pois ainda não provei $v\lcontr N=0$}
	\emph{(\ref{it:contr H cperp M})} Likewise.
\end{proof}

In \cite{Mandolesi_Contractions2}, we show what the solutions of $v \wedge M = 0$ and $v \lcontr M=0$, with $v \in X$, reveal about the structure of a multivector $M \in \bigwedge X$.
For now, note that \ref{it:Leibniz vector grade inv} gives
	\OMITR{\ref{pr:contr homog}\ref{it:contr inner}}
$v \lcontr (v \wedge M) + v \wedge (v \lcontr M) = \|v\|^2 M$, so:

\begin{corollary}
	$v\wedge M = v\lcontr M =0 \Leftrightarrow v=0$ or $M=0$.
		\CITEL{[p.114]{Pavan2017} for $M$ homogeneous, with worse proof}
\end{corollary}

\begin{corollary}\label{pr:wedge contr ker image}
	For $0\neq v \in X$ and $M \in \bigwedge X$:
		\OMIT{\ref{pr:main tools}\ref{it:wedge contractor},\ref{it:Leibniz vector grade inv}}
	\begin{enumerate}[i)]
		\item $v\wedge M = 0 \Leftrightarrow M = v\wedge N$ for $N\in\bigwedge X$. In particular, $N = \frac{v \lcontr M}{\|v\|^2}$.\label{it:wedge ker image}
		\item $v\lcontr M = 0 \Leftrightarrow M = v\lcontr N$ for $N\in\bigwedge X$. In particular, $N = \frac{v\wedge M}{\|v\|^2}$.\label{it:contr ker image}%
			\SELFL{More generally, for $N = \frac{v+L}{\|v\|^2} \wedge M$ with $L \in \bigwedge ([v]^\perp)$}
	\end{enumerate}
\end{corollary}

\begin{corollary}\label{pr:equations isp subset osp}
	If $M\neq 0$, $\{v\in X: v\wedge M = 0\} \perp \{w\in X : w\lcontr M = 0 \}$.
\end{corollary}
\begin{proof}
	$v\wedge M = w \lcontr M = 0 \Rightarrow 0 = w \lcontr (v \wedge M) =	\inner{w,v} M \Rightarrow w \perp v$.
	\OMITR{\ref{pr:wedge contr ker image}, \ref{pr:main tools}\ref{it:Leibniz vector grade inv}, \ref{pr:contr homog}\ref{it:contr inner}}
\end{proof}

\begin{proposition}\label{pr: v contr M wedge N = 0}
	\SELF{Uso em \cite{Mandolesi_Contractions2}}
	For $v\in X$ and nonzero $M\in\bigwedge V$ and $N\in\bigwedge W$ in disjoint subspaces $V$ and $W$, $v\lcontr(M\wedge N) = 0 \Leftrightarrow v\lcontr M = v\lcontr N = 0$.
\end{proposition}
\begin{proof}%
	\OMIT{\ref{pr:contr homog}\ref{pg:M contr N subset N}}
	($\Leftarrow$) Follows from \Cref{pr:main tools}\ref{it:Leibniz vector grade inv}.
	($\Rightarrow$)
	If the largest grade in $M$ is $r\neq 0$,
	\OMIT{$r=0$ is trivial} 
	it is at most $r-1$ in $v\lcontr M  \in \bigwedge V$.
	Given bases $(v_1,\ldots,v_p)$ of $V$ and $(w_1,\ldots,w_q)$ of $W$,
	$(v\lcontr M)\wedge N$ has no component with $\ii\in\II^p_r$ in the basis $\{v_\ii \wedge w_\jj\}_{\ii\in\II^p\!,\, \jj\in\II^q}$ of $\bigwedge(V\oplus W)$.
	Unless $v\lcontr N = 0$, $\hat{M}\wedge(v\lcontr N)$ has, contradicting $(v\lcontr M)\wedge N + \hat{M}\wedge(v\lcontr N) = 0$.
	Likewise, $v\lcontr M = 0$.
\end{proof}

In \cite{Mandolesi_Contractions2}, we show how $M\lcontr N = 0$ is linked, in a sense, to orthogonality
(after all, contractions generalize inner products). 
For now, we have:

\begin{proposition}\label{pr:v contr M = 0}
	$v\lcontr M = 0 \Leftrightarrow M \in \bigwedge([v]^\perp)$, for $v \in X$ and $M \in \bigwedge X$.
\end{proposition}
\begin{proof}
	($\Rightarrow$)
	Assume $(v,w_1,\ldots,w_{n-1})$ is an orthonormal basis of $X$,
	so $\bigwedge X = \Span\{w_\ii, v\wedge w_\ii\}_{\ii\in\II^{n-1}}$. 
	By \Cref{pr:main tools}\ref{it:v lcontr ws}, $v \lcontr w_\ii = 0$ and $v\lcontr (v \wedge w_\ii) = w_\ii$.
		\OMIT{$v \lcontr w_\emptyset = 0$ trivially} 
	By \Cref{pr:wedge contr ker image}\ref{it:contr ker image}, $M \in \Span\{v \lcontr w_\ii,v\lcontr (v \wedge w_\ii)\}_{\ii\in\II^{n-1}} = \Span\{w_\ii\}_{\ii\in\II^{n-1}} = \bigwedge([v]^\perp)$.
	($\Leftarrow$) $M \in \Span\{w_\ii\}_{\ii\in\II^{n-1}}$, so $v\lcontr M=0$.
\end{proof}

\begin{corollary}
	$[B] = \{v\in X:v \lcontr B = 0\}^\perp$, for a blade $B$.
		\OMIT{\ref{pr:v contr M = 0}}
\end{corollary}

\begin{definition}\label{df:orthogonalities}
	$U$ is \emph{partially orthogonal} ($\pperp$) to $V$ if $V^\perp \cap U \neq \{0\}$.
\end{definition}


For a blade $B\neq 0$, $[B]\pperp V \Leftrightarrow P_V B = 0$.\label{pr:PWB=0}
	\OMIT{p. \pageref{pr:P_V A neq 0}}

\begin{proposition}\label{pr:B contr M pperp}
	Given a blade $B\neq 0$ and a subspace $V\subset X$, we have $[B] \pperp V \Leftrightarrow B \lcontr M = 0$ for all $M \in \bigwedge V$.
\end{proposition}
\begin{proof}
	($\Rightarrow$) Follows from Propositions \ref{pr:v1...vk contr M} and \ref{pr:v contr M = 0}.
	($\Leftarrow$) $\|P_V B\|^2 = \inner{B, P_V B} = B \lcontr P_V B = 0$.
\end{proof}

\begin{corollary}\label{pr:M contr N orthog}
	$M \in \bigwedge V$, $N \in \bigwedge (V^\perp) \Rightarrow M \lcontr N = \bar{\lambda} N$ for $\lambda = \comp{M}{0}$. 
		\OMIT{\ref{pr:B contr M pperp}, for a blade decomposition of $M$}
\end{corollary}

In such case, if $M$ has no scalar component then $M \lcontr N = 0$.


Next, we combine left and right contractions.

\begin{proposition}\label{pr:assoc lcontr rcontr}
	$L\lcontr (M\rcontr N) = (L\lcontr M)\rcontr N$, for $L, M, N \in \bigwedge X$. 
\end{proposition}
\begin{proof}
	Follows from \eqref{eq: interior exterior adjoints} and associativity of $\wedge$.
\end{proof}

This lets us write just $L \lcontr M \rcontr N$.
Note the order of $\lcontr$ and $\rcontr$, as, in general, $L\rcontr (M\lcontr N) \neq (L\rcontr M)\lcontr N$.

\begin{proposition}\label{pr:triple subblade}
	Let $A$ and $B$ be blades, and $M,N \in\bigwedge X$.
	\begin{enumerate}[i)]
		\item If $[A] \subset [B]$ then $(A\rcontr M)\lcontr B = (P_A M)\wedge(A\lcontr B)$. \label{it:AMB}
		\item If $N \in \bigwedge [B]$ then 
		$(M\rcontr N)\lcontr B = N\wedge(M\lcontr B)$. \label{it:MAB}
		\CITE{\cite[p.\,594]{Dorst2007} proves \ref{it:MAB} for blades via 3 inductions on grades.}
	\end{enumerate}
\end{proposition}
\begin{proof}
	Extend an orthonormal basis of $[A]$ to $[B]$ and then to  $(v_1,\ldots,v_n)$ of $X$,
	and assume $N = A = v_\ii$, $B=v_\jj$ and $M=v_\kk$ for $\ii,\jj,\kk\in\II^n$ with $\ii\subset \jj$.
	\emph{(\ref{it:AMB})}  
	\SELF{If $\ii\subset\jj$: \\
		$(v_\ii \rcontr v_\kk) \lcontr v_\jj = v_\kk \wedge (v_\ii \lcontr v_\jj) \cdot \delta_{\kk \subset \ii}$ \\
		$(v_\kk \rcontr v_\ii) \lcontr v_\jj = v_\ii \wedge (v_\kk \lcontr v_\jj)$}
	If $\kk\not\subset \ii$ both sides vanish, otherwise $\ii=\ord{\kk\ll}$ and $\jj=\ord{\kk\ll\mm}$ for $\ll,\mm\in\II^n$, 
	and so 
	\OMIT{\ref{pr:epsilon}\ref{it:epsilon ord ra}, \ref{pr:contr induced basis}}
	$(v_\ii \rcontr v_\kk) \lcontr v_\jj 
	= \epsilon_{\ll\kk} v_\ll \lcontr v_\jj 
	= \epsilon_{\ll\kk} \epsilon_{\ll\kk\mm} v_{\kk\mm} 
	= \epsilon_{\ord{\kk\ll}\mm} v_{\kk} \wedge v_{\mm} 
	= v_\kk \wedge (v_\ii \lcontr v_\jj) 
	= (P_{v_\ii} v_\kk) \wedge (v_\ii \lcontr v_\jj)$.
	\emph{(\ref{it:MAB})} 
	Both sides vanish unless $\ii\subset \kk\subset \jj$, in which case \ref{it:AMB} gives $(v_\kk \rcontr v_\ii) \lcontr v_\jj = (P_{v_\kk} v_\ii) \wedge (v_\kk \lcontr v_\jj) = v_\ii \wedge (v_\kk \lcontr v_\jj)$.
\end{proof}

In \cite{Mandolesi_Contractions2}, we show how to replace $B$ by a general multivector. 
\CITE{pr:reformulated triple prods}
By \ref{it:AMB} and its mirror, $(B\rcontr M)\lcontr B = B\rcontr( M\lcontr B)$, so we can write $B\rcontr M\lcontr B$.

\begin{corollary}\label{pr:P B}
	$B\rcontr M\lcontr B = P_B M$, for a unit blade $B$ and $M\in\bigwedge X$.%
	\OMIT{\ref{pr:triple subblade}\ref{it:AMB} and its mirror}%
	\CITE{{Dorst2007} proves GA formula geometrically p.85, and algebraically p.106 \\
		Rosen exerc 2.8.9}
\end{corollary}

\begin{corollary}
	If $M = N \lcontr B$ for $N \in \bigwedge X$ and a blade $B \neq 0$ then $M =  \frac{B\rcontr M}{\|B\|^2} \lcontr B$. 
		\OMIT{\ref{pr:P B}}
		\MAYBE{estudar operadores anti-lineares $l_B(N)=N\lcontr B$ \\ $r_B(N)=B\rcontr N$?}
\end{corollary}

The method used in \Cref{pr:triple subblade}
	\SELFL{$v_\ii \rcontr v_\jj \lcontr v_\ii = v_\jj \cdot \delta_{\jj \subset \ii}$, \\
	$v_\ii \lcontr (v_\ii \wedge v_\jj) = v_\jj \cdot \delta_{\ii\subset\jj'} = v_\jj \cdot \delta_{\ii\cap\jj = \emptyset}$, \\
	$v_\ii \wedge (v_\ii \lcontr v_\jj) = v_\jj \cdot \delta_{\ii \subset \jj}$, \\
	$(v_\ii \lcontr v_\jj) \lcontr v_\ii = (v_\jj \lcontr v_\ii) \wedge v_\ii = v_\jj \cdot \delta_{\ii=\jj}$}
also gives other triple products.
For any blade $B$, we find $B\lcontr(M\lcontr B) = \bar{\lambda} \|B\|^2$ for $\lambda = \comp{M}{0}$,
	\OMIT{\ref{pr:contr homog}\ref{it:contr left right M}, \ref{pr:P B}}
and
$(B\lcontr M)\lcontr B = (M\lcontr B)\wedge B = \inner{M,B} B$ ($= \comp{P_B M}{p}$ if $B$ is a real unit $p$-blade).%
	\SELF{$= \PP_{\bigwedge^p [B]} M$.  Now all terms are $0$ if $\ii\neq\jj$. In complex case is conjugate-linear in $M$, so not a projection. These products coincide  only due to the $B$'s}
If it is nonscalar, $B\lcontr M \rcontr B = 0$.
\SELF{$= \bar{\lambda}^2 M$ if $B = \lambda$; \\
	$=(B\wedge B)\lcontr M = B\lcontr (B\lcontr M)$ \\
	\ \\
	$(B\wedge M)\lcontr B = M\lcontr (B\lcontr B)$ \\ $B\lcontr(M\lcontr B)$ não ganha sinal pois só fica termo escalar}

\subsection{Geometric interpretation}\label{sc:Geometric characterization}

Now we obtain complete geometric characterizations for contractions of nonzero blades $A\in\bigwedge^p X$ and $B\in\bigwedge^q X$.

\begin{definition}\label{df:PO factorization}
	$B=B_P\wedge B_\perp$ is a \emph{projective-orthogonal (PO) factorization} \wrt $A$ if 
	$B_P$ and $B_\perp$ are subblades of $B$ of grades $m=\min\{p,q\}$ and $q-m$, respectively, with $[B_\perp]$ orthogonal to $[A]$ and $[B_P]$.
		\SELFL{$B_\perp \in \bigwedge^{q-m}([A]^\perp \cap [B])$ \\ $B_P \in \bigwedge^m ([B_\perp]^\perp \cap [B])$ \\ Extende a $A=0$ ($p$ arbitrário, quaisquer $B_P, B_\perp$ servem) ou $B=0$ ($B_P=1$, $B_\perp=0$)}
\end{definition}


Note that
	\OMITR{$[A]\perp [B_\perp]$, and p. \pageref{eq:P_B A}}
$P_B A = P_{B_P} A = \frac{\inner{B_P,A}}{\|B_P\|^2} B_P$ ($=0$ if $p>q=m$).
	\SELF{So $P_W(V)\subset[B_P]$.
	If $V\not\pperp W$ (implies $p\leq q$) then $P_W(V) = [B_P]$ and $[B_\perp] = V^\perp \cap W$.}

\begin{proposition}\label{pr:contr PO}
	$A \lcontr B = \inner{A,B_P} B_\perp$.
\end{proposition}
\begin{proof}
	As $[A]\perp [B_\perp]$, $A \lcontr (B_P\wedge B_\perp) = (A \lcontr B_P) \wedge B_\perp$.
		\OMIT{\ref{pr:complet orth Leibniz}\ref{it:contr L cperp N}}
	And $A \lcontr B_P = \inner{A,B_P}$ (immediate if $p = m$,
		\OMIT{\ref{pr:contr homog}\ref{it:contr inner}}
	and if $p > m$ both vanish). 
\end{proof}

So, $A\lcontr B$ takes an inner product of $A$ with a subblade of $B$ where it projects,
	\SELF{\ie $P_B [A] \subset [B_P]$} 
contracting this subblade and leaving another orthogonal to it.
Likewise, $B \rcontr A = \inner{A,B_P} B_\perp'$
for $B_\perp' = \hhat{(B_\perp)}{m}$ (so $B = B_\perp' \wedge B_P$).


\begin{definition}\label{df:asymm angle}
	$\Theta_{V,W} = \cos^{-1} \frac{\|P_B A\|}{\|A\|}$ is the \emph{asymmetric angle} of $V=[A]$ with $W=[B]$.
\end{definition}

Formerly called Grassmann angle \cite{Mandolesi_Grassmann},
it is linked to the various products of GA \cite{Mandolesi_Products},
and gives an asymmetric Fubini-Study metric in the Total Grassmannian \cite{Mandolesi_TotalGrassmannian}.
As blade norms (squared, in the complex case) give volumes, $\cos \Theta_{V,W}$ (squared, in the complex case) is a \emph{projection factor} \cite{Mandolesi_Pythagorean} by which $p$-di\-men\-sion\-al ($2p$, in the complex case) volumes in $V$ contract if orthogonally projected on $W$.
We have $\Theta_{V,W} = 0 \Leftrightarrow V\subset W$, 
\SELF{$\Leftarrow$ não vale para $\Theta_{A,B}$, que também requer $A$, $B_P$ em fase}
$\Theta_{V,W} = \frac\pi2 \Leftrightarrow V \pperp W$,
\SELF{vale para $\Theta_{A,B}$}
and if $p=q$ then $\Theta_{V,W} = \Theta_{W,V} = \cos^{-1} \frac{|\inner{A,B}|}{\|A\|\|B\|}$.
	\SELF{and $\Theta_{A,B} = \overline{\Theta_{B,A}}$}
In general, $\Theta_{V,W} \neq \Theta_{W,V}$, reflecting a natural asymmetry between subspaces of different dimensions, linked to that of contractions.

\begin{theorem}\label{pr:characterization 1}
	For nonzero 
		\SELF{Vale trivially se $B=0$ e $p\neq 0$}
	blades $A\in\bigwedge^p X$ and $B\in\bigwedge^q X$, 
	we have
	$A \lcontr B = 0 \Leftrightarrow [A] \pperp [B]$.
	If $A\lcontr B \neq 0$,
	it is the only ($q-p$)-blade such that:
	\begin{enumerate}[i)]
		\item $[A\lcontr B] = [A]^\perp \cap [B]$. \label{it:space contr}
		\item $\| A\lcontr B\| = \|P_B A\|\| B\| = \| A\|\| B\|\cos\Theta_{[A],[B]}$. \label{it:norm}
		\item $(P_B A)\wedge (A\lcontr B)$ has the orientation of $B$.\label{it:compat orient}
	\end{enumerate}
\end{theorem}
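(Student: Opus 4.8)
The plan is to reduce the whole statement to the PO- and OP-factorizations $B = B_P\wedge B_\perp = B_\perp'\wedge B_P$ of $B$ \wrt $A$, since \Cref{pr:contr inner Theta} already computes $A\lcontr B = \inner{A,B_P}\,B_\perp$ and $B\rcontr A = \inner{A,B_P}\,B_\perp'$, while \Cref{pr:PO} and \Cref{pr:B factor PB contr} record the geometry of the factors. First I would settle the equivalence. If $A\pperp B$, then \Cref{pr:contr pperp cperp}\ref{it:pperp contr 0} gives $A\lcontr B = 0$, and \Cref{pr:contr homog}\ref{it:contr left right} transfers this to $B\rcontr A = 0$. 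Conversely, since $\|B_\perp\| = 1$ and $A,B\neq 0$, \Cref{pr:contr inner Theta} shows $A\lcontr B = 0 \Leftrightarrow \inner{A,B_P}=0$; as $|\inner{A,B_P}| = \|A\|\|B\|\cos\Theta_{[A],[B]}$ by \Cref{df:asymmetric angle}, this means $\cos\Theta_{[A],[B]}=0$, which by \Cref{pr:Theta}\ref{it:Theta sigmas} and \Cref{pr:pperp} holds exactly when $p>q$ or some principal angle is $\frac{\pi}{2}$, i.e.\ when $A\pperp B$.

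Next I would assume $A\not\pperp B$, so $\inner{A,B_P}\neq 0$ and both contractions are nonzero, and check (i)--(iii). Property (i) holds because $A\lcontr B$ and $B\rcontr A$ are, by \Cref{pr:contr inner Theta}, nonzero scalar multiples of $B_\perp$ and $B_\perp'$, and since $[B_\perp']=[B_\perp]$ we obtain $[A\lcontr B] = [B\rcontr A] = [B_\perp] = [A]^\perp\cap[B]$ by \Cref{pr:PO}\ref{it:[Bp] [Bperp] p<q not pperp}. Property (ii) follows from $\|A\lcontr B\| = |\inner{A,B_P}|\,\|B_\perp\| = |\inner{A,B_P}| = \|A\|\|B\|\cos\Theta_{[A],[B]}$ together with $\|P_B A\| = \|A\|\cos\Theta_{[A],[B]}$ from \Cref{pr:Theta}\ref{it:Theta proj factor}; the same applies to $B\rcontr A$ since $\|B_\perp'\|=1$. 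Property (iii) is immediate from \Cref{pr:B factor PB contr}, which yields $(P_B A)\wedge(A\lcontr B) = (B\rcontr A)\wedge(P_B A) = \|P_B A\|^2\,B$ with $\|P_B A\|^2 > 0$, so both wedges share the orientation of $B$.

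For uniqueness, let $C$ be any $(q-p)$-blade satisfying (i)--(iii) with the left orientation $(P_B A)\wedge C$. By (i), $[C] = [A\lcontr B]$ has dimension $q-p$, so $C = \lambda\,(A\lcontr B)$ for a nonzero scalar $\lambda$; then (ii) forces $|\lambda|=1$, and since $(P_B A)\wedge C = \lambda\,\|P_B A\|^2\,B$, property (iii) forces $\lambda\,\|P_B A\|^2 > 0$, hence $\lambda>0$, so $\lambda=1$ and $C = A\lcontr B$. The right case is identical using $C\wedge(P_B A)$. The main obstacle is the complex setting: the angle $\Theta_{A,B}$ is complex-valued, so the argument must be routed through $\inner{A,B_P}$ and the real quantities $\cos\Theta_{[A],[B]}$ and $\|P_B A\|^2$ rather than through real trigonometry, and ``orientation'' must be read through its definition ($C=kB$ with $k>0$) so that combining $|\lambda|=1$ with positivity still pins down $\lambda=1$.
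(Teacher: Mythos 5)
Your proof is correct and follows essentially the same route as the paper's: both reduce everything to \Cref{pr:contr inner Theta} combined with Propositions \ref{pr:PO}\ref{it:[Bp] [Bperp] p<q not pperp}, \ref{pr:Theta}\ref{it:Theta proj factor} and \ref{pr:B factor PB contr}, with only a cosmetic difference in that you handle the $\pperp$ equivalence via \Cref{pr:contr pperp cperp}\ref{it:pperp contr 0}, \Cref{pr:Theta}\ref{it:Theta sigmas} and \Cref{pr:pperp} where the paper cites \Cref{pr:Theta}\ref{it:Theta pi2}. Your explicit uniqueness argument (the space fixes $C$ up to a scalar, the norm fixes its modulus, the orientation fixes the phase) is precisely what the paper's terse proof leaves implicit, and it is carried out correctly, including in the complex case.
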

\begin{proof}
		\OMIT{\ref{pr:contr PO}}
	$A\lcontr B = 0 \Leftrightarrow \inner{A,B_P} = 0 \Leftrightarrow P_B A = 0 \Leftrightarrow 
	[A] \pperp [B]$.
		\OMIT{as $A\neq 0$ (p. \pageref{pr:PWB=0})}
		\SELF{$[A] \perp [B] \Rightarrow [A]\pperp [B]$ if $p\neq 0$}
	Otherwise, 
	$[A\lcontr B] = [B_\perp] = [A]^\perp \cap [B]$,
	as $P_B([A]) = [P_B A] = [B_P]$,
	$\| A\lcontr B\| = |\inner{A,B_P}| \, \|B_\perp\| = \|P_B A\|\| B\|$,
	and 
	$(P_B A)\wedge (A\lcontr B) = \frac{|\inner{A,B_P}|^2}{\|B_P\|^2} B$. 
\end{proof}

The right contraction is similar, except that $(B\rcontr A)\wedge (P_B A)$ has the orientation of $B$
\SELF{\ref{it:space contr} allows a nice reformulation of the Gram-Schmidt process using contractions cite[p.\,65]{Rosen2019}.}
(Fig.\,\ref{fig:contraction 2}).
\SELF{Other PO factoriz switches signs of $B_P$, $B_\perp$ and $B_{\perp}'$}
Note that \ref{it:norm} holds for all blades, as $[A] \pperp [B] \Leftrightarrow \Theta_{[A],[B]} = \frac{\pi}{2}$.

\begin{figure}
	\centering
	\begin{subfigure}[b]{0.4\textwidth} 
		\includegraphics[width=\textwidth]{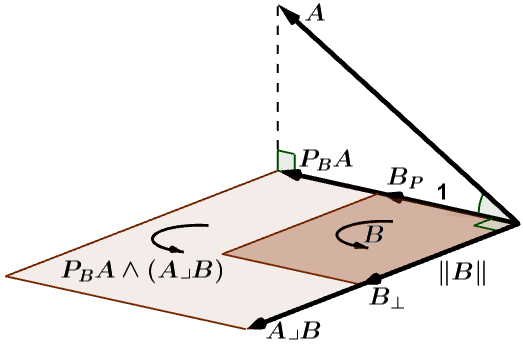}
		\caption{Left contraction $A\lcontr B$}
	\end{subfigure}
	\begin{subfigure}[b]{0.42\textwidth} 
		\includegraphics[width=\textwidth]{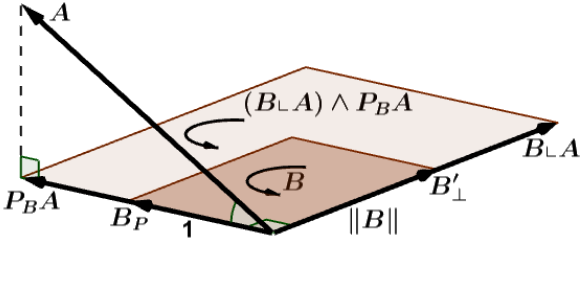}
		\caption{Right contraction $B \rcontr A$}
	\end{subfigure}
	\caption{Contractions remove from a 2-blade $B$ the direction where a vector $A$ projects, leaving a vector orthogonal to it.
	Rectangles are similar when $\|B_P\|=1$.}
	\label{fig:contraction 2}
\end{figure}

%
%
%

\begin{example}\label{ex:contr blade}
	If $A=v_2-2v_3+\im v_4$ and $B=v_{134}$ for an orthonormal basis $(v_1,\ldots,v_4)$ of $\C^4$ then
	$A \lcontr B 
	= 2 v_{14} - \im v_{13}$,
	so $[A]^\perp \cap [B] = [v_1]\oplus[2v_4-\im v_3]$.
	As
	$B_\perp = \frac{A\lcontr B}{\|A\lcontr B\|} = \frac{2v_{14}-\im v_{13}}{\sqrt{5}}$ 
	and $B_P = B \rcontr B_\perp = \frac{-2v_{3}+\im v_{4}}{\sqrt{5}}$
	give a PO factorization,
	$P_B A = -2v_3+\im v_4$
	and
	$(P_B A)\wedge (A\lcontr B) 
	= 5 B$.
	As 
	$\cos \Theta_{[A],[B]} = \frac{\sqrt{5}}{\sqrt{6}}$, 
	areas in the real plane $[A]$ contract by $\frac56$ if orthogonally projected on $[B]$ (each real dimension contracts by $\sqrt{\frac56}$).
	As 6-dimensional volumes of $[B]$ vanish if projected on $[A]$, $\Theta_{[B],[A]} = \frac\pi2$ and $B\lcontr A = 0$.
\end{example}

\subsection{Exterior and interior product operators}\label{sc:Operators}

Some properties are better expressed in terms of the following operators.

\begin{definition}\label{df:ext int products} 
	\emph{(Left) exterior and interior products by $M\in\bigwedge X$} are given, respectively, by $\ext_M (N) = M\wedge N$ and $\iota_M (N) = M\lcontr N$, for $N\in\bigwedge X$.%
	\CITE{A common notation for $\ext_M$ is $\varepsilon_M$, but it might cause confusion with our $\epsilon$'s.}
\end{definition}

Both are linear in $N$.
In $M$, $\ext_M$ is linear and $\iota_M$ is conjugate-linear.

\begin{proposition}
	$\ext_M\ext_N = \ext_{M\wedge N}$ and $\iota_M\iota_N = \iota_{N\wedge M}$, for $M,N \in \bigwedge X$,
\end{proposition}
\begin{proof}
	Follows from associativity of $\wedge$, and \Cref{pr:main tools}\ref{it:wedge contractor}.
\end{proof}

Note the order of $M$ and $N$ in $\iota_{N\wedge M}$.
If $M$ is odd, or a nonscalar blade, then $\ext_M^2 = \iota_M^2=0$, so $\Img \ext_M \subset \ker \ext_M$ and $\Img \iota_M \subset \ker \iota_M$.
	\MAYBE{Dá algo interessante do ponto de vista de holonomia?}

As $\ext_M$ and $\iota_M$ are adjoints, 
$\ker \ext_M = (\Img \iota_M)^\perp$ and $\ker \iota_M = (\Img \ext_M)^\perp$.
Also, $\iota_M \ext_M$ and $\ext_M \iota_M$ are self-adjoint.
	\SELF{and so is $\iota_M+\ext_M$. In real case, $\iota_v+\ext_v$ is left Clifford product by $v\in X$ and a Majorana operator in Physics}

\begin{proposition}
	Let $L,M,N \in \bigwedge X$.
	\begin{enumerate}[i)]
		\item $M\lcontr(M\wedge N)=0 \Leftrightarrow M \wedge N=0$.\label{it:ker iota ext}
		
		\item $M\wedge(M\lcontr N)=0 \Leftrightarrow M \lcontr N=0$.

		\item $L = M \lcontr N \Leftrightarrow L=M \lcontr (M \wedge K)$ for some $K\in \bigwedge X$. \label{it:Im iota ext}
		
		\item $L = M \wedge N \Leftrightarrow L=M \wedge (M \lcontr K)$ for some $K\in \bigwedge X$.
	\end{enumerate}
\end{proposition}
\begin{proof}
	Follows from usual properties of adjoint operators.
		\OMIT{\ref{it:ker iota ext} means $\ker(\iota_M \ext_M) = \ker \ext_M$. To prove, take $\inner{N,\cdot}$. 
		For \ref{it:Im iota ext}, $\Img(\iota_M \ext_M) = (\ker \adj{(\iota_M \ext_M)})^\perp = (\ker (\iota_M \ext_M))^\perp = (\ker \ext_M)^\perp = \Img \iota_M$. Others are similar.}	
\end{proof}

Let $M\wedge \bigwedge V = \{M\wedge N:N\in\bigwedge V\}$,
for $M\in\bigwedge X$ and $V\subset X$.\label{df:wedge bigwedge}

\begin{proposition}\label{pr:image contr}
	$\Img \ext_B = B \wedge \bigwedge([B]^\perp)$ 
	for any blade $B$, and if $B\neq 0$ then
	$\Img \iota_B = \bigwedge ([B]^\perp)$.
\end{proposition}
\begin{proof}
	Given orthonormal bases $(v_1,\ldots,v_p)$ of $[B]$ and $(w_1,\ldots,w_{n-p})$ 
	of $[B]^\perp$ ($p=0$ or $n$ is trivial), $\{v_\ii \wedge w_\jj\}_{\ii\in\II^p,\, \jj\in\II^{n-p}}$ is one for $\bigwedge X$.
	For $B=v_{1\cdots p}$ we have
	$\ext_B(v_\ii \wedge w_\jj) = \delta_{\ii=\emptyset} B\wedge w_\jj$
	and $\iota_B(v_\ii \wedge w_\jj) = \delta_{\ii=1\cdots p} w_\jj$.
\end{proof}

\begin{proposition}\label{pr:triple}
	Let $B$ be a unit blade, and $M\in\bigwedge X$.%
		\CITE{Rosen 2.8.7 dá \ref{it:P B perp} com $B$=vetor e dual, exerc 2.8.9 dá \ref{it:P B perp} com $M$=vetor}
		\SELF{$B$ unit blade: \\ $B\lcontr(B\wedge M) = M \Leftrightarrow M = B \lcontr N$ for $N \in \bigwedge X$ \\ $B\wedge(B\lcontr M) = M \Leftrightarrow M = B \wedge N$ for $N \in \bigwedge X$.}
	\begin{enumerate}[i)]
		\item $B\lcontr(B\wedge M) =\PP_{\Img \iota_B} M = P_{[B]^\perp} M$.\label{it:P B perp}%
		\SELFL{$= (M\wedge B)\rcontr B$; \\
			$(M\lcontr B)\lcontr B = P_B \hhat{M}{(p+1)}$}
		\item $B\wedge(B\lcontr M) = \PP_{\Img \ext_B} M$. \label{it:BBM}
	\end{enumerate}
\end{proposition}
\begin{proof}\OMITL{\ref{pr:image contr}}
	\emph{(\ref{it:P B perp})} 
	If $M \in \Img \iota_B = \bigwedge ([B]^\perp)$,
	\Cref{pr:complet orth Leibniz}\ref{it:contr L cperp N} gives $B\lcontr(B\wedge M) = M$.
	If $M \in (\Img \iota_B)^\perp = \ker \ext_B$ then
	$B\lcontr(B\wedge M) = 0$.
	%
	\emph{(\ref{it:BBM})} 
	If $M \in \Img \ext_B$ then $M = B \wedge N$ for $N \in
	\Img \iota_B$, 
	so $B\wedge(B\lcontr M) = B\wedge(B\lcontr (B \wedge N)) =  B \wedge N = M$.	
	If $M \in (\Img \ext_B)^\perp = \ker \iota_B$ then
	$B\wedge(B\lcontr M) = 0$.
\end{proof}

In \cite{Mandolesi_Contractions2}, $\ext_B$ and $\iota_B$ are linked to multi-fermion creation and annihilation operators,
and this result lets us interpret $\iota_B \ext_B$ and $\ext_B \iota_B$ as vacancy and occupancy operators, related to the quantum number operator.

Some cases of \ref{it:BBM} are worth mentioning.
For a unit $v\in X$ and $M\in\bigwedge X$, $v\wedge(v\lcontr M) = M-P_{[v]^\perp} M$,
	\CITE{Rosen 2.8.7 p/vetor e dual}
	\OMIT{\ref{pr:main tools}\ref{it:Leibniz vector grade inv}, \ref{pr:triple}\ref{it:P B perp}}
	\SELF{$=(\Id_{\bigwedge X} - P_{[v]^\perp})M \neq (\Id_X - P_{[v]^\perp})M$ (outermorphism functor not linear); \\	$\bigwedge X = (v \wedge \bigwedge X) \oplus \bigwedge ([v]^\perp)$}
and for $w \in X$, $v\wedge(v\lcontr w) = P_v w$.
	\OMIT{$v\wedge(v\lcontr w) = \inner{v,w} v = P_v w$}	
With a PO factorization 
	\SELFR{defini PO p/$A\neq 0$, estende}
$A_P\wedge A_\perp$ of a $q$-blade $A$ \wrt a unit $p$-blade $B$ (with $p\leq q$), 
one obtains $B\wedge(B\lcontr A) = P_{B\wedge A_\perp} A$ (Fig.\,\ref{fig:projections}).%
	\OMIT{$B\wedge(B\lcontr A) = \inner{B,A_P} B\wedge A_\perp = (P_B A_P) \wedge A_\perp = P_{B\wedge A_\perp} A$, \ref{pr:contr PO}}
	\SELF{$= \|A\|\cos(\Theta_{B,A})\, B\wedge A_\perp$ (oriented $\Theta$), if $p\leq q$ or $p>q\neq 0$. \\ 		$\inner{B,A_P} = \|A\|\cos\Theta_{B,A}$. \\ 		$B\wedge(B\lcontr A) = A$ if $B \subset A$}
The geometric relevance of $[B\wedge A_\perp]$ is 
that, among all $q$-dimensional subspaces 
$V\supset [B]$,
it is the closest one to $[A]$, in the sense that it minimizes $\Theta_{V,[A]}$ \cite{Mandolesi_Grassmann}.
	\CITE{3.8 e 3.9 da v6, 11/jan/21}

\begin{figure}[t]
	\centering
	\includegraphics[width=.5\textwidth]{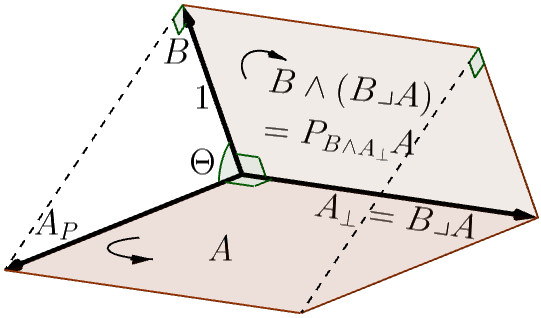}
	\caption{$B\wedge(B\lcontr A) = P_{B\wedge A_\perp} A$, if $\|B\|=1$. Among all planes containing $B$, that of $B\wedge A_\perp$ is the closest one to $A$ (it forms the smallest angle $\Theta$). The PO-factorization was chosen with $\|A_P\| = 1/\|P_A B\|$, so $\|A_\perp\|=\|B\lcontr A\|$.}
	\label{fig:projections}
\end{figure}


\begin{corollary}\label{pr:M=BBM}
	Let $B \neq 0$ be a blade, and $M\in\bigwedge X$.
	\OMIT{\ref{pr:triple}}
	\begin{enumerate}[i)]
		\item If $M = B \lcontr N$ for $N \in \bigwedge X$ then $M = B\lcontr \frac{B\wedge M}{\|B\|^2}$. \label{it:M = B contr B wedge M}
		\item If $M = B \wedge N$ for $N \in \bigwedge X$ then $M = B \wedge \frac{B \lcontr M}{\|B\|^2}$. \label{it:M = B wedge B contr M}
	\end{enumerate}
\end{corollary}

\begin{corollary}
	The restricted maps
	\begin{tikzcd}
		\bigwedge ([B]^\perp) \arrow[r,shift left=1,"\ext_B"] & B \wedge \bigwedge ([B]^\perp)  \arrow[l,shift left=1,"\iota_B"] 
	\end{tikzcd} 
	are mutually inverse isometries, for a unit blade $B$.
		\OMITL{By \ref{pr:image contr}, \ref{pr:triple}, mutually inverse adjoints, hence isometries}
\end{corollary}

Fig.\,\ref{fig:operators} shows how $e_B$ and $\iota_B$ act in $\bigwedge X$.

\begin{figure}
	\centering
	\includegraphics[width=0.3\linewidth]{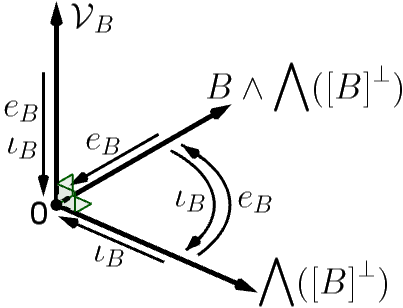}
	\caption{If $B$ is a unit nonscalar blade, $B \wedge \bigwedge([B]^\perp)$ and $\bigwedge([B]^\perp)$ are orthogonal, have orthogonal complement $\mathcal{V}_B = \sum_A \bigl(A \wedge \bigwedge([B]^\perp) \bigr)$, with a sum over nonscalar proper subblades $A$ of $B$, and $e_B$ and $\iota_B$ act as above, with curved arrows being inverses.}
	\label{fig:operators}
\end{figure}

For $0 \neq v \in X$, 
	\SELF{$\mathcal{V}_v = \{0\}$}
$\ker \ext_v = \Img \ext_v$ and $\ker \iota_v = \Img \iota_v$,
by \Cref{pr:wedge contr ker image},
so we have exact sequences
$0 \xrightarrow{\ext_v} \bigwedge\!^0 X \xrightarrow{\ext_v} \bigwedge\!^1 X \xrightarrow{\ext_v} \cdots  \xrightarrow{\ext_v} \bigwedge\!^n X \xrightarrow{\ext_v}  0$ 
and
$0 \xleftarrow{\iota_v} \bigwedge\!^0 X \xleftarrow{\iota_v} \bigwedge\!^1 X \xleftarrow{\iota_v}  \cdots  \xleftarrow{\iota_v} \bigwedge\!^n X  \xleftarrow{\iota_v}  0$.
	\SELFL{Self-duality of Koszul complexes says the two complexes are isomorphic. Isomorfismo seria dado pela star, by (pr:Hodge wedge contr)? Acho que teria que ser complexos do left exterior com a right contraction, e vice-versa. E se compor $\star$ com $\hat{\,}$\ ?}
If $\|v\|=1$, $\iota_v \ext_v + \ext_v \iota_v = \mathds{1}$.
	\OMIT{\ref{pr:main tools}\ref{it:Leibniz vector grade inv}, $v \lcontr (v \wedge M) = M - v \wedge (v \lcontr M)$}

\subsection{Higher order Leibniz rule}\label{sc:General Leibniz Rule}

Now we obtain higher order versions of the graded Leibniz rule and of its adjoint.
In \cite{Mandolesi_Contractions2} we interpret them in terms of supercommutators.

\begin{theorem}\label{pr:generalized Leibniz}
	For $v_1,\ldots,v_p\in X$ and $M,N\in\bigwedge X$,
	\CITER{cite[p.602]{Bourbaki1998}, remark (2): abstract formula for symmetric (?) algebras, in terms of coproduct $c(x)$. For anti-symmetric algebras, $c(B) = \sum_\ii \epsilon_{\ii\ii'} B_{\ii} \otimes B_{\ii'}$ (ex.7, p.576-577), so adding  grade involutions the formula could be adapted to give mine.}
	\SELF{Other formulation: \\
		$v_{1\cdots p}\lcontr(M\wedge N) = \sum_{\ii\in\II^p} \epsilon_{\ii\ii'}  \hhat{(v_\ii \lcontr M)}{|\ii'|} \wedge(v_{\ii'}\lcontr N)$}
	\begin{align}
		v_{1\cdots p}\lcontr(M\wedge N) &= \sum_{\ii\in\II^p} \epsilon_{\ii\ii'} (v_{\ii'}\lcontr \hhat{M}{|\ii|})\wedge(v_\ii\lcontr N), \text{ and} \label{eq:Leibniz generalizada} \\
		v_{1\cdots p} \wedge (M \lcontr N) &= \sum_{\ii\in\II^p} \epsilon_{\ii\ii'} (\hhat{M}{|\ii'|} \rcontr v_{\ii}) \lcontr (v_{\ii'} \wedge N). \label{eq:v1p wedge M contr N} 
	\end{align}
\end{theorem}
\begin{proof}
	\Cref{pr:main tools}\ref{it:Leibniz vector grade inv} gives \eqref{eq:Leibniz generalizada} for $p=1$ and, assuming it for $v_{1\cdots (p-1)}$, also
	\OMIT{\ref{pr:main tools}\ref{it:wedge contractor}}
	$v_{1\cdots p}\lcontr (M\wedge N) 
	= v_p\lcontr\big(v_{1\cdots (p-1)}\lcontr(M\wedge N)\big) 
	= v_p\lcontr \sum_{\ii\in\II^{p-1}} \epsilon_{\ii\ii'} (v_{\ii'}\lcontr \hhat{M}{|\ii|}) \wedge (v_\ii\lcontr N)
	= I + II$, 
	where
	\begin{align*}
		I &= \sum_{\ii\in\II^{p-1}} \epsilon_{\ii\ii'} \big(v_p\lcontr (v_{\ii'}\lcontr \hhat{M}{|\ii|})\big) \wedge (v_{\ii}\lcontr N)
		\\
		&= \sum_{\ii\in\II^{p-1}} \epsilon_{\ii\ii'} \big(v_{\ii' p}\lcontr\hhat{M}{|\ii|} \big) \wedge (v_{\ii}\lcontr N) 
		\\
		&= \sum_{\jj\in\II^p,\, p \notin \jj} \epsilon_{\jj\jj'} (v_{\jj'}\lcontr \hhat{M}{|\jj|}) \wedge (v_{\jj}\lcontr N),  
	\end{align*}
	for $\jj=\ii$ and $\jj'=\ii' p$,
	since $\epsilon_{\jj\jj'} = \epsilon_{\ii \ii' p} = \epsilon_{\ii\ii'}$,
	and 
	\begin{align*}
		II &= \sum_{\ii\in\II^{p-1}} \epsilon_{\ii\ii'} (v_{\ii'} \lcontr \hhat{M}{|\ii|})\hat{\,} \wedge \big(v_p\lcontr(v_{\ii}\lcontr N)\big) 
		\\
		&= \sum_{\ii\in\II^{p-1}} \epsilon_{\ii\ii'} \cdot (-1)^{|\ii'|} (v_{\ii'} \lcontr \hhat{M}{(|\ii|+1)}) \wedge (v_{\ii p}\lcontr N) 
		\\
		&= \sum_{\jj\in\II^p,\,p\in \jj} \epsilon_{\jj\jj'} (v_{\jj'}\lcontr \hhat{M}{|\jj|}) \wedge (v_{\jj}\lcontr N),
	\end{align*}
	for $\jj=\ii p$ and $\jj'=\ii'$, 
	since $\epsilon_{\jj\jj'} = \epsilon_{\ii p \ii'} = (-1)^{|\ii'|} \epsilon_{\ii\ii'}$. 
	
	\Cref{pr:v wedge (M contr N)} gives \eqref{eq:v1p wedge M contr N} for $p=1$ and, assuming it for $v_{1\cdots (p-1)}$, also
	$v_{1\cdots p} \wedge (M \lcontr N) 
	= v_p \wedge \hat{v}_{1\cdots (p-1)} \wedge (M \lcontr N) 
	= v_p \wedge \sum_{\ii\in\II^{p-1}} \epsilon_{\ii\ii'} (\hhat{M}{|\ii'|} \rcontr \hat{v}_{\ii}) \lcontr (\hat{v}_{\ii'} \wedge N)
	= I + II$, 
	where%
		\SELFR{$I = \sum_{\ii\in\II^{p-1}} \epsilon_{\ii\ii'} \cdot (-1)^{|\ii'|} (\hat{v}_{\ii p} \lcontr \hhat{M}{p}) \lcontr (v_{\ii'} \wedge N)$
		\\
		$= \sum_{\jj\in\II^p,\, p \in \jj} \epsilon_{\jj\jj'} (\hat{v}_{\jj} \lcontr \hhat{M}{p}) \lcontr (v_{\jj'} \wedge N)$,
		for $\jj=\ii p$ and $\jj'=\ii'$, since $\epsilon_{\jj\jj'} = \epsilon_{\ii p \ii'} = (-1)^{|\ii'|} \epsilon_{\ii\ii'}$, 
		\\
		and 
		\\ 
		$II = \sum_{\ii\in\II^{p-1}} \epsilon_{\ii\ii'} (\hat{v}_\ii \lcontr \hhat{M}{p}) \lcontr (v_{\ii' p} \wedge N)$
		\\
		$= \sum_{\jj\in\II^p,\, p \notin \jj} \epsilon_{\jj\jj'} (\hat{v}_\jj \lcontr \hhat{M}{p}) \lcontr (v_{\jj'} \wedge N),$
		for $\jj=\ii$ and $\jj'=\ii' p$,
		since $\epsilon_{\jj\jj'} = \epsilon_{\ii \ii' p} = \epsilon_{\ii\ii'}$.}
	\begin{align*}
		I &= \sum_{\ii\in\II^{p-1}} \epsilon_{\ii\ii'} \big((\hhat{M}{|\ii'|} \rcontr \hat{v}_{\ii}) \rcontr v_p \big) \lcontr (\hat{v}_{\ii'} \wedge N), \text{ and} 
		\\
		II &= \sum_{\ii\in\II^{p-1}} \epsilon_{\ii\ii'} (\hhat{M}{|\ii'|} \rcontr \hat{v}_{\ii})\hat{\,} \lcontr (v_p \wedge \hat{v}_{\ii'} \wedge N),
	\end{align*}
	which are then developed as above.
\end{proof}

The higher order graded Leibniz rule \eqref{eq:Leibniz generalizada} shows that contraction by a $p$-blade is a graded derivation of order $p$.
Writing in \eqref{eq:Leibniz generalizada} and \eqref{eq:v1p wedge M contr N} the terms for $\ii=\emptyset$ and $\ii=1\cdots p$, we find
\begin{align*}
	v_{1\cdots p}\lcontr(M\wedge N) &= (v_{1\cdots p} \lcontr M) \wedge N + \cdots + \hhat{M}{p} \wedge (v_{1\cdots p} \lcontr N), \text{ and}
	\\
	v_{1\cdots p} \wedge (M \lcontr N) &= \hhat{M}{p} \lcontr (v_{1\cdots p} \wedge N) + \cdots + (M \rcontr v_{1 \cdots p}) \lcontr N. 
\end{align*}
Though unobvious, \eqref{eq:v1p wedge M contr N} is equivalent to the adjoint of \eqref{eq:Leibniz generalizada},
$M \lcontr (v_{1\cdots p} \wedge N) = \sum_{\ii\in\II^p} \epsilon_{\ii\ii'}\, v_{\ii}\wedge \bigl((v_{\ii'} \lcontr \hhat{M}{|\ii|}) \lcontr N \bigr) = (v_{1\cdots p} \lcontr M) \lcontr N + \cdots + v_{1\cdots p} \wedge (\hhat{M}{p} \lcontr N)$,
and vice-versa.
	\SELF{$M \wedge (v_{1\cdots p} \lcontr N) = \sum_{\ii\in\II^p} \epsilon_{\ii\ii'}\, v_{\ii'} \lcontr \bigl((\hhat{M}{|\ii'|} \rcontr v_{\ii}) \wedge N \bigr) = v_{1\cdots p} \lcontr (\hhat{M}{p} \wedge N) + \cdots + (M \rcontr v_{1\cdots p}) \wedge N$}

\begin{example}\label{ex:generalized Leibniz}
	Let $v_1,v_2 \in X$ and $M,N\in\bigwedge X$. 
	By \eqref{eq:Leibniz generalizada},
	$v_{12}\lcontr(M\wedge N) 
	= (v_{12}\lcontr M)\wedge N  
	+ (v_2\lcontr \hat{M})\wedge(v_1\lcontr N) 
	- (v_1\lcontr \hat{M})\wedge(v_2\lcontr N)  
	+ M\wedge(v_{12}\lcontr N)$.
		\SELF{adjoint $M \lcontr (v_{12} \wedge N) = (v_{12} \lcontr M) \lcontr N + v_1 \wedge \big( (v_2 \lcontr \hat{M}) \lcontr N \big) - v_2 \wedge \big( (v_1 \lcontr \hat{M}) \lcontr N \big)	+ v_{12} \wedge (M \lcontr N)$}
	By \eqref{eq:v1p wedge M contr N},
	$v_{12} \wedge (M \lcontr N) 
	= M \lcontr (v_{12} \wedge N) 
	+ (\hat{M} \rcontr v_1) \lcontr (v_2 \wedge N)
	- (\hat{M} \rcontr v_2) \lcontr (v_1 \wedge N)
	+ (M \rcontr v_{12}) \lcontr N$.
		\SELF{adjoint $M \wedge (v_{12} \lcontr N) = v_{12} \lcontr (M \wedge N) + v_2 \lcontr \big( (\hat{M} \rcontr v_1) \wedge N \big)	- v_1 \lcontr \big( (\hat{M} \rcontr v_2) \wedge N \big) + (M \rcontr v_{12}) \wedge N$}
\end{example}

For more complex calculations one can use 
\SELFR{also $\hhat{v_\ii}{|\ii'|} = (-1)^{|\ii|\cdot(p+1)} v_\ii$}
\Cref{pr:epsilon}\ref{it:epsilon i},
noting that $(-1)^{\frac{|\ii|(|\ii|+1)}{2}}$ follows the pattern $+--+$ for $|\ii|\!\!\mod 4 = 0,1,2,3$,
\SELF{Same as Cliff conj.}
and $(-1)^{\|\ii\|} = (-1)^{\mathrm{odd}(\ii)}$, with $\mathrm{odd}(\ii) =$ number of odd indices in $\ii$.

\begin{example}\label{ex:generalized Leibniz 2}
	For $v_1,\ldots,v_4 \in X$ and $M,N\in\bigwedge X$, \eqref{eq:Leibniz generalizada} gives
	\begin{align*}
		v_{1234}\lcontr( M\wedge N) 
		= &+ (v_{1234}\lcontr M)\wedge N
		\\
		&- \bigl( 
		- (v_{234}\lcontr \hat{M})\wedge(v_1\lcontr N)
		+ (v_{134}\lcontr \hat{M})\wedge(v_2\lcontr N)
		\\&\phantom{- \bigl( }
		- (v_{124}\lcontr \hat{M})\wedge(v_3\lcontr N)
		+ (v_{123}\lcontr \hat{M})\wedge(v_4\lcontr N)
		\bigr)
		\\
		&- \bigl( 
		-(v_{34}\lcontr M)\wedge(v_{12}\lcontr N)
		+ (v_{24}\lcontr M)\wedge(v_{13}\lcontr N) 
		\\&\phantom{- \bigl( }
		- (v_{23}\lcontr M)\wedge(v_{14}\lcontr N)
		- (v_{14}\lcontr M)\wedge(v_{23}\lcontr N)
		\\&\phantom{- \bigl( }
		+ (v_{13}\lcontr M)\wedge(v_{24}\lcontr N)
		- (v_{12}\lcontr M)\wedge(v_{34}\lcontr N) 
		\bigr) 
		\\
		&+ \bigl( 
		+ (v_4\lcontr \hat{M})\wedge(v_{123}\lcontr N)
		- (v_3\lcontr \hat{M})\wedge(v_{124}\lcontr N)
		\\&\phantom{+ \bigl( }
		+ (v_2\lcontr \hat{M})\wedge(v_{134}\lcontr N) 
		- (v_1\lcontr \hat{M})\wedge(v_{234}\lcontr N)	
		\bigr) 
		\\
		&+ M\wedge(v_{1234}\lcontr N).
	\end{align*}
\end{example}

\subsection{Outermorphisms and contractions}%

Let $T:X\rightarrow Y$ be a linear map into a Euclidean or Hermitian space $Y$ (same as $X$).
The outermorphism of its adjoint $\adj{T}:Y\rightarrow X$\label{df:adjoint}
is the adjoint of its outermorphism, \ie $\inner{M,\adj{T}N} = \inner{TM,N}$ for $M\in\bigwedge X$ and $N\in\bigwedge Y$.
\CITE{[p.109]{Dorst2007}}
Likewise for its inverse, if it exists.
	\SELF{If $T$ is a vector space isomorphism, its outermorphism is an exterior algebra isomorphism (\wrt $\wedge$ only)}
We say $T$ is an \emph{isometry}
	\SELF{Não requer invertibilidade, mas implies injectivity}
if $\inner{Tx,Ty} = \inner{x,y}$ for all $x,y \in X$,
in which case $\adj{T} T = \Id$,
and $\inner{TM,TN} = \inner{M,N}$ for all $M,N \in \bigwedge X$.
By convention, maps take precedence over contractions: $TM\lcontr TN$ means $(TM)\lcontr (TN)$.

%

\begin{proposition}\label{pr:T contr adj}
	$T(\adj{T}M\lcontr N) = M\lcontr TN$, for $M\in\bigwedge Y$ and $N\in\bigwedge X$.\SELF{$\adj{T}(TN\lcontr M) = N\lcontr \adj{T}M$}\CITE{A similar result is given in \cite[Prop.\,2.7.1]{Rosen2019} in terms of adjoint maps between dual spaces.}
\end{proposition}
\begin{proof}
	For $L\in\bigwedge Y$, 
	$\inner{L,M\lcontr TN} = \inner{M \wedge L, TN} = \inner{\adj{T}(M \wedge L), N} = \inner{\adj{T} M \wedge \adj{T} L, N} = \inner{\adj{T} L, \adj{T} M \lcontr N} = \inner{L, T(\adj{T} M \lcontr N)}$.
\end{proof}



\begin{corollary}
	If $T$ is invertible then $T(M\lcontr N) =  (\adj{T})^{-1} M  \lcontr T N$, for $M,N\in\bigwedge X$.
	\OMIT{$M=\adj{T} L$ then \ref{pr:T contr adj}}
\end{corollary}

\begin{corollary}\label{pr:T contr}
	$T$ is an isometry $\Leftrightarrow T(M\lcontr N) = TM\lcontr TN$ for all $M,N\in\bigwedge X$. 
\end{corollary}
\begin{proof}
	($\Rightarrow$) $\adj{T} = T^{-1}$, so $M = \adj{T} L$ for $L= TM$.
	\OMIT{\ref{pr:T contr adj}}
	($\Leftarrow$) For $x,y\in X$, $\inner{Tx,Ty} = Tx \lcontr Ty = T(x\lcontr y) = T\inner{x,y} = \inner{x,y}$.
\end{proof}

\subsection{Determinant formulas}

For blades $A = v_1\wedge\cdots\wedge v_p$ and $B = w_1\wedge\cdots\wedge w_q$,
let $\mathbf{A}_{p \times q} = \big(\inner{v_i,w_j}\big)$,
$\mathbf{B}_{q \times q} = \big(\inner{w_i,w_j}\big)$ and
$\mathbf{M}_{(p+q)\times(p+q)}=\begin{psmallmatrix}
	\mathbf{0}  & \mathbf{A}  \\ 
	\mathbf{A}^\dagger & \mathbf{B}
\end{psmallmatrix}$,
where $^\dagger$ is the conjugate transpose.
Also, let $\mathbf{W}_{n \times q}$ have as columns the $w_j$'s decomposed in an arbitrary basis $(u_1,\ldots,u_n)$.%
\SELF{does not need orthogonality}
The following expansions can be useful when the determinants can be computed efficiently (\eg if they are sparse).

\begin{proposition}\label{pr:det expansion w}
	If $p\leq q$ then
	\begin{equation}\label{eq:det}
		A\lcontr B 
		= \sum_{\jj\in\II_p^q} \epsilon_{\jj\jj'} \det (\mathbf{A}_\jj) \,w_{\jj'}
		= \sum_{\ii\in\II_{q-p}^n}	\det
		\begin{psmallmatrix}
			\mathbf{A} \\
			\mathbf{W}_\ii
		\end{psmallmatrix}
		u_\ii,
	\end{equation}
	where  $\mathbf{A}_\jj$ 
	\OMIT{$p\times p$}
	is the submatrix of $\mathbf{A}$ formed by the columns with indices in $\jj$,
	and $\mathbf{W}_\ii$ is the submatrix of $\mathbf{W}$ formed by the lines with indices in $\ii$.
\end{proposition}
\begin{proof}
	The first equality is \Cref{pr:main tools}\ref{it:H contr ws}, as $\det \mathbf{A}_\jj = \inner{A,w_\jj}$.
	The other follows by Laplace expansion of
	$\det
	\begin{psmallmatrix}
		\mathbf{A} \\
		\mathbf{W}_\ii
	\end{psmallmatrix}$
	\wrt the first $p$ rows,
	as if $w_j = \sum_{i=1}^n \lambda_{ij} u_i$ then
	$w_{\jj'} = \sum_{\ii\in\II_{q-p}^n} \lambda_{\ii\jj'} u_\ii$ 
	for 
	$\lambda_{\ii\jj'} = \det(\lambda_{ij})_{i\in\ii,j\in\jj'}$.
\end{proof}


\begin{proposition}\label{pr:norm contr det}
	$\|A\lcontr B\| = \sqrt{|\det\mathbf{M}|}$.
\end{proposition}
\begin{proof}
	If $p>q$, Laplace expansion
	\CITE{cite{Muir2003}, p.80} 
	\wrt the first $p$ lines gives $\det \mathbf{M} = 0$.
	If $p\leq q$, 
	and $\mathbf{N}_{\jj'}= (\mathbf{A}^\dagger \ \, \mathbf{B}_{\jj'})$ 
	\OMIT{$q\times q$}
	is the submatrix of $\mathbf{M}$ formed by $\mathbf{A}^\dagger$ and the columns of $\mathbf{B}$ with indices not in $\jj$,
	the same expansion and \eqref{eq:det} give
	$\|A\lcontr B\|^2 = \inner{B,A\wedge(A\lcontr B)} = \sum_{\jj \in \II_p^q} \epsilon_{\jj\jj'} \det (\mathbf{A}_\jj)\, \inner{B,A\wedge w_{\jj'}} =
	\sum_{\jj \in \II_p^q} \epsilon_{\jj\jj'} \det \mathbf{A}_\jj \cdot \det \mathbf{N}_{\jj'} = (-1)^p \det \mathbf{M}$.
	\OMIT{$(-1)^p$ aparece pois, depois que passar a $p$ colunas $\jj$ para o início de $\mathbf{A}$, ainda tem que passá-las para antes do $\mathbf{0}$}
\end{proof}

If $B \neq 0$,
$\|A\lcontr B\| = \sqrt{\det \mathbf{B} \cdot \det(\mathbf{A} \mathbf{B}^{-1}\mathbf{A}^\dagger)}$,
by Schur's determinant identity.
\CITE{cite{Brualdi1983} \\ $\det(-\mathbf{A}\mathbf{B}^{-1}\mathbf{A}^\dagger) = (-1)^p \det(\mathbf{A}\mathbf{B}^{-1}\mathbf{A}^\dagger)$}
In \cite{Mandolesi_TotalGrassmannian}, we use it to compute an asymmetric Fubini-Study metric.

\begin{example}\label{ex:contr_det}\OMIT{Cálculos no arquivo Geogebra}
	Let $v_1,v_2,w_1,w_2,w_3 \in \R^3$ have
	$\mathbf{A} = 
	\left(\begin{smallmatrix}
		-1 & 0 & 1\\
		0 & -2 & 2
	\end{smallmatrix}\right)$, 
	and
	$\mathbf{W} = 
	\left(\begin{smallmatrix}
		1 & 1 & 0 \\
		0 & 2 & 1 \\
		1 & 0 & 3
	\end{smallmatrix}\right)$
	in a basis $\beta=(u_1,u_2,u_3)$.
	By \eqref{eq:det},
	$v_{12}\lcontr w_{123} = 
	\left|\begin{smallmatrix}
		-1 & 0 & 1\\
		0 & -2 & 2\\
		1 & 1 & 0
	\end{smallmatrix}\right| u_1 +
	\left|\begin{smallmatrix}
		-1 & 0 & 1 \\
		0 & -2 & 2 \\
		0 & 2 & 1
	\end{smallmatrix}\right| u_2 +
	\left|\begin{smallmatrix}
		-1 & 0 & 1 \\
		0 & -2 & 2 \\
		1 & 0 & 3
	\end{smallmatrix}\right| u_3
	= 4u_1 + 6u_2 + 8u_3$,
	same as expanding the $w$'s in
	$v_{12}\lcontr w_{123} = 
	\left|\begin{smallmatrix}
		-1 & 0  \\
		0 & -2  
	\end{smallmatrix}\right| w_3 -
	\left|\begin{smallmatrix}
		-1 & 1  \\
		0 & 2 
	\end{smallmatrix}\right| w_2 +
	\left|\begin{smallmatrix}
		0 & 1  \\
		-2 & 2 
	\end{smallmatrix}\right| w_1$.
	With $\mathbf{A} = (-1 \ 0 \ 1)$,
	$v_1\lcontr w_{123} = 
	\left|\begin{smallmatrix}
		-1 & 0 & 1 \\
		1 & 1 & 0 \\
		0 & 2 & 1
	\end{smallmatrix}\right| u_{12} +
	\left|\begin{smallmatrix}
		-1 & 0 & 1 \\
		1 & 1 & 0 \\
		1 & 0 & 3
	\end{smallmatrix}\right| u_{13} +
	\left|\begin{smallmatrix}
		-1 & 0 & 1 \\
		0 & 2 & 1 \\
		1 & 0 & 3
	\end{smallmatrix}\right| u_{23}
	= u_{12} - 4 u_{13} - 8 u_{23}$,
	same as
	$v_1\lcontr w_{123} = -1 w_{23} - 0 w_{13} + 1 w_{12}$.
	If $\beta$ is orthonormal, so $\mathbf{B} = \adj{\mathbf{W}} \mathbf{W}$, \Cref{pr:norm contr det} gives $\|v_{12}\lcontr w_{123}\| = \sqrt{116}$ and $\|v_1\lcontr w_{123}\|=9$.
\end{example}

\subsection{Clifford product and contractions}

In this section, $X$ is real%
\footnote{Complex Clifford algebras fail to reflect complex geometry: $vw \neq \inner{v,w} + v \wedge w$, since $vw$ is bilinear but $\inner{v,w}$ is sesquilinear. As complex numbers can be represented in real Clifford algebras, one can use these, but this is not always convenient.}.
Its \emph{Clifford Geometric Algebra} \cite{Dorst2007,Hestenes1984,Rosen2019} is $\bigwedge X$ with a bilinear associative \emph{Clifford geometric product} $MN$ for $M,N\in \bigwedge X$, which has $vv=\|v\|^2$ for $v\in X$, and $v_1v_2\cdots v_p = v_1 \wedge v_2 \wedge \cdots \wedge v_p$ for orthogonal $v_1,\ldots,v_p \in X$. 
So, $vw = \inner{v,w} + v \wedge w$ for $v,w \in X$.
For an orthonormal basis $(v_1,\ldots,v_n)$, as $v_i^2=1$ and $v_i v_j = -v_j v_i$ if $i\neq j$, we have
$v_\ii v_\jj = (-1)^{\pairs{\ii}{\jj}} v_{\ii \triangle \jj}$ for $\ii,\jj\in\II^n$.%
\OMIT{Each $v_{i_k}$, starting from last one, jumps over all $v_j$ with $j<i_k$, so we have $|\ii>\jj|$ transpositions. If $i_k$ was already in $\jj$, it cancels out as $v_{i_k}^2 = 1$. The indices remaining are $\ii\triangle\jj$.}\SELF{$=\epsilon_{\ii\jj} v_{\ii\cup\jj} = v_\ii \wedge v_\jj$ if $\ii,\jj$ disjoint}\CITE{cite[p.76]{Rosen2019}}
For $G \in \bigwedge^p X$ and $H \in \bigwedge^q X$, 
$GH$ can have homogeneous components of grades $|q-p|,|q-p|+2,\ldots,p+q-2,p+q$, 
with $\comp{GH}{p+q} = G\wedge H$.
In a \emph{versor} $M=v_1 v_2\cdots v_p$, for $v_1,\ldots,v_p \in X$, all components have the parity of $p$, so $\hat{M} = (-1)^p M$ and $\check{M} = (-1)^{p(n+1)} M$, but $\tilde{M} \neq (-1)^{\frac{p(p-1)}{2}} M$.
\SELF{$\tilde{M}=v_p\cdots v_2 v_1$}
Blades are versors, as they can be factored into orthogonal vectors.
Using an orthonormal basis, for $L,M,N\in \bigwedge X$ one finds
$(MN)\hat{\,} = \hat{M}\hat{N}$, 
$(MN)\check{\,} = \check{M}\check{N}$, 
$(MN)\tilde{\,} = \tilde{N}\tilde{M}$,
\OMIT{Let $p=|\ii|$, $q=|\jj|$, $r=|\ii \cap \jj|$. \\ 
	$(v_\ii v_\jj)\tilde{\,} = (-1)^{\pairs{\ii}{\jj}} \tilde{v}_{\ii \triangle \jj}$ \\
	$= (-1)^{\pairs{\jj}{\ii} + pq -r} \cdot (-1)^{\frac{(p+q-2r)(p+q-2r-1)}{2}} v_{\ii \triangle \jj}$ \\ 
	$= (-1)^{\frac{p(p-1)}{2} + \frac{q(q-1)}{2}} v_\jj v_\ii$ \\
	$= \tilde{v}_\jj \tilde{v}_\ii$} 
$\inner{M,N} = \comp{\tilde{M}N}{0}$,
and 
$\inner{L,MN} = \inner{\tilde{M}L,N} =\inner{L\tilde{N},M}$.%
\OMIT{$\inner{L,MN} = \comp{\tilde{L}MN}{0} = \comp{(\tilde{M}L)\tilde{\,}N}{0} = \inner{\tilde{M}L,N}$}
The mirror principle holds with Clifford products.

\begin{proposition}
	$G \lcontr H = \comp{\tilde{G}H}{q-p}$ and $G\rcontr H = \comp{G\tilde{H}}{p-q}$, for $G \in \bigwedge^p X$ and $H \in \bigwedge^q X$.
\end{proposition}
\begin{proof}
	For $F \in \bigwedge^{q-p} X$, 
	$\inner{F,G\lcontr H} = \inner{G\wedge F,H} = \inner{\comp{GF}{q},H} = \inner{GF,H} = \inner{F,\tilde{G}H} = \inner{F,\comp{\tilde{G}H}{q-p}}$.
	\OMITL{elements of different grades are orthogonal}
	Likewise for $G\rcontr H$.
\end{proof}


\begin{proposition}\label{pr:contr geom}
	Let $v,w_1,\ldots,w_q \in X$ and $M,N \in \bigwedge X$.
	\begin{enumerate}[i)]
		\item $vM = v \lcontr M + v \wedge M$. \label{it:vM}
		\item $v \lcontr M = \frac{vM - \hat{M}v}{2}$ and $v \wedge M = \frac{vM + \hat{M}v}{2}$. \label{it:v contr M Cliff}
		\SELFL{$M \rcontr v = \frac{Mv - v\hat{M}}{2}$	\\ $v \wedge M = \frac{vM + \hat{M}v}{2}$		\\ Hestenes1984 p.8 (1.27)}
		\item $v \lcontr (MN) = (v\lcontr M)N + \hat{M}(v\lcontr N)$. \label{it:v contr MN}
		\item $v \lcontr (MN) = (v \wedge M)N - \hat{M}(v \wedge N)$.\label{it:v contr MN 2}
		\item $v \wedge (MN) = (v\lcontr M)N + \hat{M}(v\wedge N)$. \label{it:v wedge MN}
		\CITE{Hestenes1984 p.12 (1.41)}
		\item $v \wedge (MN) = (v \wedge M)N - \hat{M}(v \lcontr N)$. \label{it:v wedge MN 2}
		\item $v \lcontr (w_1 w_2 \cdots w_q) = \sum_{i=1}^q  (-1)^{i-1} \inner{v,w_i} w_1 w_2 \cdots w_i' \cdots w_q$,
		where $w_i'$ means $w_i$ is absent. \label{it:v contr ws Cliff}%
		\SELF{Equivalent to \Cref{pr:main tools}\ref{it:v lcontr ws}, as linearity lets us assume $v$, $w_i$'s are elements of an orthogonal basis. Some care is needed if there are repeated elements.}\CITEL{\cite[p.\,9]{Hestenes1984} uses $vw_i = 2\inner{v,w_i} - w_i v$ to move $v$ across $M=w_1w_2\cdots w_q$ and find that $vM - \hat{M}v$ is twice this sum}
		\item $(vM) \lcontr N = (v\lcontr M) \lcontr N + M \lcontr (v \lcontr N)$. \label{it:vM contr N}
	\end{enumerate}
\end{proposition}
\begin{proof}
	\emph{(\ref{it:vM})} Assume $v=v_1$ and $M=v_\ii$ for an orthonormal basis $(v_1,\ldots,v_n)$ and $\ii\in \II^n$.
	If $1\in \ii$ then $v_1 \wedge v_\ii=0$ and $v_1 v_\ii = v_{\ii\backslash 1} = v_1 \lcontr v_\ii$.
	If $1 \notin \ii$, $v_1 \lcontr v_\ii=0$ and $v_1 v_\ii = v_1 \wedge v_\ii$.
	\emph{(\ref{it:v contr M Cliff})} $\hat{M}v = \hat{M} \rcontr v + \hat{M} \wedge v = -v \lcontr M + v \wedge M$.
	\emph{(\ref{it:v contr MN})} $v \lcontr (MN) = \frac{vMN - \hat{M}\hat{N}v}{2} = \frac{vM - \hat{M}v}{2} N + \hat{M} \frac{vN - \hat{N}v}{2}$.
	\emph{(\ref{it:v contr MN 2}--\ref{it:v wedge MN 2})} Similar.
	\emph{(\ref{it:v contr ws Cliff})} Follows from \ref{it:v contr MN} by induction.
	\emph{(\ref{it:vM contr N})} Follows from \ref{it:vM}.
\end{proof}

By \ref{it:v contr MN}, $v\lcontr$ is a graded derivation \wrt the Clifford product as well.
With some rearrangements, 
	\OMIT{and relabeling $\tilde{M}\hat{\,} \mapsto M$}
\ref{it:v contr MN} and \ref{it:v wedge MN} are adjoint formulas,
while \ref{it:v contr MN 2} and \ref{it:v wedge MN 2} are self-adjoint.
Since \ref{it:vM contr N} is not as simple as \Cref{pr:main tools}\ref{it:wedge contractor},
there is no easy formula like \Cref{pr:v1...vk contr M} for when the contractor is a versor. 
We find
$(uvw)\lcontr M = \inner{u,v} w \lcontr M - \inner{u,w} v \lcontr M + \inner{v,w} u \lcontr M + w \lcontr \big(v\lcontr (u\lcontr M)\big)$ for $u,v,w\in X$, but with more vectors it becomes increasingly more complex.%
\SELFR{$(v_1 v_2 v_3 v_4) \lcontr M  
	= \sum_{\ii\in\II_2^4}$ \\ $ \epsilon_{\ii\ii'} \bigl( \inner{v_{i_1},v_{i_2}} \inner{v_{i_1'},v_{i_2'}} M$\\
	$+ \inner{v_{i_1},v_{i_2}} v_{i_2'} \lcontr (v_{i_1'} \lcontr M) \bigr)$ \\
	$+ v_4 \lcontr (v_3 \lcontr (v_2 \lcontr ( v_1 \lcontr M)))$}

\begin{corollary}\label{pr:complet orth Leibniz Cliff}
	Let $M\in \bigwedge V$ and $N\in\bigwedge W$.
	\begin{enumerate}[i)]
		\item If $L\in\bigwedge (W^\perp)$ then $L\lcontr(M N) = (L\lcontr M) N$. \label{it:contr L cperp N Cliff}
		\item If $H\in\bigwedge^p (V^\perp)$ then $H\lcontr(M N) =  \hhat{M}{p} (H\lcontr N)$. \label{it:contr H cperp M Cliff}
	\end{enumerate}
\end{corollary}
\begin{proof}
	\emph{(\ref{it:contr L cperp N})} Linearity and \Cref{pr:v1...vk contr M} let us assume $L=v \in W^\perp$,
	\OMIT{trivial if $L$ is scalar}
	in which case it follows from \Cref{pr:contr geom}\ref{it:v contr MN}.
	\OMIT{\ref{pr:v contr M = 0}}
	\emph{(\ref{it:contr H cperp M})} Similar.
\end{proof}

Likewise, $L\wedge (M N) = (L\wedge M) N$ and $H\wedge(M N) =  \hhat{M}{p} (H\wedge N)$.

\begin{corollary}\label{pr: v contr MN = 0}
	\SELF{Uso em \cite{Mandolesi_Contractions2}}
	For $v\in X$ and nonzero $M\in\bigwedge V$ and $N\in\bigwedge W$ in disjoint subspaces $V$ and $W$, $v\lcontr(MN) = 0 \Leftrightarrow v\lcontr M = v\lcontr N = 0$.
\end{corollary}
\begin{proof}
	As in \Cref{pr: v contr M wedge N = 0}, but using \Cref{pr:contr geom}\ref{it:v contr MN}.
\end{proof}

\begin{theorem}\label{pr:generalized Leibniz Cliff}
	For $v_1,\ldots,v_p\in X$ and $M,N\in\bigwedge X$, 
		\SELF{\ref{pr:contr geom}\ref{it:v wedge MN 2} dá $	v_{1\cdots p} \wedge (M N) = \sum_\ii \epsilon_{\ii\ii'} (v_{\ii} \wedge \hhat{M}{|\ii'|}) (\hhat{N}{|\ii'|} \rcontr \tilde{v}_{\ii'})$ (checar)}
		\SELF{Adjoints: \\ $M (v_{1\cdots p} \wedge N) = \sum_{\ii\in\II^p} \epsilon_{\ii\ii'}\, v_{\ii}\wedge \bigl((\hhat{M}{|\ii|} \rcontr \tilde{v}_{\ii'}) N \bigr)$ \\ $	M (v_{1\cdots p} \lcontr N) = \sum_{\ii\in\II^p} \epsilon_{\ii\ii'}\, v_{\ii'} \lcontr \bigl((\hhat{M}{|\ii'|} \rcontr v_{\ii}) N \bigr)$ \\ Proof: for $L \in \bigwedge X$, $\inner{L,M (v_{1\cdots p} \wedge N)} = \inner{v_{1\cdots p} \lcontr (\tilde{M}L),N}	= \sum_{\ii\in\II^p} \epsilon_{\ii\ii'}\, \inner{(v_{\ii'}\lcontr \hhat{\tilde{M}}{|\ii|})(v_\ii\lcontr L),N}	= \sum_{\ii\in\II^p} \epsilon_{\ii\ii'}\, \inner{L,v_\ii \wedge \bigl( (v_{\ii'}\lcontr \hhat{\tilde{M}}{|\ii|})\tilde{\,}\, N\bigr)}$}
	\begin{align}
		v_{1\cdots p}\lcontr(M N) &= \sum_{\ii\in\II^p} \epsilon_{\ii\ii'} (v_{\ii'}\lcontr \hhat{M}{|\ii|})(v_\ii\lcontr N),\text{ and} \label{eq:Leibniz generalizada Cliff}\\
		%
		%
		v_{1\cdots p} \wedge (M N) &= \sum_{\ii\in\II^p} \epsilon_{\ii\ii'} (\tilde{v}_{\ii}\lcontr \hhat{M}{|\ii'|}) (v_{\ii'} \wedge N). \label{eq:v1p wedge M N} 
		%
	\end{align}
\end{theorem}
\begin{proof}
	The proof of \eqref{eq:Leibniz generalizada Cliff} is like that of \eqref{eq:Leibniz generalizada}, but with \Cref{pr:contr geom}\ref{it:v contr MN} and Clifford products instead of $\wedge$.
		\OMIT{except inside the $v_\ii$'s, which remain blades}
%
	\Cref{pr:contr geom}\ref{it:v wedge MN} 
	gives \eqref{eq:v1p wedge M N} for $p=1$ and, assuming it for $v_{1\cdots (p-1)}$, also
	$v_{1\cdots p} \wedge (M N)
	= v_p \wedge \hat{v}_{1\cdots(p-1)} \wedge (MN)
	= v_p \wedge \sum_{\ii\in\II^{p-1}} \epsilon_{\ii\ii'} (\hat{\tilde{v}}_{\ii}\lcontr \hhat{M}{|\ii'|}) (\hat{v}_{\ii'} \wedge N)
	= I + II$,
	where
	\begin{align*}
		I &= \sum_{\ii\in\II^{p-1}} \epsilon_{\ii\ii'} \bigl(v_p \lcontr (\hat{\tilde{v}}_{\ii}\lcontr \hhat{M}{|\ii'|})\bigr) (\hat{v}_{\ii'} \wedge N), \text{ and} \\
		II &= \sum_{\ii\in\II^{p-1}} \epsilon_{\ii\ii'} (\hat{\tilde{v}}_{\ii}\lcontr \hhat{M}{|\ii'|})\hat{\,} \, (v_p \wedge \hat{v}_{\ii'} \wedge N),
	\end{align*}
	which are developed as before, using $\hat{\tilde{v}}_\ii \wedge v_p = v_p \wedge \tilde{v}_\ii = \tilde{v}_{\ii p}$.
\end{proof}

\section{Star duality}\label{sc:Star Operators and Duals}

Hodge-like star operators can be defined by contraction with an orientation element (also called a \emph{unit pseudoscalar} or \emph{volume element}).

\begin{definition}\label{df:star duals}
An \emph{orientation} of $X$ is a unit $\Omega\in\bigwedge^n X$, for $n=\dim X$.
It gives \emph{left and right star operators} $\star:\bigwedge X\rightarrow\bigwedge X$, and \emph{left and right duals} of $M\in\bigwedge X$, respectively, via $\lH{\!M} =  \Omega\rcontr M$ and $\rH{M} = M\lcontr \Omega$.
\end{definition}

When we use $\star$, it is implicit an $\Omega$ was chosen.
If an orthonormal basis $(v_1,\ldots,v_n)$ is fixed, assume $\Omega=v_{1\cdots n}$.
A real space has 2 orientations $\pm\Omega$,
but a complex one has a continuum of them, the unit circle in $\bigwedge^n X$.
Our approach is unorthodox:
complex spaces are usually seen as canonically oriented \cite[p.\,25]{Huybrechts2004},
	\CITE{or non-orientable, as there is no choice cite[p.\,419]{Shaw1983}}
with the complex structure inducing a real orientation $\Omega_\R \in \bigwedge^{2n} X_\R$ in the underlying real space $X_\R$. 

We also write $\star_L$ and $\star_R$ for left and right stars.
Having both simplifies the algebra, and the mirror principle switches them.
	\SELF{In formulas holding for any $\Omega$, so we can relabel $\tilde{\Omega}\mapsto \Omega$}
Note that $\star_R$ uses a left contraction, and vice-versa ($\star$ is at the same side as $\Omega$).
By convention, $\star$ takes precedence over products: $\lH{M}\wedge \rH{N}$ means $(\lH{M})\wedge (\rH{N})$.

In the real case, our $\rH{M}$ and $\lH{M}$ correspond, respectively, to 
the Hodge dual $*M$ and $*^{-1} M$ \cite{Abraham1988,Huybrechts2004},
	\CITE{Griffiths1994 tem erros} 
to $M*$ and $*M$ in \cite{Rosen2019}, 
	\CITE{$*M$, $*'M$ in [p.415]{Shaw1983}. Poincaré isomorphisms [p.354] $\perp\! \alpha$, $\perp'\! M$ are similar, but map forms to multivectors, viceversa}
and, up to signs (see \Cref{sc:geometric algebra}), to the undual $M^{-*}$ and dual $M^*$ of GA \cite{Dorst2007}.
	\SELF{$M^*=M\glcontr I^{-1}$ \\ $M^{-*}=M\glcontr I$}
In the complex case, our stars are conjugate-linear, simpler than the Hodge star of complex analysis%
\footnote{A $\C$-linear (sometimes conjugate-linear, and denoted  
by $\bar{*}$) extension of the real Hodge star of the dual space $X_\R'$ of $X_\R$ to its complexification $X_\R' \otimes_\R \C$, relating $\C$-valued $\R$-linear $p$- and $(2n-p)$-forms \cite[pp.\,156--159]{Wells1980}.}
\cite{Wells1980},
and better suited for complex geometry, relating blades of orthogonal
	\SELF{\wrt Hermitian product}
complex subspaces.

\begin{proposition}\label{pr:Hodge geometric charac}
	For a $p$-blade $B\neq 0$, $\rH{B}$ is the unique $(n-p)$-blade such that $[\rH{B}] = [B]^\perp$, $\|\rH{B}\| = \|B\|$ and $B\wedge \rH{B}$ has the orientation of $\Omega$.
\end{proposition}
\begin{proof}
	Follows from \Cref{pr:characterization 1}.
\end{proof}

Likewise for $\lH{B}$, except that $\lH{B}\wedge B$ has the orientation of $\Omega$.
Note that $[\rH{0}] = [0] \neq [0]^\perp$.

\begin{proposition}\label{pr:star basis}
	$\rH{v_\ii} = \epsilon_{\ii\ii'}  v_{\ii'}$ and $\lH{v_\ii} = \epsilon_{\ii'\ii}  v_{\ii'}$
	for $\ii\in\II^n$, an orthonormal basis $(v_1,\ldots,v_n)$, and orientation $v_{1\cdots n}$.
		\SELF{$\rH{1} = \lH{1 = \Omega}$ and $\rH{\Omega} = \lH{\Omega} = 1$}
\end{proposition}
\begin{proof}
	Follows from \eqref{eq:contr induced basis}.
\end{proof}


\begin{example}
	Let $(v_1,\ldots,v_4)$ be an orthonormal basis of $\C^4$, 
	and $B=3v_1+\im v_3+v_4$.
	Then
	$\rH{B} = 3\rH{v_1\!}-\im\rH{v_3\!}+ \rH{v_4\!} = 3v_{234}-\im v_{124} - v_{123} = v_2 \wedge (v_1-3 v_4) \wedge (v_3 + \im v_4)$,
	and so $[B]^\perp = [v_2]\oplus[v_1-3v_4] \oplus [v_3 + \im v_4]$. 
	Also, $B\wedge \rH{B} = 11\Omega$.
		\SELF{$=\|B\|^2\Omega$.}	
\end{example}

\begin{proposition}\label{pr:Hodge properties}
	Let $M\in\bigwedge X$.
		\SELF{$\rH{\rH{M}}= \check{M}$}
	\begin{enumerate}[i)]
		\item $(\rH{\!M})\check{\,} =  \rH{\!(\check{M})}$. \label{it:Hodge involutions}
		\item $\lH{\!\rH{M}} = M$. \label{it:star inverses}
			\SELF{$\lH{(\rH{M})} = \rH{(\lH{\!M})} = M$}
		\item $\lH{\!M} = \rH{\check{M}}$. \label{it:left right star} 
	\end{enumerate}
\end{proposition}
\begin{proof}
	    \OMITL{\ref{it:star inverses} usa \ref{pr:star basis} }
	\emph{(\ref{it:Hodge involutions})} $(\rH{M})\check{\,} = (M\lcontr\,\Omega)\check{\,} = \check{M}\lcontr \check{\Omega} = \check{M}\lcontr \Omega = \rH{(\check{M})}$.
		\SELF{$M^*=\lambda\rH{M}$ if $*$ is star for $\lambda\,\Omega$, with $\lambda$ a unit scalar}
	\emph{(\ref{it:star inverses})} Follows from \Cref{pr:P B}.
	\emph{(\ref{it:left right star})} Follows from \Cref{pr:contr homog}\ref{it:contr left right M}.
\end{proof}

This shows $\star_L$ and $\star_R$ are inverses,
\SELF{so $\bigwedge^{n-1}$ has only blades}
and lets us (re)position $\star$ as needed.
As $\rH{\rH{M}}= \check{M}$, if $n$ is odd then $\star_L=\star_R$ is an involution of $\bigwedge X$.
If $n$ is even this holds in $\bigwedge^+ X$.
	\SELF{$\check{M} = M$ if $n$ odd or $M$ even, nesse caso precisa $n$ even pra não sair de $\bigwedge^+ X$}
While $*$ and $\check{\ }$ commute, 
$(\rH{\!M})\hat{\,} = (\hat{M}) {}^{\hat{\star}} = (-1)^n\, \rH{\!(\hat{M})}$ 
and 
$(\rH{\!M})\tilde{\,} = \tensor[^{\tilde{\star}}]{}{} (\tilde{M}) = (-1)^{\frac{n(n-1)}{2}}\, \lH{(\tilde{M})}$, 
with stars $\hat{\star}$ and $\tilde{\star}$ for $\hat{\Omega}$ and $\tilde{\Omega}$.
	\SELF{As $\tilde{\,}$ turns a right $\star$-dual into a left $\tilde{\star}$-dual, and vice-versa, relabeling $\tilde{\star} \mapsto \star$ we find that the mirror principle holds switching left and right stars. \\
	Cliff conj:  $(\rH{\!M})\bar{\,} = \tensor[^{\bar{\star}}]{}{} (\bar{M}) = (-1)^{\frac{n(n+1)}{2}}\, \lH{(\bar{M})}$}

\begin{proposition}\label{pr:Hodge wedge contr}
	Let $M,N\in\bigwedge X$.%
		\CITEL{GA duality 		
		\cite[p.\,82]{Dorst2007}, \\
		$(A\wedge B)^* = A\glcontr(B^*)$, \\
		$(A\glcontr B)^* = A\wedge (B^*)$. \\
		{Dorst2007} (p. 82, 79, 594) proves 2nd via \ref{pr:triple subblade}\ref{it:MAB}. Hestenes uses associat of geometr prod}
		\SELF{$\lH{(M\wedge N)} = \, \lH{N}\rcontr M$, \\ 
		$\lH{(M\lcontr N)} = \, \lH{N}\wedge M$,\\
		$\lH{(M\rcontr N)} = \check{N}\wedge\!\lH{M}$,\\ 
		$\rH{(M\lcontr N)} = \rH{N}\!\wedge\check{M}$, \\[3pt] 
		$\lH{M} \lcontr \lH{N} = M\rcontr N$,\\ 
		$\rH{M}\rcontr\rH{N} = M\lcontr N$,\\
		$\rH{M}\lcontr N = \check{M} \rcontr \rH{N}$,\\ 
		$\rH{M}\lcontr\rH{N} = \check{M}\rcontr\check{N}$, \\[3pt]
		$\inner{\lH{M},\,\!\lH{N}} = \inner{N,M}$}
	\begin{enumerate}[i)]
		\item $\rH{(M\wedge N)} = N\lcontr\rH{M}$ and $\rH{(M\rcontr N)} = N\wedge\rH{M}$. \label{it:star wedge contr}
		\item $\rH{M} \rcontr N = M \lcontr \lH{N}$ and $\lH{M} \lcontr N = M \rcontr \rH{N}$. \label{it:star lcontr rcontr}
		\item $\inner{\rH{M},\rH{N}} = \inner{N,M}$. \label{it:star anti-unitary} 
	\end{enumerate}
\end{proposition}
\begin{proof}
		\OMIT{\ref{it:star wedge contr} usa \ref{pr:Hodge properties}\ref{it:star inverses} (ou \ref{pr:triple subblade}\ref{it:MAB} with $B=\Omega$, linearity on $A$)\\
		\ref{it:star lcontr rcontr} usa triple prod, \ref{pr:Hodge properties}\ref{it:star inverses}  \\
		\ref{it:star anti-unitary} usa \ref{it:star lcontr rcontr} }
	\emph{(\ref{it:star wedge contr})} $(M\wedge N)\lcontr\Omega = N\lcontr(M\lcontr\Omega)$, and, with its mirror, $\rH{(M\rcontr N)} = \rH{(\lH{\rH{M}}\rcontr N)} = \lH{\rH{(N\wedge \rH{M})}} = N\wedge \rH{M}$.
	\emph{(\ref{it:star lcontr rcontr})} $(M\lcontr \Omega)\rcontr N = M\lcontr (\Omega\rcontr N)$,
	and $\lH{M} \lcontr N = \lH{M} \lcontr \lH{\!\rH{N}} = \lH{\!\rH{M}} \rcontr \rH{N} = M \rcontr \rH{N}$.
	%
	%
	\emph{(\ref{it:star anti-unitary})} As elements of distinct grades are orthogonal, we can assume $M,N\in\bigwedge^p X$, so that $\inner{N,M} = N \lcontr M = N \lcontr \lH{\!\rH{M}} = \rH{N}\rcontr\rH{M} = \inner{\rH{M},\,\!\rH{N}}$.
\end{proof}


So, $\star$ can turn $\wedge$ into $\lcontr$ or $\rcontr$ and vice-versa, or switch $\lcontr$ and $\rcontr$, but one must heed the sides.
Other formulas are obtained via mirror principle and \Cref{pr:Hodge properties}, like 
$\rH{(M\lcontr N)} = \rH{N}\!\wedge\check{M}$ and
$\rH{M}\rcontr\rH{N} = M\lcontr N$.
The complex case can help identify errors: e.g. $\rH{(M \lcontr N)} \neq N\wedge\rH{M}$, as the first one is linear in $M$, and the other is conjugate-linear.
The reordering in \ref{it:star wedge contr} is again due to the adjoint nature of contractions. 
The corresponding formulas in GA \cite[p.\,82]{Dorst2007} avoid it, and use only the left contraction, but this simplicity comes at the cost of extraneous signs (see \Cref{sc:geometric algebra}).

By \ref{it:star wedge contr}, a subspace can be given by a blade $B$ or its dual $\rH{B}$, as the solution set of $v\wedge B=0$ or $v\lcontr \rH{B} = 0$ (like a plane given by a normal vector in $\R^3$).
	\CITE{Dorst2007 p.\,83}
In \cite{Mandolesi_Contractions2}, we study such equations for  $M\in \bigwedge X$.
By \ref{it:star anti-unitary}, stars are orthogonal operators (anti-unitary, in the complex case), giving isometries (anti-isometries, 
	\CITE{Topics in Math of QM, Hermann 1973 p.74, Google Books. Other authors use term for when metric signature is inverted}
in the complex case) of $\bigwedge^p X$ with $\bigwedge^{n-p} X$.
In the real case, $\star_L$ is the adjoint of $\star_R$
	\OMIT{$\inner{\lH{M},N} = \inner{\rH{N},M} = \inner{M,\rH{N}}$ \\ 
	Or orthog: inverse=adjoint}
(if $n$ is odd, $\star$ is self-adjoint).

\begin{corollary}\label{pr:norm star}
	$\|\rH{M}\| = \|M\|$, for $M\in\bigwedge X$.
\end{corollary}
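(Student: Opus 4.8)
The plan is to read the result off directly from the anti-unitary property already established in \Cref{pr:Hodge wedge contr}\ref{it:star anti-unitary}, namely $\inner{\rH{M},\rH{N}} = \inner{N,M}$, together with its mirror $\inner{\lH{M},\lH{N}} = \inner{N,M}$ recorded just after that proposition. Since these say that each star preserves the inner product up to the harmless swap $M\leftrightarrow N$, the statement about norms should fall out by setting the two arguments equal.

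First I would specialize to $N=M$. For the right star this gives $\inner{\rH{M},\rH{M}} = \inner{M,M}$, that is $\|\rH{M}\|^2 = \|M\|^2$, and since both sides are nonnegative reals we conclude $\|\rH{M}\| = \|M\|$. The same substitution in the mirror formula yields $\inner{\lH{M},\lH{M}} = \inner{M,M}$, hence $\|\lH{M}\|^2 = \|M\|^2$ and $\|\lH{M}\| = \|M\|$. This covers both equalities in the corollary.

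There is essentially no obstacle here: the only point deserving attention is the complex case, where one must confirm that the swap $M\leftrightarrow M$ does no harm under the sesquilinearity convention. But with the Hermitian product one still has $\inner{M,M} = \|M\|^2 \geq 0$ and $\inner{\rH{M},\rH{M}} = \|\rH{M}\|^2$, so equating them and taking square roots is legitimate in both the real and complex settings. All the genuine work—verifying that the stars are (anti-)unitary—was already carried out in the proof of \Cref{pr:Hodge wedge contr}\ref{it:star anti-unitary}, which itself rests on \ref{it:star contr 5} and \Cref{pr:contr homog}\ref{it:contr equal grades}; the present corollary is just the diagonal case of that isometry statement.
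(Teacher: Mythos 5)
Your proposal is correct and is exactly the paper's intended argument: the corollary is stated immediately after \Cref{pr:Hodge wedge contr} precisely because it is the diagonal case $N=M$ of \ref{it:star anti-unitary} and of its mirror $\inner{\lH{M},\lH{N}} = \inner{N,M}$, with the observation that $\inner{M,M}=\|M\|^2\geq 0$ handling both the real and complex settings. Nothing further is needed.
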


\begin{corollary}\label{pr:star wedge inner}
	$G\wedge\rH{H} = \inner{H,G}\, \Omega$, for $G,H \in\bigwedge^p X$. 
		\SELFL{$\lH{G}\wedge H = \inner{G,H}\, \Omega$ and $\inner{G,H} = \lH{(\lH{H}\wedge G)}$}
		\OMIT{$G\wedge \rH{H} = \rH{(H\rcontr G)} = \rH{\inner{G,H}} = \inner{H,G}\, \rH{1}$. \ref{pr:Hodge wedge contr}\ref{it:star wedge contr}}
\end{corollary}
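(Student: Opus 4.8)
The plan is to recognize this as a one-line consequence of the wedge--contraction duality established in \Cref{pr:Hodge wedge contr}, combined with the behavior of contractions in equal grade. The key observation is that the left-hand side $G\wedge\rH{H}$ has exactly the shape of the right-hand side of \Cref{pr:Hodge wedge contr}\ref{it:star contr}, namely $\rH{(M\rcontr N)} = N\wedge\rH{M}$. So first I would set $M=H$ and $N=G$ there to obtain $G\wedge\rH{H} = \rH{(H\rcontr G)}$, turning the evaluation of an exterior product into that of a single right dual.

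Next I would exploit that $G$ and $H$ share the grade $p$. By \Cref{pr:contr homog}\ref{it:contr equal grades}, a contraction of two homogeneous multivectors of equal grade collapses to their inner product, so $H\rcontr G = \inner{G,H}$, a scalar. Substituting gives $G\wedge\rH{H} = \rH{\inner{G,H}}$, and it only remains to compute the right dual of a scalar.

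Finally I would evaluate $\rH{\inner{G,H}}$ using the conjugate-linearity of the right star together with $\rH{1}=\Omega$ from \Cref{pr:Hodge properties}\ref{it: star 1} (equivalently, directly from the definition $\rH{\lambda} = \lambda\lcontr\Omega$ and \Cref{pr:contr homog}\ref{it:contr scalar}, which gives $\lambda\lcontr\Omega = \bar\lambda\,\Omega$). This yields $\rH{\inner{G,H}} = \overline{\inner{G,H}}\,\Omega$, and the Hermitian symmetry $\overline{\inner{G,H}} = \inner{H,G}$ produces the claimed $\inner{H,G}\,\Omega$.

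The computation is short, so the only real point of care --- and the likeliest source of a slip --- is the bookkeeping in the complex case: one must track the order of entries in $\inner{G,H}$ versus $\inner{H,G}$, remembering that the right star is conjugate-linear, so that the conjugation introduced by starring the scalar $\inner{G,H}$ is precisely what flips it to $\inner{H,G}$ and makes the formula reduce correctly to the real case.
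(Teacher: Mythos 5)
Your proposal is correct and follows exactly the paper's own (condensed) proof: apply \Cref{pr:Hodge wedge contr}\ref{it:star contr} to get $G\wedge\rH{H} = \rH{(H\rcontr G)}$, collapse the equal-grade contraction to $\inner{G,H}$ via \Cref{pr:contr homog}\ref{it:contr equal grades}, and evaluate the conjugate-linear star on that scalar using $\rH{1}=\Omega$, with the conjugation turning $\inner{G,H}$ into $\inner{H,G}$. Nothing to add.
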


Some authors use this to define stars, or to, given a star, determine an inner product by $\inner{G,H} = \rH{(G\wedge\rH{H})}$.
	\SELF{$=\rH{(\lH{\!H}\wedge G)}=\lH{\!(G\wedge\rH{H})}$}
	\OMIT{$\rH{(G\wedge\rH{H})} = \rH{(\inner{H,G}\, \Omega)} = \inner{G,H}\rH{\Omega}$, by \ref{pr:star wedge inner}}

\begin{example}
	In $\R^3$, the cross product is $u \times v = \rH{(u \wedge v)} = v \lcontr \rH{u}$.
	The usual triple product is $\inner{u,v\times w} = u \lcontr \rH{(v\wedge w)} = \rH{(u \wedge v\wedge w)}$, whose modulus is the volume of $u\wedge v\wedge w$.
	We also easily obtain 
	$(u\times v)\times w = w \lcontr \rH{(u\times v)} = w\lcontr (u\wedge v) = \inner{u,w}v-\inner{v,w}u$,
	and $\inner{u\times v,w \times y} = \inner{\rH{(u \wedge v)} , \rH{(w \wedge y)}} = \inner{u \wedge v , w \wedge y} = \inner{u,w} \inner{v,y} - \inner{u,y} \inner{v,w}$.
		\OMIT{\ref{pr:Hodge properties}, \ref{pr:Hodge wedge contr}\ref{it:star wedge contr},\ref{it:star anti-unitary}}
\end{example}

In the real case, stars and Clifford products are related as follows.

\begin{proposition}
	Let $v, v_1,\ldots,v_p \in X$ and $M,N \in \bigwedge X$.
		\SELF{Mirrors: \\
		$\lH{M} = \Omega \tilde{M}$ \\
		$\lH{(MN)} = \lH{N} \tilde{M}$ \\
		$\lH{(v_1 v_2 \cdots v_p)} = \lH{v_p} \, v_{p-1}\cdots v_1$ \\
		$M (\lH{N}) = (\rH{\tilde{M}}) \tilde{N}$ \\
		$\lH{(Mv)} = \lH{(M\rcontr v)} + \lH{v} \rcontr M$}
	\begin{enumerate}[i)]
		\item $\rH{M} = \tilde{M} \Omega$. \label{it:star Cliff}
		\item $\rH{(MN)} = \tilde{N} \rH{M}$. \label{it:star MN}
		\item $\rH{(v_1 v_2 \cdots v_p)} = v_p\cdots v_2 \,\rH{v_1\!\!}$. \label{it:star vvvvv}
		\item $(\rH{M}) N = \tilde{M} (\lH{\tilde{N}})$. \label{it:M star N}
		\item $\rH{(vM)} = \rH{(v\lcontr M)} + M \lcontr \rH{v}$. \label{it:star vM}
			\SELF{$=-\rH{M}\wedge v + M \lcontr \rH{v}$}
	\end{enumerate}
\end{proposition}
\begin{proof}
	\emph{(\ref{it:star Cliff})} If $M=v_\ii$ and $\Omega=v_{1\cdots n}$ for an orthonormal basis $(v_1,\ldots,v_n)$ and $\ii\in\II^n_q$
	then $\rH{v_\ii\!} = v_\ii \lcontr \Omega = \comp{\tilde{v_\ii} \Omega}{n-q} = \tilde{v_\ii} \Omega$,
	as $\tilde{v_\ii} \Omega = \pm v_{\ii \triangle 1\cdots n} = \pm v_{\ii'}$.
	%
	\emph{(\ref{it:star MN})} $\rH{(MN)} = (MN)\tilde{\,} \, \Omega = \tilde{N}\tilde{M}\Omega$.
		\OMIT{\ref{it:star Cliff}}
	\emph{(\ref{it:star vvvvv})} Follows from \ref{it:star MN}.
	\emph{(\ref{it:M star N})} Follows from \ref{it:star Cliff} and its mirror.
	\emph{(\ref{it:star vM})} Follows from Propositions \ref{pr:contr geom}\ref{it:vM} and \ref{pr:Hodge wedge contr}\ref{it:star wedge contr}.
\end{proof}

Stars with respect to oriented subspaces are also useful.

\begin{definition}\label{df:star duals wrt B}
	A unit $q$-blade $B\in\bigwedge X$ gives left and right stars $\star_B:\bigwedge X\rightarrow\bigwedge [B]$, and left and right duals of $M\in\bigwedge X$ \wrt $B$, respectively, by
	$\lHB{M} = B\rcontr M$ and $\rHB{M} = M\lcontr B$.
		\SELF{$\rHB{M} = \rH{M} \rcontr \rH{B}$}
\end{definition}

For a $p$-blade $A \neq 0$, $\rHB{A}$ is a $(q-p)$-blade, and $\rHB{A} \neq 0 \Leftrightarrow [A]\not\pperp[B]$, in which case $[\rHB{A}] = [A]^\perp \cap [B]$.
For $M\in\bigwedge X$,
$\rHB{\lHB{M}} = P_B M$\OMIT{\ref{pr:P B}}\CITE{[p.22]{Hestenes1984}}
and $\rHB{M} = \hhat{\lHB{M}}{(q+1)}$.
In $\bigwedge [B]$, $\star_B$ has properties as $\star$,
which with $\rHB{M} = \rHB{(P_B M)}$ 
	\CITE{[p.\,85]{Dorst2007}}
can be extended to $\bigwedge X$: \eg
$\inner{\rHB{M},\rHB{N}} = \inner{P_B N, P_B M}$.
	\SELF{$\rHB{\rHB{M}} = P_B \hhat{M}{(p+1)}$, \\ $\rHB{M} \rcontr \rHB{N} = P_B M \lcontr P_B N$. \\ NÃO vale $\rHB{(M\rcontr N)} = (P_B N)\wedge\rHB{(P_B M)}$. Ex: $M=N=v$ ortogonal a $B$}

\begin{proposition}\label{pr:starB}
	Let $B$ be a unit blade, and $M,N\in\bigwedge X$.
	\begin{enumerate}[i)]
		\item $\rH{M} = \rHB{M}\wedge \rH{B}$, if $M \in \bigwedge [B]$. \label{it:star_B 1}
		\item $\rHB{(M \wedge N)} = N \lcontr \rHB{M}$. \label{it:star_B wedge}
		\item $\rHB{(A\rcontr M)} = (P_A M)\wedge \rHB{A}$, if $A$ is a subblade of $B$. \label{it:star_B contr 2}
		\item $\rHB{(M\rcontr N)} = N\wedge\rHB{M}$, if $N \in \bigwedge [B]$. \label{it:star_B contr}
	\end{enumerate}
\end{proposition}
\begin{proof}
	\OMIT{(\ref{it:star_B 1}) \ref{pr:Hodge geometric charac}, \ref{pr:star wedge inner}, \ref{pr:complet orth Leibniz}\ref{it:contr L cperp N} \\ 
		(\ref{it:star_B wedge}) \ref{pr:main tools}\ref{it:wedge contractor} \\
		(\ref{it:star_B contr 2}) \ref{pr:triple subblade}\ref{it:AMB}  \\
		(\ref{it:star_B contr}) \ref{pr:triple subblade}\ref{it:MAB} } 
	\emph{(\ref{it:star_B 1})} $B\wedge\rH{B}= \Omega$, so $\rH{M} = M\lcontr(B\wedge\rH{B}) = (M\lcontr B)\wedge\rH{B}$, as $M \in \bigwedge ([\rH{B}]^\perp)$.
	%
	%
	\emph{(\ref{it:star_B wedge}--\ref{it:star_B contr})} Follow from Propositions \ref{pr:main tools}\ref{it:wedge contractor} and \ref{pr:triple subblade}.
\end{proof}

\subsection{Regressive product}\label{sc:Regressive Product}

Stars induce a regressive product dual to the exterior one.
These two are the basic products of Grassmann-Cayley algebra \cite{White2017}, in which, however, their symbols are usually swapped.
	\CITE{Dorst2007 p.136 diz que é álgebra não métrica. Significa que $\vee$ acaba não dependendo do inner prod?}

\begin{definition}\label{df:regressive}
	The \emph{regressive product} $M\vee N$ of $M,N\in\bigwedge X$ is given by $\rH{(M\vee N)} = \rH{M} \wedge \rH{N}$.
		\SELF{Grassmann combined $\wedge$ e $\vee$, dá um ou outro se $p+q>n$ ou $<n$:  ``Grassmann, Geometry and Mechanics'', Browne p.9}
\end{definition}

It is bilinear, associative,
and satisfies $G\vee H = (-1)^{(n-p)(n-q)} H\vee G \in \bigwedge^{p+q-n} X$ 
for $G\in\bigwedge^p X$, $H\in\bigwedge^q X$ and $n =\dim X$.
Also, $G\vee H=0$ if $p+q<n$.

\begin{proposition}\label{pr:regressive}
	Let $M,N\in\bigwedge X$.
		\SELF{$\lH{(M\vee N)} = \lH{M} \wedge \lH{N}$ \\ 
			$\lH{(M\wedge N)} = \lH{M} \vee \lH{N}$}
	\begin{enumerate}[i)]
		\item $(M\vee N)\check{\,} = \check{M} \vee \check{N}$. \label{it:regr check}
		\item $\rH{(M\wedge N)} = \rH{M} \vee \rH{N}$. \label{it:dual wedge}
		\item $M\vee N =  N \rcontr \rH{M}$. \label{it:regr contr}
			\SELF{$= \lH{N} \lcontr M$, mas $N \rcontr \rH{M}$ é melhor pra \ref{pr:characterization regressive}}
	\end{enumerate}
\end{proposition}
\begin{proof}
	\emph{(\ref{it:regr check})} Follows via \Cref{pr:Hodge properties}\ref{it:Hodge involutions}.
	\emph{(\ref{it:dual wedge})} $\rH{(M\wedge N)} = \rH{(\lH{\rH{M}} \wedge \lH{\rH{N}})} = \rH{\rH{(\lH{M} \vee \lH{N})}} = (\lH{M} \vee \lH{N})\check{\,} = \rH{M} \vee \rH{N}$.
	\emph{(\ref{it:regr contr})} 
	$M\vee N = \rH{(\lH{M} \wedge \lH{N})} = \lH{N} \lcontr \rH{\lH{M}} = \lH{N} \lcontr M =  N \rcontr \rH{M}$.
		\OMIT{\ref{pr:Hodge wedge contr}, \ref{pr:Hodge properties}\ref{it:star inverses} \\ \ref{it:dual wedge}, \ref{pr:Hodge wedge contr}\ref{it:star wedge contr}}
\end{proof}

Also, $(M\vee N)\hat{\,} = \hat{M} \hat{\vee} \hat{N} = (-1)^n \hat{M} \vee \hat{N}$
and $(M\vee N)\tilde{\,} = \tilde{N} \tilde{\vee} \tilde{M} = (-1)^{\frac{n(n-1)}{2}} \tilde{N} \vee \tilde{M}$, where $\hat{\vee}$ and $\tilde{\vee}$ are regressive products \wrt $\hat{\Omega}$ and $\tilde{\Omega}$.
Relabeling $\tilde{\vee}$ as $\vee$, the mirror principle holds.

\begin{proposition}\label{pr:regr induced basis}
	$v_{\ii} \vee v_{\jj} = \delta_{\ii\cup\jj = 1\cdots n} \, \epsilon_{\jj' \ii'} \, v_{\ii\cap\jj}$ for $\ii,\jj\in\II^n$, orthonormal basis $(v_1,\ldots,v_n)$, and orientation $v_{1\cdots n}$. 
\end{proposition}
\begin{proof} 
	$v_{\ii} \vee v_{\jj} = v_\jj \rcontr \rH{v_\ii\!} 
	= \epsilon_{\ii\ii'} \, v_{\jj} \rcontr v_{\ii'}$
	is $0$ 
	unless $\ii\cup\jj = 1\cdots n$ (so $\ii' \subset \jj$), in which case $\jj = \ord{(\ii \cap \jj)\ii'}$ and $v_{\ii} \vee v_{\jj} = \epsilon_{\jj'\ii'} \, v_{\ii \cap \jj}$, 
	as \Cref{pr:epsilon} gives
	$\epsilon_{\ii\ii'} \, \epsilon_{(\ii \cap \jj)\ii'}
	= \epsilon_{\ord{(\ii \cap \jj)\jj'}\, \ii'} \, \epsilon_{(\ii \cap \jj)\ii'}
	= \epsilon_{(\ii \cap \jj)\jj' \ii'} \, \epsilon_{(\ii \cap \jj) \jj'} \,\epsilon_{(\ii \cap \jj) \ii'}
	= \epsilon_{\jj'\ii'}$.
		\OMIT{\ref{pr:regressive}\ref{it:regr contr}, \ref{pr:star basis}, \eqref{eq:contr induced basis}, \ref{pr:epsilon}\ref{it:epsilon ord ra},\ref{it:epsilon r1 rm}}
\end{proof}

The following geometric characterization  dualizes the fact that, for nonzero blades, $A\wedge B\neq 0 \Leftrightarrow [A]\cap [B] = \{0\}$, in which case $[A\wedge B] = [A]\oplus[B]$,
and $\|A\wedge B\| = \|A\|\|B\|\cos\Theta_{[A],[B]^\perp}$ 
	\OMIT{$= \|(P_{[B]^\perp} A)\wedge B\| = \|P_{[B]^\perp} A\| \| B\|$}
(see \cite{Mandolesi_Products}).

\begin{theorem}\label{pr:characterization regressive}
	For nonzero blades $A\in\bigwedge^p X$ and $B\in\bigwedge^q X$, $A\vee B = 0 \Leftrightarrow [A]+[B] \neq X$.
	If $A\vee B \neq 0$, it is the only ($p+q-n$)-blade with:
	\begin{enumerate}[i)]
		\item $[A\vee B] = [A]\cap [B]$. \label{it:space regr}
		\item $\|A\vee B\| = \|A\|\|B\|\cos\Theta_{[A]^\perp,[B]}$. \label{it:norm regr} 
		\item $A \wedge \big((A \vee B)\lcontr B \big)$ has the orientation of $\Omega$.\label{it:orient regr}
			\SELF{or $\big(A \rcontr (A \vee B)\big) \wedge B$}
	\end{enumerate}
\end{theorem}
\begin{proof}\OMIT{\ref{pr:Hodge geometric charac}}
	$\rH{(A \vee B)} = \rH{A}\wedge \rH{B} = 0 \Leftrightarrow \{0\} \neq [\rH{A}] \cap [\rH{B}] = ([A]+[B])^\perp$.
	If $A\vee B = B \rcontr \rH{A} \neq 0$ then $[A\vee B] = [\rH{A}]^\perp \cap [B]$,
		\OMIT{\ref{pr:regressive}\ref{it:regr contr}, \ref{pr:characterization 1}\ref{it:space contr}}
	$\inner{\Omega,A \wedge ((A \vee B)\lcontr B)} = \inner{\rH{A}, B \rcontr \rH{A} \lcontr B} = \inner{\rH{A}, \|B\|^2 P_B(\rH{A})} = \|B\|^2 \|P_B(\rH{A})\|^2$,
	and $P_B(\rH{A}) \neq 0$.
	Also, 	$\|A\vee B\| = \|B \rcontr \rH{A}\| =  \|\rH{A}\|\|B\|\cos\Theta_{[\rH{A}],[B]}$.
\end{proof}

So, $A\vee B$ describes `necessary' intersections, that occur once the subspaces fill up $X$.
As $[A]+[B]\neq X \Leftrightarrow [A]^\perp \pperp [B] \Leftrightarrow \Theta_{[A]^\perp,[B]} = \frac\pi2$,
\ref{it:norm regr} holds for all blades.
In general, $\Theta_{V^\perp,W} \neq \Theta_{V,W^\perp} \neq \frac\pi2 - \Theta_{V,W}$, but $\Theta_{V^\perp,W} = \Theta_{W^\perp,V}$ and $\Theta_{V,W^\perp} = \Theta_{W,V^\perp}$ \cite{Mandolesi_Grassmann}.
If $C=A\vee B \neq 0$ then
$A = A' \wedge C$ and $B = C \wedge B'$ for $A'= \frac{A \rcontr C}{\|C\|^2}$ and $B' = \frac{C\lcontr B}{\|C\|^2}$,
and \ref{it:orient regr} means $A\wedge B' = A' \wedge C \wedge B' = A' \wedge B$ has the orientation of $\Omega$ (Fig.\,\ref{fig:regressive}).

\begin{figure}
	\centering
	\includegraphics[width=0.4\linewidth]{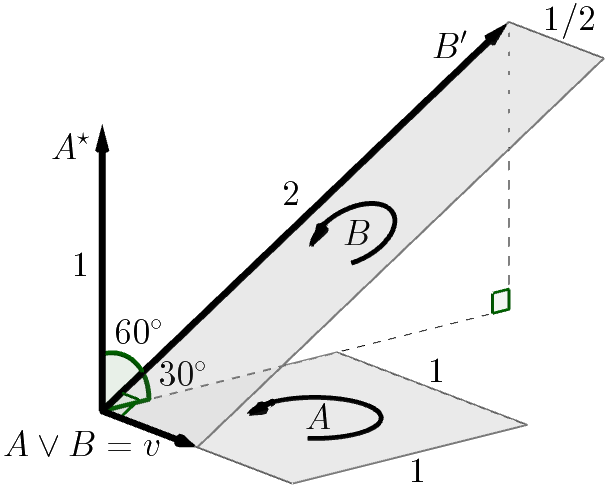}
	\caption{For unit blades $A,B \in \bigwedge^2 \R^3$ with $\Theta_{[A]^\perp,[B]} = 60^\circ$, $A\vee B$ is a vector $v$ of norm $\frac12$ in $[A] \cap [B]$, oriented so if $B=v \wedge B'$ then $A\wedge B'$ has the orientation of $\R^3$.}
	\label{fig:regressive}
\end{figure}

\begin{example}\label{ex:regr 1}
	If $A=v_{13}$ and 
	$B=(v_1+v_2)\wedge(v_3+\sqrt{3}v_4)$
		\OMIT{$(v_1,v_2)$ and $(w_1,w_2)$, with $w_1=\frac{v_1+v_3}{\sqrt{2}}$ and $w_2=\frac{v_2+\sqrt{3}v_4}{2}$, are principal bases of $V$ and $W$, with $\theta_1=45^\circ$ and $\theta_2=60^\circ$.}
	for an orthonormal basis $(v_1,\ldots,v_4)$ of $\R^4$	then $A\vee B = \sqrt{3} v_{13} \vee v_{24} = \sqrt{3} \, \epsilon_{1324} \, v_\emptyset = -\sqrt{3}$,
	so $V\cap W = \{0\}$ for $V=[A]$ and $W=[B]$. 
	Also, $A \wedge ((A \vee B)\lcontr B) 
	= 3 v_{1234}$,
	and areas of $V^\perp$ orthogonally projected on $W$ contract by $\cos \Theta_{V^\perp,W} = \frac{\sqrt{3}}{1\cdot 2\sqrt{2}} = \frac{\sqrt{6}}{4}$.
	Note that
	$\cos \Theta_{V,W^\perp} = \frac{\|A\wedge B\|}{\|A\|\|B\|} =  \frac{\sqrt{6}}{4}$
	($\Theta_{V,W^\perp} = \Theta_{W,V^\perp} = \Theta_{V^\perp,W}$ as $\dim V^\perp =\dim W$),
	but $\cos \Theta_{V,W} = \frac{|\inner{A,B}|}{\|A\|\|B\|} = \frac{\sqrt{2}}{4}$, and so $\Theta_{V,W} + \Theta_{V^\perp,W} \neq \frac\pi2$.
%
\end{example}

\begin{example}\label{ex:regr 3}
	If $A=v_1\wedge(v_2+v_4)\wedge(\im v_3+ v_4) = \im v_{123} + v_{124} - \im v_{134}$ and $B=v_{123}$
	for an orthonormal basis $(v_1,\ldots,v_4)$ of $\C^4$ 
	then $A\vee B = \epsilon_{43} \, v_{12}-\im \epsilon_{42} \, v_{13} = - v_{12} + \im v_{13}$,
	so $V\cap W = [v_1]\oplus[\im v_3- v_2]$ 
	for $V=[A]$ and $W=[B]$.
	Also, 
	$A = A' \wedge C$ for $C = A\vee B$ and
	$A' = \frac{A\rcontr C}{\|C\|^2} = - \frac{v_2 +2v_4+\im v_3}{2}$,
	$B=C\wedge B'$ for $B' = \frac{C\lcontr B}{\|C\|^2} = \frac{-v_3 + \im v_2}{2}$, 
	and $A\wedge B' = A'\wedge B = v_{1234}$.
	As $\cos \Theta_{V^\perp,W} = \frac{\sqrt{2}}{\sqrt{3}}$, areas of $V^\perp$ orthogonally projected on $W$ contract by $\frac{2}{3}$.
	As $\cos \Theta_{V,W} = \frac{1}{\sqrt{3}}$ and $\Theta_{V,W^\perp}=90^\circ$,
		\OMIT{$\inner{A,B} = 1$ and $A\wedge B = 0$}
	6-dimensional volumes of $V$ contract by $\frac{1}{3}$ if orthogonally projected on $W$, and vanish on $W^\perp$.%
		\SELF{As $\dim V^\perp = \dim [A'] = \dim [B'] =1$,	$\Theta_{V,W} + \Theta_{V^\perp,W} = \Theta_{[A'],[B']} + \Theta_{V^\perp, [B']} = \frac\pi2$.}
		\OMIT{$\Theta_{V^\perp,W} = \Theta_{V^\perp, [B']} = \Theta_{[B'],V^\perp} = \frac\pi2 - \Theta_{[B'],V}$, and $\Theta_{[B'],V} = \Theta_{W,V} = \Theta_{V,W}$ \\
		\ \\
		$V$ and $W$ are 3-dimensional complex subspaces of $\C^4$,
		$\dim (V\cap W) = 2$ and $\Theta_{V^\perp,W} = \cos^{-1} \sqrt{\frac23} \cong 35^\circ$,
		so $\theta_1=\theta_2=0$, $\theta_3\cong 55^\circ$}
\end{example}

\subsubsection{\join\ and \meet}\label{sc:join and meet}

The usefulness of $\vee$ for finding intersections is limited by the condition $[A]+[B] = X$, which is too restrictive for low grade blades in large spaces.
A workaround is to reduce the space to $[A]+[B]$, in which case another notation is used \cite{Dorst2007} 
	\CITE{Bouma2002}
(but note that some authors use the terms join and meet for $\wedge$ and $\vee$).

\begin{definition}
	The \join\ $A\cup B$ and \meet\ $A\cap B$ of blades $A$ and $B$ are defined, up to scalar multiples, as nonzero blades $J$ and $M$, respectively, such that $[J] = [A]+[B]$ and $[M] = [A]\cap [B]$.
\end{definition}

These operations are nonlinear and restricted to blades.
If a \join\ $J$ is known, a \meet\ can be obtained as in the regressive product, but with $\star_J$.
And given a \meet\ we can obtain a \join.

\begin{proposition}
	Let $A,B\in\bigwedge X$ be nonzero blades.
		\SELF{Start with a $J$, find \meet, find \join, and normalize, returns the same $J$.}
	\begin{enumerate}[i)]
		\item Given a unit \join\ $J$, a \meet\ is 
		$A\cap B = B \rcontr \rH[\star_J]{A}$.\label{it:meet from join}
			\SELF{$= \lH[\star_J]{B} \lcontr A$}
		\item Given a \meet\ $M$, a \join\ is $A\cup B = A \wedge (M \lcontr B)$.\label{it:join from meet}
			\SELF{$= (M\lcontr A)\wedge B$}
	\end{enumerate}
\end{proposition}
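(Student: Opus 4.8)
The plan is to prove each part by computing the span of the proposed blade and then invoking the definitions of \join\ and \meet, which fix these blades only up to nonzero scalars; so it suffices to check that each right-hand side is a nonzero blade with the correct span. For the spans I will lean on the contraction span formula of \Cref{pr:characterization 1}\ref{it:space contr}, and for part (i) on the regressive-product results of \Cref{sc:Regressive Product}, applied inside the reduced ambient space $[J]=[A]+[B]$.

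For part (i), first I would note that, as $J=A\cup B$ is a unit blade with $[J]=[A]+[B]$, both $A$ and $B$ lie in $\bigwedge[J]$ and $J$ is an orientation of $[J]$. Since contraction is intrinsic and maps $\bigwedge[J]$ into itself (\Cref{pr:contr homog}\ref{it:M contr N subset N}), and $\star_J$ is precisely the left star of $[J]$ oriented by $J$, the whole theory of \Cref{sc:Regressive Product} applies with $X$ replaced by $[J]$ and $\Omega$ by $J$. Writing $\vee_J$ for the regressive product determined by $\star_J$, \Cref{pr:regressive}\ref{it:regr contr} gives $\lH[\star_J]{B}\lcontr A=A\vee_J B$. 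As $[A]+[B]=[J]$ is the full reduced ambient space, \Cref{pr:regressive blades} (read in $[J]$) shows $A\vee_J B$ is a nonzero blade with $[A\vee_J B]=[A]\cap[B]$; hence $\lH[\star_J]{B}\lcontr A$ is a nonzero blade spanning $[A]\cap[B]$, a valid \meet.

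For part (ii), the argument is pure subspace bookkeeping. Since $[M]=[A]\cap[B]\subset[A]$ we have $M\not\pperp A$ and $\grade{M}\leq\grade{A}$, so $M\lcontr A\neq0$, and by \Cref{pr:characterization 1}\ref{it:space contr} it is a blade with $[M\lcontr A]=[M]^\perp\cap[A]$, a complement of $[M]$ inside $[A]$. Next I would compute the span of $(M\lcontr A)\wedge B$: the wedge is nonzero because $([M]^\perp\cap[A])\cap[B]=[M]^\perp\cap([A]\cap[B])=[M]^\perp\cap[M]=\{0\}$, whence $[(M\lcontr A)\wedge B]=([M]^\perp\cap[A])\oplus[B]$. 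Finally, using the orthogonal splitting $[A]=[M]\oplus([M]^\perp\cap[A])$ together with $[M]\subset[B]$, I obtain $([M]^\perp\cap[A])+[B]=[A]+[B]$. Therefore $(M\lcontr A)\wedge B$ is a nonzero blade spanning $[A]+[B]$, a valid \join.

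The hard part will be the conceptual step in part (i): justifying that the regressive-product identities may be reused verbatim inside $[J]$. The key observations are that contraction does not depend on whether it is computed in $\bigwedge[J]$ or $\bigwedge X$, and that $\star_J$ literally is the $[J]$-star for the orientation $J$; once these are stated, (i) follows at once. Part (ii) is routine, the only point needing care being the non-vanishing of the wedge, i.e.\ the trivial-intersection computation $([M]^\perp\cap[A])\cap[B]=\{0\}$.
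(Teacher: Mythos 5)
Your proposal is correct and takes essentially the same route as the paper: for (i) the paper's proof is precisely your argument---apply \Cref{pr:regressive}\ref{it:regr contr} (together with \Cref{pr:regressive blades}) with $X=[J]$ and $\Omega=J$---and your careful justification of the reduction to $[J]$ only spells out what the paper leaves implicit. For (ii) the paper factors $A=M\wedge A'$ and $B=M\wedge B'$ with $A',B'$ completely orthogonal to $M$ and computes $M\lcontr A=\|M\|^2 A'$, which is the same identification of $[M\lcontr A]$ as a complement of $[M]$ inside $[A]$ that you obtain from \Cref{pr:characterization 1}\ref{it:space contr}, followed by the same subspace bookkeeping to conclude $[(M\lcontr A)\wedge B]=[A]+[B]$.
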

\begin{proof}
	\emph{(\ref{it:meet from join})} 
	$[A]\cup[B]=[J]$ implies
		\OMIT{$[B] \not\pperp [J]$}
	$[\rH[\star_J]{A}] = [A]^\perp \cap [J] \not\pperp [B]$, so
	$[B \rcontr \rH[\star_J]{A}] = [\rH[\star_J]{A}]^\perp \cap [B] = ([A] + [J]^\perp) \cap [B] = [A] \cap [B]$.
		\OMIT{or \ref{pr:regressive}\ref{it:regr contr}, with $X=[J]$, and \ref{pr:characterization regressive}\ref{it:space regr}}
	\emph{(\ref{it:join from meet})} $B=M\wedge B'$
	 with $[B'] \perp [M]$,
	so $M\lcontr B = \|M\|^2 B'$ and
	$[A \wedge (M \lcontr B)] = [A \wedge B'] = [A]+[B]$.
\end{proof}

The \meet\ $A \cap B$ in \ref{it:meet from join} has properties like $A \vee B$, but considering the space as $[J]$:
e.g., $\|A \cap B\| = \|A\|\|B\| \cos \Theta_{[A]^\perp \cap [J],[B]}$.
Its bilinear formula remains valid if $A$ or $B$ changes but $[A]+[B]$ does not.

\begin{example}
	If $A=(v_1+v_4)\wedge(v_2+2v_4)$
		\OMIT{$= v_{12}+2v_{14}-v_{24}$}
	and $B=v_{12}$ for an orthonormal basis $(v_1,\ldots,v_4)$ of $\R^4$ then $A\vee B=0$ does not give $[A]\cap[B]$.
	As $[A]$ and $[B]$ are distinct planes in $[v_{124}]$, a unit \join\ is $J=v_{124}$, 
	a \meet\ is 
	$M = B \rcontr (A \lcontr J) = v_{12} \rcontr (v_4 - 2 v_2 - v_1) = -2v_1 + v_2$,
	and $[A]\cap[B] = [v_2-2v_1]$.
	Lengths in $[A]^\perp \cap [J] = [A \lcontr J] = [v_4 - 2v_2 - v_1]$ contract by $\cos \Theta_{[A]^\perp \cap [J],[B]} =  \frac{\sqrt{5}}{\sqrt{6}}$ if orthogonally projected on $[B]$.
\end{example}

\subsection{Outermorphisms and duality}

Let $T:X\rightarrow Y$ be a linear map into a Euclidean or Hermitian space $Y$ (same as $X$)
with orientation $\Omega_Y$ and star $*$ (beware: $\star$ is in $X$, $*$ in $Y$). 
We have $T\Omega \neq 0 \Leftrightarrow T$ is injective. 
If $T$ is invertible 
\SELF{vector space isomorphism}
then $T\Omega = \Delta_T \cdot \Omega_Y$ for a scalar $\Delta_T \neq 0$.
\SELF{depende também dos $\Omega$'s}
If $Y=X$ then $T\Omega = (\det T) \cdot \Omega$.%
\SELF{$T^{-1} M = \frac{1}{\det T} \lH{\big(\adj{T}(\rH{M})\big)}$
	as usual \cite[p.\,62]{Rosen2019}, \cite[p.113]{Dorst2007}}

\begin{proposition}\label{pr:T star geral}
	$T\big(\rH{(\adj{T} N)}\big) = N \lcontr T\Omega$, for $N\in\bigwedge Y$.
	If $T$ is injective then
	$T\big(\rH{(\adj{T} N)}\big) = \|T\Omega\|\, \rH[*_B]{N}$ with $B=\frac{T\Omega}{\|T\Omega\|}$, otherwise $T\big(\rH{(\adj{T} N)}\big) = 0$.
\end{proposition}
\begin{proof}
	Follows from \Cref{pr:T contr adj}.
\end{proof}

\begin{corollary}
	If $T$ is an isometry then $T(\rH{M}) = \rH[*_{T\Omega}]{(TM)}$, for $M \in \bigwedge X$.
	If it is also invertible then $T(\rH{M}) = \Delta_T \cdot \rH[*]{(TM)}$ and $|\Delta_T| = 1$.
\end{corollary}
\begin{proof}
	$M = \adj{T} N$ for $N = TM$, and $\|T\Omega\|=1$.
	\OMIT{\ref{pr:T star geral}}
\end{proof}

\begin{corollary}\label{pr:T star}
	If $T$ is invertible, the following diagram is commutative. 
	\SELF{Applied to blades, this gives the usual result that if $T$ is invertible and $T(V)=W$ then $\adj{T}(W^\perp) = V^\perp$.}
	Stars can be left or right, but must have equal (\resp opposite) sides for equal (\resp opposite) arrows.
	\SELFL{$\adj{T} \big(\rH[*]{(TM)}\big) = \vf{T} \rH{M}$ \\ $T\big(\rH{(\adj{T}N)}\big) = \vf{T} \rH[*]{N}$ \\ $\adj{T} \big(\rH[*]{N}\big) = \vf{T}\,\rH{(T^{-1}N)}$ \\ $\rH{(\adj{T}N)} = \vf{T}\,T^{-1}(\rH[*]{N})$ \\ If $T$ is orthogonal, $T(\rH{M}) = \rH[*]{(TM)}$}
	\begin{equation*}
		\begin{tikzcd}
			\bigwedge X \arrow[d,shift left=1,"\frac{T}{\Delta_T}"] \arrow[r,"\star"] & \bigwedge X \arrow[l] \arrow[d,shift left=1,"(\adj{T})^{-1}"] \\
			\bigwedge Y \arrow[u,shift left=1,"\Delta_T \cdot T^{-1}"] \arrow[r] & \bigwedge Y \arrow[u,shift left=1, "\adj{T}"] \arrow[l,"*"]
		\end{tikzcd}
	\end{equation*}
\end{corollary}
\begin{proof}
	$T\big(\rH{(\adj{T} N)}\big) = \Delta_T \cdot \rH[*]{N}$ for $N\in\bigwedge Y$, as above.
		\OMIT{\ref{pr:T contr adj}}
	Other relations follow from it, like $T(\rH{M}) = \Delta_T \cdot \rH[*]{\big((\adj{T})^{-1}M\big)}$ for $M = \adj{T} N\in \bigwedge X$.
	\OMIT{\ref{pr:T star geral}, \ref{pr:Hodge properties}\ref{it:star inverses}}
\end{proof}

\begin{corollary}\label{pr:inverse adjoint}
	$T^{-1} N = \frac{1}{\Delta_T}  \lH{\big(\adj{T}(\rH[*]{N})\big)}$ for $N\in\bigwedge Y$, if $T$ is invertible.%
		\OMIT{\ref{pr:T star}}\CITE{Pavan p.153: operador $T^\sharp M = \lH{T(\rH{M})}$, tal que $T^{-1} = \frac{1}{\vf{T}} (\adj{T})^\sharp$ \\ p.162: cofactor formula}	
\end{corollary}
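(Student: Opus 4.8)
The plan is to read off the result from the key identity established within the proof of \Cref{pr:T star}. Taking inner products there yields $\adj{T}\bigl(\rH[*]{(TM)}\bigr) = \vf{T}\,\rH{M}$ for every $M\in\bigwedge X$, where $*$ denotes the star of the induced orientation $\Omega_T$ on $\bigwedge Y$ and $\star$ the star of $\Omega$ on $\bigwedge X$. This single relation already encodes all the geometry, so what remains is a purely formal manipulation of the star operators.

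First I would specialize that identity by taking $M = T^{-1}N$ for the given $N\in\bigwedge Y$, which is legitimate since $T$ is invertible. Then $TM = N$, and the identity becomes
\[ \adj{T}\bigl(\rH[*]{N}\bigr) = \vf{T}\,\rH{(T^{-1}N)}. \]
Next I would apply the left star $\star_L$ to both sides. By \Cref{pr:Hodge properties}\ref{it:star inverses}, the left and right stars are mutual inverses, so $\lH{\bigl(\rH{(T^{-1}N)}\bigr)} = T^{-1}N$. This gives $\lH{\bigl(\adj{T}(\rH[*]{N})\bigr)} = \vf{T}\,T^{-1}N$, and dividing by $\vf{T}$ (nonzero, as $T$ is invertible) produces the claimed formula.

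The argument is short precisely because all the substantive work sits in \Cref{pr:T star}; the only point requiring vigilance is the bookkeeping of the star sides. One must apply the \emph{left} star to cancel the \emph{right} star appearing on the right-hand side, invoking $\lH{\rH{M}} = M$ rather than the opposite composition, and must keep the two stars $*$ and $\star$ distinct throughout. Beyond this notational care, no genuine obstacle arises.
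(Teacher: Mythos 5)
Your proof is correct and follows exactly the route the paper intends: the corollary is derived from the identity $\adj{T}\big(\rH[*]{N}\big) = \vf{T}\,\rH{(T^{-1}N)}$ of \Cref{pr:T star} (obtained by the substitution $M=T^{-1}N$, as you do), followed by cancelling the right star with the left star via \Cref{pr:Hodge properties}\ref{it:star inverses}. The only detail worth noting is that in the complex case the star is conjugate-linear, so pulling $\vf{T}$ out of $\lH{(\,\cdot\,)}$ uses that $\vf{T}=\|T\Omega\|$ is real and positive; with that observation your argument is complete.
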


\begin{example}
	If $T=\left(\begin{smallmatrix}
		0 & 1 & -1 \\
		2 & \im & 0 \\
		1 & 0 & -\im
	\end{smallmatrix}\right)$,
	$N=w_1+w_{13}$, $\Omega = v_{123}$ and $\Omega_Y = w_{123}$
	in orthonormal bases $(v_1,v_2,v_3)$ of $X$ and $(w_1,w_2,w_3)$ of $Y$
	then $\Delta_T=3\im$,
	$\rH[*]{N} = w_{23}-w_2$ 
	and
	$\adj{T}(\rH[*]{N}) = (2v_1-\im v_2)\wedge(v_1+\im v_3)-(2v_1-\im v_2)$,
	\OMIT{$\adj{T}=\left(\begin{smallmatrix}
			0 & 2 & 1 \\
			1 & 1 & 0 \\
			-1 & 0 & 2
		\end{smallmatrix}\right)$}
	so 
	$T^{-1}N = \frac{1}{3\im} \lH{(2\im v_{13} + \im v_{12} + v_{23} - 2v_1 + \im v_2)} = \frac{2 v_2 - v_3 - \im v_1 + 2\im v_{23} + v_{13}}{3}$. 
\end{example}

\begin{corollary}\label{pr:outer Y=X}
	Let $T:X \rightarrow X$ be linear, and $M\in\bigwedge X$.
	\begin{enumerate}[i)]
		\item $T\big(\rH{(\adj{T}M)}\big) = (\det T)\cdot \rH{M}$.
			\CITE{\cite[p.\,61, 2.7.2]{Rosen2019} similar result using dual spaces.}
			\SELF{For $M\neq 0$, $\adj{T}\big(\rH{(TM)}\big) \neq 0 \Leftrightarrow T$ is invertible.}
		\item $T(\rH{M}) = (\det T)\cdot\rH{\big((\adj{T})^{-1}M\big)}$ if $T \in GL(X)$. \label{it:T adj star}
		\item $T(\rH{M}) = (\det T)\cdot\rH{(TM)}$ if $T \in U(X)$.\label{it:O U}
		\item $T(\rH{M}) = \rH{(TM)}$ if $T \in SU(X)$.
	\end{enumerate}
\end{corollary}

In the real case, the general and special unitary groups $U(X)$ and $SU(X)$ become the orthogonal ones, $O(X)$ and $SO(X)$.

\begin{proposition}
	If $T:X \rightarrow Y$ is invertible and $M,N\in \bigwedge X$ then $T(M \vee N) = \frac{1}{\Delta_T} TM \vee TN$,
	where the second $\vee$ is \wrt $*$ (in $Y$).
\end{proposition}
\begin{proof}
		\OMIT{\ref{pr:T star}}
	$T(M \vee N) 
	= T\bigl(\lH{(\rH{M}\wedge\rH{N})}\bigr) 
	= \Delta_T \cdot \lH[*]{\bigl((\adj{T})^{-1}(\rH{M}\wedge\rH{N})\bigr)}
	= \Delta_T \cdot \lH[*]{\bigl((\adj{T})^{-1}(\rH{M})\wedge (\adj{T})^{-1}(\rH{N})\bigr)}
	= \Delta_T \cdot \lH[*]{\bigl(\rH[*]{(\frac{TM}{\Delta_T})}\wedge \rH[*]{(\frac{TN}{\Delta_T})}\bigr)} 
	= \frac{TM \vee TN}{\Delta_T}$.
\end{proof}

\begin{corollary}
	$T(M \vee N) = \frac{1}{\det T} TM \vee TN$, for $M,N\in\bigwedge X$ and $T\in GL(X)$.
\end{corollary}

\section*{Acknowledgments}

The author would like to thank Dr. Leo Dorst of the University of Amsterdam for his comments and suggestions.

%

\appendix

\section{The contractions zoo}\label{sc:Appendix}

The literature has many contractions (also called interior products, inner multiplications, inner derivatives, insertion operators, etc.), all playing similar roles, but with subtle differences which can be confusing.
Here we explain their differences and the reason for such diversity.

\subsection{Different contractions}

In this section, $X$ does not need an inner product at first, $X'$ is its dual space, and we use $A, B, C$ for any multivectors.

The simplest contraction is the pairing of $\varphi\in X'$ and $v\in X$, giving a scalar $\inner{\varphi,v} = \varphi(v)$.
	\SELF{or as $v(\varphi)$ (identifying $(X')'\cong X$)}
For general tensors \cite{Bourbaki1998}, contractions trace out selected pairs $(i,j_i)$ of covariant and contravariant indices, giving a product of pairings $\varphi_i(v_{j_i})$ and a lower order tensor.
For multivectors and forms (multi-covectors), alternativity requires adding the results of all correspondences of indices of one element with those of the other, choosing an initial one as positive, and changing sign for each index transposition.

Let $A=v_1\wedge\cdots\wedge v_p \in\bigwedge^p X$ and $\alpha = \varphi_1\wedge\cdots\wedge \varphi_q \in\bigwedge^q (X')$. 
If $p=q$, their contraction is a pairing $\inner{\alpha,A} = \alpha(v_1,\ldots,v_p) = \det\left(\varphi_i(v_j)\right)$,
	\SELF{also written $\inner{A,\alpha}$, $\alpha(A)$, $A(\alpha)$}
with $\varphi_1(v_1)\cdots \varphi_p(v_p)$ chosen as positive.
Setting $\inner{\alpha, A} = 0$ if $p \neq q$, and extending linearly, we have a nondegenerate pairing of $\bigwedge X$ and $\bigwedge (X')$, and an isomorphism $(\bigwedge X)' \cong \bigwedge (X')$.
Contractions differ from this pairing if $p\neq q$, giving, instead of a scalar, a lower grade multivector or form built with the $\varphi_i$'s or $v_j$'s left out of each index correspondence. 
Two choices for an initial positive correspondence give left or right contractions\footnote{This is convention I of \Cref{sc:Other Conventions}; II switches left and right contractions; and III matches last covectors with first vectors, or vice-versa.}: first covectors with first vectors, or last covectors with last vectors.

For $p=1$, contractions of $v\in X$ on $\alpha$ are $(q-1)$-forms.
	\CITE{Analysis, Manifolds and Physics, p.207, ou Abraham Marsden p.429, also called insertion operator or inner derivation}
The left one, 
$v \lcontr \alpha = \sum_{i=1}^q (-1)^{i-1} \varphi_1\wedge\cdots\wedge \varphi_i(v)\wedge\cdots\wedge \varphi_q$,
matches $v$ positively with $\varphi_1$,
and the right one,
$\alpha \rcontr v = \sum_{i=1}^q (-1)^{q-i}  \varphi_1\wedge\cdots\wedge \varphi_i(v) \wedge\cdots\wedge \varphi_q$,
with $\varphi_q$.
They are partial evaluations: for $u_1,\ldots,u_{q-1}\in X$, $v$ is inserted in the first entry of $\alpha$ in
$(v\lcontr \alpha)(u_1,\ldots,u_{q-1}) = \alpha(v,u_1,\ldots,u_{q-1})$,
or in the last one in
$(\alpha\rcontr v)(u_1,\ldots,u_{q-1}) = \alpha(u_1,\ldots,u_{q-1},v)$.
Equivalently, $\inner{v\lcontr\alpha,B} =\inner{\alpha,v\wedge B}$ and $\inner{\alpha\rcontr v,B} = \inner{\alpha,B\wedge v}$ for $B\in\bigwedge^{q-1}X$.
Likewise, for $q=1$, contractions of $\varphi\in X'$ on $A$ are $(p-1)$-vectors given by $\inner{\beta,\varphi\lcontr A} =  \inner{\varphi\wedge\beta,A}$ and $\inner{\beta,A\rcontr \varphi} = \inner{\beta\wedge\varphi,A}$ for $\beta\in\bigwedge^{p-1}X'$, 
so $\varphi$ is applied positively on $v_1$ for $\varphi\lcontr A$, or on $v_p$ for $ A\rcontr \varphi$.

Generalizing, 
we have four contractions,
$A\lcontr\alpha,\, \alpha\rcontr  A \in\bigwedge^{q-p} X'$ and $\alpha\lcontr  A,\, A\rcontr \alpha\in\bigwedge^{p-q} X$, given, for $B\in\bigwedge^{q-p}X$ and $\beta\in\bigwedge^{p-q}X'$, by
	\begin{align*}
		\inner{A\lcontr\alpha,B} &= \inner{\alpha, A\wedge B}, & \inner{\beta,\alpha\lcontr A} &= \inner{\alpha\wedge\beta,A}, \\ 
		\inner{\alpha\rcontr A,B} &= \inner{\alpha,B\wedge A}, & \inner{\beta,A\rcontr \alpha}&= \inner{\beta\wedge \alpha,A}.
	\end{align*}
They extend linearly for all $A\in\bigwedge X$ and $\alpha\in\bigwedge X'$.
Left (\resp right) contractions match the \emph{contractor} (the  element switching sides) positively with the first (\resp last) components of the \emph{contractee} (the other element).
The result is of the same kind (multivector or form) as the contractee, vanishing if the contractor has larger grade.

\begin{example}
	For $ A\in\bigwedge^3 X$ and $\alpha=\varphi_1\wedge \varphi_2\wedge \varphi_3\wedge \varphi_4\in\bigwedge^4 X'$, 
	we have $\alpha\lcontr A =  A\rcontr\alpha=0$, and
	\begin{align*}
		A\lcontr \alpha = & +\inner{\varphi_1\wedge \varphi_2\wedge \varphi_3, A}\, \varphi_4 - \inner{\varphi_1\wedge \varphi_2\wedge \varphi_4, A}\, \varphi_3 \\
		& +\inner{\varphi_1\wedge \varphi_3\wedge \varphi_4, A}\, \varphi_2 - \inner{\varphi_2\wedge \varphi_3\wedge \varphi_4, A}\, \varphi_1, \\
		\alpha\rcontr  A = & +\varphi_1\, \inner{\varphi_2\wedge \varphi_3\wedge \varphi_4, A} - \varphi_2\, \inner{\varphi_1\wedge \varphi_3\wedge \varphi_4, A}  \\
		& + \varphi_3\, \inner{\varphi_1\wedge \varphi_2\wedge \varphi_4, A} - \varphi_4\, \inner{\varphi_1\wedge \varphi_2\wedge \varphi_3, A}.
	\end{align*}
\end{example}

An inner/Hermitian product $\inner{\cdot,\cdot}$ in $X$ gives the musical isomorphism $\flat:X\rightarrow X'$, 
$v^\flat(w) = \inner{v,w}$ for $v,w\in X$, whose outermorphism 
	\CITE{[p.\,48]{VazJr2016}}
enables contractions $A\lcontr B = A^\flat \lcontr B$ and $B\rcontr A = B \rcontr A^\flat$ of multivectors $A,B\in\bigwedge X$.
	\SELF{the inverse musical isomorphism $\sharp:\bigwedge X'\rightarrow \bigwedge X$ gives contractions between forms, $\alpha \lcontr \beta = \alpha^\sharp \lcontr \beta$ and $\beta\rcontr\alpha = \beta \rcontr \alpha^\sharp$ for $\alpha,\beta\in\bigwedge X'$}
Though not so common outside of GA,
	\SELF{Possibly due to the need for an inner product. Or a symplectic form, but this does not seem to have been studied. Shaw1983 defines Hodge star in symplectic space, but not contraction}
they are simpler and have more direct geometric interpretations,
just as it is easier to work with inner product spaces than dual ones.
In the complex case, $\flat$ is conjugate-linear, so these contractions are sesquilinear, while those of multivectors with forms were bilinear.
This construction is equivalent to \Cref{df:contraction}:
we have $\inner{A^\flat,B} = \inner{A,B}$, where the first $\inner{\cdot,\cdot}$ is the pairing 
and the other is the inner product,
and so
$\inner{C, A \lcontr B} = \inner{C^\flat, A^\flat \lcontr B} = \inner{A^\flat \wedge C^\flat, B} = \inner{( A\wedge C)^\flat, B} = \inner{ A\wedge C, B}$ for $C\in\bigwedge X$.

\subsection{Different conventions}\label{sc:Other Conventions}

If having left and right contractions between different kinds of elements is not confusing enough, one must be aware of the various conventions.
To make matters worse, usually these are not clearly identified. 
For simplicity, here $A,B,C$ can be multivectors or forms.

Most authors use $\lcontr$ for the left contraction, with the lower side of the `hook' towards the contractor.
But in \cite{Reichardt1957} it is towards the contractee; 
in \cite{Marcus1975} the contractor is on the left of either $\lcontr$ or $\rcontr$;
\cite{Federer1969,Sternberg1964} use $\lcontr$ to contract a multivector on a form, and $\rcontr$ for the opposite (but their $\lcontr$'s differ).
	\CITE{In Sternberg1964 both are adjoints of left $\wedge$, and called right interior multiplications.
	In Federer1969 $\lcontr$ is adjoint of right $\wedge$}
In Differential Geometry \cite{Abraham1988,Kobayashi1996a}, 
contraction by a vector $v$ often appears as an operator $\mathrm{i}_v$ or $\iota_v$.
Other symbols used are $\llcontr$ \cite{Shaw1983}, $\glcontr$ \cite{Dorst2007}, $\dashv$ \cite{BayroCorrochano2018}, $\ominus$ \cite{Browne2012} and $:$ \cite{Pavan2017}.
We use $\lcontr$, $\llcontr$, $\glcontr$ to distinguish conventions I, II, III below, but this is not common practice.
	\CITE{Shaw1983 uses llcontr for I} 
In GA, some authors \cite{Dorst2007,Hitzer2012,VazJr2016} use $\glcontr$, but there is an effort to standardize $\lcontr$ (in our opinion, this is unfortunate, as $\glcontr$ helps identify their convention).

Definitions and properties differ as well.
\Cref{tab:conventions} shows how some properties of the left contraction vary in common conventions.
We use convention I \cite{Greub1978,Rosen2019,Shaw1983}.
In II, used by many authors \cite{Federer1969,Fulton1991,Gallier2020,Kozlov2000I,Lima2009},
sides are switched: their left contraction $A\llcontr B$ is our right one $B \rcontr A$, and vice-versa.
	\CITE{Sternberg1964 uses I for $\lcontr$, II for $\rcontr$}
In III, used in GA \cite{Dorst2007,Hitzer2012,Lounesto1993} and by Bourbaki \cite{Bourbaki1998}, there is a reversion $\tilde{\ }$ in the contractor: its $A\glcontr B$ is our $\tilde{A}\lcontr B$.

\begin{table}[]
	\centering
	\begin{tabular}{@{}lll@{}}
		\toprule
		I   & \makecell[l]{ (\emph{a}) \\ (\emph{b}) \\ (\emph{c})} & \makecell[l]{$\inner{A\lcontr B, C} = \inner{B, A\wedge C}$ \\ $(A\wedge B)\lcontr C = B\lcontr (A\lcontr C)$ \\ $v\lcontr (B\wedge C) = (v\lcontr B)\wedge C + (-1)^{\grade{B}} B\wedge (v\lcontr C)$}  \\ \midrule
		II  & \makecell[l]{ (\emph{a}) \\ (\emph{b}) \\ (\emph{c})}  & \makecell[l]{$\inner{A\llcontr B, C} = \inner{B, C\wedge A}$ \\ $(A\wedge B)\llcontr C = A\llcontr (B\llcontr C)$ \\ $v\llcontr (B\wedge C) = (-1)^{\grade{C}} (v\llcontr B)\wedge C +  B\wedge (v\llcontr C)$}  \\ \midrule
		III & \makecell[l]{ (\emph{a}) \\ (\emph{b}) \\ (\emph{c})}  & \makecell[l]{$\inner{A\glcontr B, C} = \inner{B, \tilde{A}\wedge C}$ \\ $(A\wedge B)\glcontr C = A\glcontr (B\glcontr C)$ \\ $v\glcontr (B\wedge C) = (v\glcontr B)\wedge C + (-1)^{\grade{B}} B\wedge (v\glcontr C)$}  \\ 
		\bottomrule
	\end{tabular}
	\caption{Properties of the left contraction in conventions I, II and III.}
\label{tab:conventions}
\end{table}

In I (\resp II), property (\emph{a}) means the left contraction by $A$ is the adjoint of the left (\resp right) exterior product by $A$.
In III, it is the adjoint of the left exterior product by the reversed $\tilde{A}$.

In II and III, (\emph{b}) shows $\bigwedge X$ is a left $(\bigwedge X)$-module \wrt the left contraction.
In I, it is a right module, as the order of $A$ and $B$ is reversed, but the notation does not make this evident.

In I and III, (\emph{c}) is a graded Leibniz rule, but in II the sign is at the `wrong' term. 
In I and III it may seem misplaced for the right contraction, 
$(B\wedge C)\rcontr v = (-1)^{\grade{C}} (B\rcontr v)\wedge C +  B\wedge (C\rcontr v)$,
but becomes natural if we think of $v$ as `coming from the right', as the notation suggests. 

Contractions I, II and III differ only by grade dependent signs, so which one to use is a matter of choice.
But fixing a standard one, preferably that with more intuitive formulas, would reduce the confusion.

We advocate for I. Some authors see the reordering of $A$ and $B$ in (\emph{b}) as a drawback, hiding the module structure. 
But thinking in terms of modules seems to bring little advantage here, while the reordering fits well with the nature of the contraction as an adjoint operator.

The weird `Leibniz rule' of II is for us a deal breaker. Many authors seem content with it, but its popularity may be an accident of history: according to \cite[p.\,112]{Gallier2020}, Bourbaki used II in the 1958 edition of \cite{Bourbaki1998}, which might explain its early dissemination.

Bourbaki's switch to III in the 1970 edition seems to have been ill-assimilated, and III only became popular with its use in GA.
The reversion%
\footnote{Bourbaki used instead a homomorphism into the opposite algebra.}
in (\emph{a}) enforces the left module structure while preserving the Leibniz rule,
but it makes orientations harder to interpret, as we discuss below.

\subsection{Geometric algebra contractions}\label{sc:geometric algebra}

The contractions of III were introduced in GA by Lounesto \cite{Lounesto1993},
	\CITE{p.221}
with the reversion $\tilde{\ }$ used ``to absorb some inconvenient signs'' \cite[p.\,134]{Dorst2001}.
But these force their way back, requiring more adjustments: 
e.g., our formula $A \lcontr B = \inner{A,B}$, for equal grades, becomes $A\glcontr B = A*B$, with $\tilde{\ }$ hidden in a \emph{scalar product} $A*B = \inner{\tilde{A},B}$;
then (\emph{a}) becomes $C*(A\glcontr B) = (C\wedge A)*B$ \cite[p.\,38]{Dorst2002}, with $A$ at the right side of $\wedge$; and so on.

Convention III serves another purpose in GA:
for $A\in\bigwedge^p X$ and $B\in\bigwedge^q X$, it lets $A\glcontr B = \comp{AB}{q-p}$ and $A\grcontr B = \comp{AB}{p-q}$ be components of the Clifford product $AB$, as are other GA products: $A*B = \comp{AB}{0}$ 
and $A\wedge B = \comp{AB}{p+q}$.
But $AB$ reflects the orientations of $\tilde{A}$ and $B$ \cite{Mandolesi_Products}, so to have results with orientations directly related to those of $A$ and $B$ one must often use $\tilde{A}B$:
e.g., $\|A\|^2 = \tilde{A}*A = \comp{\tilde{A}A}{0}$.
The exterior product is not affected by this  \cite[p.\,25]{Mandolesi_Products}, but contractions are.

Interpreting the orientation of $A\glcontr B =  (-1)^{\frac{p(p-1)}{2}} A\lcontr B$ is less immediate than in \Cref{pr:characterization 1}, taking some thought and knowledge of $p$.
For example, $(i\wedge j)\glcontr(i\wedge j\wedge k) = -k$ for the canonical basis of $\R^3$, an algebraically easy result, but with a sign whose meaning is not obvious.
The only interpretation for the orientation of $A\glcontr B$ we could find (\cite[p.\,76]{Dorst2007}, \cite[p.\,29]{dorst2002geometric}, \cite[p.\,45]{Lounesto2001})
	\CITE{figuras}
is for $p=1$, when $A\glcontr B = A \lcontr B$.

Another case of signs gone awry in GA is the \emph{dual} $A^*=A\glcontr \tilde{\Omega}$ (for a unit pseudoscalar $\Omega$ in Euclidean $\R^n$), 
which differs by $(-1)^\frac{p(p-1)+n(n-1)}{2}$ 
	\SELF{Corresponding to a reversion of both the multivector and the volume element}
from the usual Hodge dual: 
	\SELF{and our $\star_R$}
e.g., $i^*=-j\wedge k$ for $\Omega = i\wedge j\wedge k$ (strangely, a figure in \cite[p.\,82]{Dorst2007} presents this as the usual right-hand rule).

For beginners, these signs with no obvious meaning 
are an off-putting aspect of GA.
One soon learns to put some to good use: \eg identifying $\C$ with $\bigwedge^+ \R^2$, with imaginary unit $I=i\wedge j$, as $I^2 = -1$.
But most signs remain a nuisance:
$I*I = -1$ gives no new information, is less useful since $*$ is not as flexible as the Clifford product,
and to interpret its sign one must stop and think that $I*I = (-1)^{\frac{p(p-1)}{2}} \inner{I,I}$.
Such details, and the use, with altered meanings, of misleadingly familiar symbols and terms, might explain why GA is still not as widely used as it should be.

It seems the theory was thrown a little off-track by the idea of all products being components of $AB$, perhaps due to a sense of algebraic elegance trumping geometric interpretation. 
Its intuitiveness and familiarity might improve if instead of $A*B$ and $A\glcontr B$ we use $\inner{A,B}$ and $A\lcontr B$, 
which are components of $\tilde{A}B$.
This requires adapting some formulas (e.g., the projection $P_B  A = (A\glcontr B)\glcontr B^{-1}$ becomes $P_B A =\frac{B\rcontr A\lcontr B}{\|B\|^2}$),
but does not seem to cause a loss of computational power.
Lounesto \cite{Lounesto2001} uses $\inner{A,B}$, and
Dorst \cite[p.\,71]{Dorst2007} has suggested absorbing  $\tilde{\ }$ into $A*B$,
	\CITE{Chisolm2012 p.35 does define $A*B = \comp{\tilde{A} B}{0}$, so it equals $\inner{A,B}$}
but as they still use $A\glcontr B = \tilde{A}\lcontr B$, this half-way solution becomes less convenient.
Rosén \cite{Rosen2019} uses $\inner{A,B}$ and $A\lcontr B$, but with multi-covectors.

Another product that should be avoided is \emph{Hestenes inner product} \cite{Hestenes1984}, a symmetrized contraction\footnote{In \cite{Hestenes1984}, Hestenes defined $A\cdot B = 0$ if $A$ or $B$ were scalars, but later changed it \cite{Hestenes2005}.}: 
$A\cdot B = \comp{AB}{|q-p|} = A\glcontr B$ if $p\leq q$, $A\grcontr B$ otherwise.
The symmetry $A\cdot B = \pm B\cdot A$, for homogeneous elements, seems handy, but blurs the distinction between contractor and contractee.
One must compare grades to know the role of each term,
which affects how contractions operate with exterior products.
So, formulas with $\cdot$ often carry grade conditionals:
e.g., equations (1.25b) and (1.25c) in \cite[p.\,7]{Hestenes1984} 
	\CITE{manter pág., numeração de eqs. repete a cada capítulo}
give different results for $A \cdot (B \cdot C)$, depending on the grades (compare with our Propositions \ref{pr:main tools}\ref{it:wedge contractor} and \ref{pr:assoc lcontr rcontr}).
Also, the adjoint duality of formulas with $\lcontr$ and $\wedge$ is partially lost with $\cdot$ (e.g., compare (1.42) and (1.43) in \cite[p.\,12]{Hestenes1984} with Propositions \ref{pr:main tools}\ref{it:Leibniz vector grade inv} and \ref{pr:v wedge (M contr N)}, which have no grade restrictions).
Grade conditionals hamper the use of non-homogeneous elements (which are an intrinsic part of GA, arising from Clifford products), 
and force us to track grades
and analyze various grade dependent cases in proofs.
The asymmetry of contractions, which appropriately vanish when grade conditions are not satisfied, often lets us treat all cases at once, allowing simpler proofs for more general results (as can be seen throughout this work).
For more discussions of the advantages of contractions over Hestenes product, see \cite[pp.\,134--136]{Dorst2001}, \cite[pp.\,39,45]{Dorst2002}, \cite[pp.\,224--225]{Lounesto1993} or \cite[pp.\,22--24]{Mandolesi_Products}.


\providecommand{\bysame}{\leavevmode\hbox to3em{\hrulefill}\thinspace}
\providecommand{\MR}{\relax\ifhmode\unskip\space\fi MR }
\providecommand{\MRhref}[2]{%
	\href{http://www.ams.org/mathscinet-getitem?mr=#1}{#2}
}
\providecommand{\href}[2]{#2}

\end{document}